\def\Im{\mathop{\rm Im}\nolimits}
\def\Im{\mathop{\rm Im}\nolimits}
\def\R{\mathbb R}
\def\C{\mathbb C}
\def\N{\mathbb N}
\def\ds{\displaystyle}
\newcommand\dslash{d\llap {\raisebox{.9ex}{$\scriptstyle-\!$}}}
\newcommand{\beqsn}{\arraycolsep1.5pt\begin{eqnarray*}}
	\newcommand{\eeqsn}{\end{eqnarray*}\arraycolsep5pt}
\newcommand{\beqs}{\arraycolsep1.5pt\begin{eqnarray}}
	\newcommand{\eeqs}{\end{eqnarray}\arraycolsep5pt}
\newtheorem{Th}{Theorem}[section]
\newtheorem{Rem}[Th]{Remark}
\newtheorem{Lemma}[Th]{Lemma}
\newtheorem{Def}[Th]{Definition}
\newtheorem{Prop}[Th]{Proposition}
\renewcommand{\section}%
{\setcounter{equation}{0}\@startsection {section}{1}{\z@}{-3.5ex plus -1ex
		minus -.2ex}{2.3ex plus .2ex}{\Large\bf}}
\title[Schr\"odinger type equations with distributional coefficients]{Schwartz very weak solutions for Schr\"odinger type equations with distributional coefficients}
\author[Arias Junior]{Alexandre Arias Junior}
\address{Alexandre Arias Junior\\
	Department of Computer Science and Mathematics (FFCLRP), University of S\~ao Paulo (USP),
	Ribeir\~ao Preto, SP, 14040-901, Brazil
	}
	\email{alexandre.ariasjunior@usp.br}
\author[Ascanelli]{Alessia Ascanelli}
\address{Alessia Ascanelli\\
	Dipartimento di Matematica ed Informatica\\Universit\`a di Ferrara\\
	Via Machiavelli 30\\
	44121 Ferrara\\
	Italy}
\email{alessia.ascanelli@unife.it}
\author[Cappiello]{Marco Cappiello}
\address{Marco Cappiello\\
	Dipartimento di Matematica ``G. Peano" \\Universit\`a di Torino\\
	Via Carlo Alberto 10\\
	10123 Torino\\
	Italy}
\email{marco.cappiello@unito.it}
\author[Garetto]{Claudia Garetto}
\address{Claudia Garetto\\
	School of Mathematical Sciences\\Queen Mary University of London\\
Mile End Road, London E1 4NS\\ UK}
\email{c.garetto@qmul.ac.uk}
\subjclass[2020]{Primary 35J10; Secondary 35D99;}
\keywords{Schr\"odinger operator, very weak solutions, regularisation}
\begin{document}

\begin{abstract} 
	This paper continues the analysis of Schr\"odinger type equations with distributional coefficients initiated by the authors in \cite{AACG}. Here we consider coefficients that are tempered distributions with respect to the space variable and are continuous in time. We prove that the corresponding Cauchy problem, which in general cannot even be stated in the standard distributional setting, admits a Schwartz very weak solution which is unique modulo negligible perturbations. Consistency with the classical theory is proved in the case of regular coefficients and Schwartz Cauchy data.
\end{abstract}

\maketitle

\section{Introduction}
\noindent

Schr\"odinger equation is the fundamental equation in quantum mechanics, since it describes the evolution in time of the  wave function's state $u(t,x)$ of a quantic particle with mass $m$:
\beqs\label{sc}\hbar \partial_t u= -\ds\frac{\hbar^2}{2m}\triangle u+Vu, \ t\in[0,T],\ x\in\R^n,\eeqs
where $\hbar$ is the reduced Planck constant, V is a potential and, as usual $\triangle:=\ds\sum_{j=1}^n \partial_{x_j}^2$ is the Laplace operator. Literature on the topic is really wide, in particular in the case of a power potential $Vu=\lambda u|u|^{2k}$, $k>0,$ $\lambda \in \R$, and various questions of existence of a unique solution in Sobolev spaces have been studied, see for instance \cite{Kenig_SG} and the references therein. Also in the case of space and time depending coefficients the literature is extensive, not only for the Schr\"odinger equation itself but also for the so-called class of Schr\"odinger type equations with lower order terms $Su=f$, where 
\begin{equation}\label{city_of_tears}
S = D_t + \sum_{j=1}^{n}D^2_{x_j} + \sum_{j=1}^{n}c_j(t,x)D_{x_j} + c_0(t,x), \quad D=-i\partial,
\end{equation}
with at least an assumption of continuity with respect to time and an assumption of smoothness in space for the coefficients. 

It is indeed well known, see \cite{ichinose_remarks_cauchy_problem_schrodinger_necessary_condition}, that
 in the case $c_j(t,x) = c_j(x) \in \mathcal{B}^\infty(\R^n)$ the Cauchy problem associated to operator $S$ may be well-posed in $H^\infty(\R^n)= \bigcap_{m \in \R} H^m(\R^n)$, only if there exist  $M,N >0$ such that
	$$
	\sup_{x \in \R^{n}, \omega \in S^{n-1}} \left| \sum_{j=1}^{n} \int_{0}^{\rho} \Im\, c_j(x+\omega\theta) d\theta \right| \leq M\log(1+\rho) +N, \quad \forall\, \rho \geq 0.
	$$
	Here $\mathcal B^\infty(\R^n)$ denotes the space of all smooth functions on $\R^n$ with uniformly bounded derivatives.
Moreover, see \cite{KB}, if 
\beqs\label{asskb}
|\Im\, c_j(t,x))|\leq C\langle x\rangle^{-\sigma}\,  \forall x\in\R^n,\ t \in [0,T], 
\eeqs
where $\langle x\rangle:=(1+|x|^2)^{1/2}$ and $\sigma \geq 1$, then the Cauchy problem associated to $S$ is well-posed in $H^m(\R^n)$ for every $m \in \R$ when $\sigma > 1$, in $H^\infty(\R^n)$ when $\sigma = 1$. In the latter case, a finite loss of derivatives appears in the solution.
 Results of well-posedness in Gevrey classes and Gelfand-Shilov spaces for operator $S$ can be found respectively in \cite{KB, CicRei, ACR} and \cite{Arias_GS, scncpp5}. A necessary condition for well posedness in a Gevrey class of index $\theta>1$ can be found in \cite{AAMnec}.

We point out that a result of well-posedness in the Schwartz space of rapidly decreasing functions $\mathscr S(\R^n)$ for the Cauchy problem associated to the operator $S$, which is interesting per se, 
appears as a byproduct in the present paper, see Theorem \ref{Th_classical_Schwartz_result} at Section \ref{section_Schwartz_classical_result}.

\medskip

This paper concerns the following question: what happens when the coefficients are irregular, that is if we lose the minimal regularity assumptions on the coefficients and they turn out to be discontinuous functions, or distributions? We focus on the Cauchy problem 
\begin{equation}\label{CPintro}
	\begin{cases}
		S u(t,x) = f(t,x), \quad t \in [0,T],\, x \in \R^{n}, \\
		u(0,x) = g(x), \quad \quad \,\, x \in \R^{n},
	\end{cases}
\end{equation}
for the Schr\"odinger-type operator given in \eqref{city_of_tears} 
with the following assumption on the lower order coefficients
\begin{equation}\label{Alexander_the_Great}
	c_j(t,x) \in C([0,T]; \mathscr{S}'(\R^n)), \quad j = 0, 1, \ldots, n,
\end{equation}
and we take inital data satisfying
\begin{equation}\label{revelations}
	f(t,x) \in C([0,T]; \mathscr{S}'(\R^n)), \quad g \in \mathscr{S}'(\R^n),
\end{equation}
where, as usual, $\mathscr{S}'(\R^n)$ stands for the space of tempered distributions.

In this case the Cauchy problem \eqref{CPintro} is ill-posed either in Sobolev-type spaces or in Gevrey type spaces, cf. \cite{ichinose_remarks_cauchy_problem_schrodinger_necessary_condition, AAMnec}. Moreover, due to the remarkable Schwartz impossibility result, 
the operator $S$ might fail to be defined as an operator acting in the space $C([0,T]; \mathscr{S}'(\R^n))$. Likewise, in this general setting, a non-trivial matter is to define what a solution to \eqref{CPintro} should be. 

\medskip
The concept we are going to deal with is the notion of Schwartz very weak solution, see \cite{AACG,G,GR}. Very weak solutions have been introduced in \cite{GR} to provide a meaningful notion of solution for hyperbolic equations with highly irregular coefficients, namely distributions. See also \cite{CardonaRuzhansky, DGL, DGL2, G, GS24, MRT19}. The main idea is to replace the original equation with a family of regularised equations depending on the parameter $\epsilon$ and to investigate the $\epsilon$-behaviour of the net of corresponding solutions. While in \cite{GR} very weak solutions were modelled on Gevrey classes, here for the analysis of the Schr\"odinger type equations we are interested in, we will work with the space $\mathscr{S}(\R^n)$ of smooth and rapidly decreasing functions (Schwartz functions).

To construct a $\mathscr{S}$-very weak solution to \eqref{CPintro}, we take initial data $f \in C([0,T]; \mathscr{S}'(\R^{n}))$ and $g \in \mathscr{S}'(\R^{n})$, and we follow an approach similar to the one introduced in \cite{GR}  as follows:
\begin{enumerate}
\item we regularise the tempered distributions $c_j(t),f(t),g$ by a special net of Schwartz mollifiers parametrised by a	 positive scale $\omega(\varepsilon)$ converging to $0$ as $\epsilon\to 0^+$; this regula\-ri\-sa\-tion produces a family of Schwartz functions indexed by $\epsilon>0$ and converging in $\mathscr S'(\R^n)$ to the original tempered distributions as $\epsilon\to 0^+$, see the Appendix  (Section \ref{Regularisation of tempered distributions by Schwartz functions}) for the details;
\item we then obtain a family of regularised Cauchy problems 
\begin{equation}\label{regularised_CPintro}
		\begin{cases}
			S_{\varepsilon} v(t,x) = f_\epsilon(t,x), \quad t \in [0,T],\, x \in \R^{n}, \\
			v(0,x) = g_\epsilon(x), \quad \quad \,\, \, x \in \R^{n},
		\end{cases}
	\end{equation}
where  
	\begin{equation}\label{regularised_opintro}
	S_{\varepsilon} = D_t + \sum_{j=1}^{n}D^{2}_{x_j} + \sum_{j=1}^{n} c_{j,\varepsilon}(t,x) D_{x_j} + c_{0,\varepsilon}(t,x).
	\end{equation}
These problems have Cauchy data and coefficients that are continuous functions in $t$ and Schwartz functions in $x$, so they fulfill assumption \eqref{asskb} and we know that the Cauchy problems \eqref{regularised_CPintro} admit unique solutions $u_\epsilon\in C([0,T],H^\infty(\R^n)).$ But we search for well-posedness in $\mathscr S$, so we prove in Theorem \ref{apprn} of Section \ref{n} that we have in fact $u_\epsilon\in C([0,T],\mathscr S(\R^n)).$ 
To conclude this we study an auxiliary Cauchy problem, given by a suitable conjugation by powers of $\langle x\rangle$, in order to conclude rapid decay for the solution $u_\varepsilon$. A careful use of microlocal techniques and a sharp use of the energy method for evolution equations is needed in the proof. In addition, we provide a qualitative analysis of the solutions $(u_\varepsilon)_\varepsilon$, by giving a suitable energy estimate.
\item Using the energy estimate, we study the behavior of the net $(u_\epsilon)_{\epsilon}$ as $\epsilon\to 0^+$, and we prove that the net $(u_\epsilon)_{\epsilon \in (0,1]}$ defines a $\mathscr S$-very weak solution to the Cauchy problem \eqref{CPintro}, that is the net is $\mathscr S$-moderate and the solution is unique modulo negligible changes, see Section \ref{Schwartz very weak solutions} for precise definitions. 
\item Finally, we show in Section \ref{Consistency with regular theory} that the result obtained is consistent with the classical theory: in the case of regular coefficients and Schwartz Cauchy data, the net converges in  $\mathscr S$ to the unique classical solution, and the limit is independent of the regularisation used. 
\end{enumerate}
\
This paper is a second step in the study of Cauchy problem \eqref{CPintro} with irregular coefficients. The first one was made in the recent paper \cite{AACG} where we dealt with a similar problem. There, we analyse coefficients of the type 
\begin{equation}\label{the_dictators}
c_j(t,x) = a_j(t)b_j(x), \quad j =0, 1, \ldots,n,
\end{equation}
where $a_j$ are distributions with compact support in $\mathcal{E}'([0,T])$ and $b_j$ belong to suitable subclasses $H^{-\infty,j}(\R^n)$ of $\mathscr S'(\R^n)$, $j\geq 2$, where $H^{-\infty,j}$ stands for the class of tempered distributions $v$ such that all the derivatives $\partial_\xi^\beta \hat v(\xi)$ have an at least polynomial growth for $|\beta|\leq j$; the standard regularisation of these distributions produces uniformly bounded functions with decay $\langle x\rangle^{-j}$, $j\geq 2$, so, by \cite{KB}, the regularised Cauchy problem turns out to be well posed in every Sobolev space and we prove that the Cauchy problem \eqref{CPintro} admits a $H^\infty$-very weak solution. 

We remark that combining the ideas of the present manuscript with the ones developed in \cite{AACG}, we might have considered coefficients of the type \eqref{the_dictators} with $a_j \in \mathcal{E}'([0,T])$ and $b_j \in \mathscr{S}'(\R^n)$ and still obtain an analogous of our main result (cf. Theorem \ref{main_theorem}). We decided to consider $c_j \in C([0,T];\mathscr{S}'(\R^n))$ in order to obtain a less technical and more elegant result. In addition, the analysis of the regularisations in the space variable is the core of the problem and the most challenging part. Note that, in order to allow tempered distributions in the variable $x$, we need to use a special kind of regularisation (cf. Appendix \ref{Regularisation of tempered distributions by Schwartz functions}) and to change the strategy for solving the regularised problem, which becomes more involved with respect to the one used in \cite{AACG}. 


The manuscript is organised as follows. In Section \ref{Schwartz very weak solutions} we introduce the notion of very weak solution of Schwartz type and we state our main result that will be proven in Sections \ref{n}, \ref{Schwartz very weak well-posedness} and \ref{Iron_maiden}. Consistency with the classical theory in the case of regular coefficients is the topic of Section \ref{Consistency with regular theory}. The final Section \ref{section_Schwartz_classical_result} concerns a well-posedness result in Schwartz space for \eqref{CPintro} in a regular functional setting. We close the paper with two appendices: Appendix  A concerns regularisation of tempered distributions by Schwartz functions and Appendix B deals with some aspects of the theory of pseudo-differential operators relevant to this paper.

	
\medskip

\smallskip


\section{Schwartz very weak solutions and main result}\label{Schwartz very weak solutions}
\

The aim of this section is to 
state the main result of this manuscript. As we already mentioned in the introduction, due to the Schwartz impossibility result about products of distributions, we need to define a suitable concept of solution. The approach that we shall adopt is the so-called very weak solutions (cf. \cite{G,GR}). 

We consider the operator \eqref{city_of_tears} under the hypothesis \eqref{Alexander_the_Great} on the coefficients. We are interested in the Cauchy problem \eqref{CPintro}, where the initial data satisfy $g \in \mathscr{S}'(\R^{n})$ and $f \in C([0,T]; \mathscr{S}'(\R^{n}))$. Once the main problem of the paper is posed, the next step is to define the notion of solution to \eqref{CPintro} that we are going to deal with. To define it, we shall need some preliminaries. \\ 
\indent
Let $\phi$ and $\psi$ be Schwartz functions such that $\phi(0) = 1 = \widehat{\psi}(0)$. Given a positive scale $\omega(\varepsilon)$, $\varepsilon \in (0,1]$, i.e. $\omega$ is positive, bounded, $\omega(\varepsilon) \to 0$ as $\varepsilon \to 0^{+}$ and $\omega(\varepsilon) \geq c \varepsilon^{r}$, for some $c, r > 0$, we define
$$
\phi^{\omega(\varepsilon)}(x) = \phi(\omega(\varepsilon)x) \quad \text{and} \quad \psi_{\omega(\varepsilon)}(x) = \frac{1}{(\omega(\varepsilon))^{n}} \psi \left( \frac{x}{\omega(\varepsilon)} \right).
$$
Now, given any $u \in \mathscr{S}'(\R^n)$, the regularisation 
$$
u_{\omega(\varepsilon)}(x) = \phi^{\omega(\varepsilon)}(x) \{ \psi_{\omega(\varepsilon)} \ast u \}(x)
$$
satisfies 
\begin{equation}\label{children_of_the_sea}
u_{\omega(\varepsilon)} \in \mathscr{S}(\R^n)\,\, \text{and} \,\, u_{\omega(\varepsilon)} \to u \,\,  \text{in} \,\, \mathscr{S}'(\R^n)\,\, \text{as} \,\, \varepsilon \to 0^{+}. 
\end{equation}
See Appendix \ref{Regularisation of tempered distributions by Schwartz functions} at end of the paper for more details on these kind of regularisations. The next proposition is standard and for this reason we are going to omit its proof.

\begin{Prop}\label{the_evil_that_men_do}\leavevmode
	\begin{itemize}
		
		\item[(i)] If $u \in \mathscr{S}(\R^{n})$ then for any $\beta \in \N_0^n$ and $M \in \N_0$ there exists $C_{\beta,M} > 0$ such that 
		$$
		|\langle x \rangle^{M} \partial^{\beta}_{x} \{\phi(\omega(\varepsilon)x)(\psi_{\omega(\varepsilon)} \ast u)(x)\}| \leq C_{\beta,M};
		$$
		
		\item[(ii)]  If $u \in \mathscr{S}(\R^{n})$, $(\partial^{\beta}\phi)(0) = 0$ for all $\beta \neq 0$ and $\int x^{\alpha}\psi(x) dx = 0$ for all $\alpha \neq 0$, then for any $\beta \in \N_0^n$ and any $q, M \in \N_0$ there exists $C > 0$ such that 
		$$
		|\langle x \rangle^{M} \partial^{\beta}_{x}\{\phi(\omega(\varepsilon) x ) (\psi_{\omega(\varepsilon)} \ast u)(x) - u(x)\}| \leq C (\omega(\varepsilon))^{q}.
		$$
	\end{itemize}
\end{Prop}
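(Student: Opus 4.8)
The plan is to reduce both statements to one uniform convolution bound and then control the two factors $\phi(\omega(\varepsilon)x)$ and $\psi_{\omega(\varepsilon)}\ast u$ by Leibniz's rule and Taylor expansions. Throughout I would set $\omega_{0}:=\sup_{\varepsilon\in(0,1]}\omega(\varepsilon)<\infty$. The basic estimate to establish first is: for every $v\in\mathscr S(\R^{n})$, every $M\in\N_0$ and every $\gamma\in\N_0^{n}$ there is $C>0$, \emph{independent of $\varepsilon$}, such that $\langle x\rangle^{M}|\partial^{\gamma}_{x}(\psi_{\omega(\varepsilon)}\ast v)(x)|\le C$ for all $x$. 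Since $\partial^{\gamma}(\psi_{\omega(\varepsilon)}\ast v)=\psi_{\omega(\varepsilon)}\ast\partial^{\gamma}v$ and $\partial^{\gamma}v\in\mathscr S(\R^{n})$, it is enough to treat $\gamma=0$; writing $(\psi_{\omega(\varepsilon)}\ast v)(x)=\int\psi(z)\,v(x-\omega(\varepsilon)z)\,dz$ and using the elementary bound $\langle x\rangle\le\sqrt 2\,\langle x-\omega(\varepsilon)z\rangle\,\langle\omega(\varepsilon)z\rangle$ together with $\langle\omega(\varepsilon)z\rangle^{M}\le(1+\omega_{0}^{2})^{M/2}\langle z\rangle^{M}$, one gets $\langle x\rangle^{M}|(\psi_{\omega(\varepsilon)}\ast v)(x)|\le C\bigl(\sup_{y}\langle y\rangle^{M}|v(y)|\bigr)\int|\psi(z)|\langle z\rangle^{M}\,dz<\infty$, the finiteness coming from $v,\psi\in\mathscr S(\R^{n})$.

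For (i) I would apply Leibniz's rule to $\partial^{\beta}_{x}\{\phi(\omega(\varepsilon)x)(\psi_{\omega(\varepsilon)}\ast u)(x)\}$: a derivative $\partial^{\gamma}_{x}$ falling on $\phi(\omega(\varepsilon)x)$ produces $\omega(\varepsilon)^{|\gamma|}(\partial^{\gamma}\phi)(\omega(\varepsilon)x)$, which is bounded by $\omega_{0}^{|\gamma|}\|\partial^{\gamma}\phi\|_{L^{\infty}}$, while the complementary factor $\psi_{\omega(\varepsilon)}\ast\partial^{\beta-\gamma}u$ is controlled, after multiplication by $\langle x\rangle^{M}$, by the basic estimate applied to $v=\partial^{\beta-\gamma}u$. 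Summing over $\gamma\le\beta$ gives (i).

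For (ii) I would split $\phi(\omega(\varepsilon)x)(\psi_{\omega(\varepsilon)}\ast u)(x)-u(x)=\bigl(\phi(\omega(\varepsilon)x)-1\bigr)(\psi_{\omega(\varepsilon)}\ast u)(x)+\bigl((\psi_{\omega(\varepsilon)}\ast u)(x)-u(x)\bigr)$, apply $\partial^{\beta}_{x}$, and fix an integer $N\ge q$. In the second term $\partial^{\beta}_{x}$ passes through both operations, so with $v=\partial^{\beta}u\in\mathscr S(\R^{n})$ I would Taylor-expand $v(x-\omega(\varepsilon)z)$ at $x$ with integral remainder of order $N$; the moment conditions $\int z^{\alpha}\psi(z)\,dz=0$ for $\alpha\ne0$, together with $\int\psi\,dx=\widehat{\psi}(0)=1$, annihilate every term of order $<N$ except the one reproducing $v(x)$, and the remainder carries the factor $\omega(\varepsilon)^{N}$ and is handled with the weight $\langle x\rangle^{M}$ exactly as in the basic estimate. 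In the first term I would use Leibniz again: when no derivative hits $\phi$, Taylor-expanding $\phi$ at the origin to order $N$ and using $\phi(0)=1$, $(\partial^{\gamma}\phi)(0)=0$ for $\gamma\ne0$, all terms of order $<N$ collapse to $1$, so $\phi(\omega(\varepsilon)x)-1=\omega(\varepsilon)^{N}\sum_{|\gamma|=N}c_{\gamma}\,x^{\gamma}\int_{0}^{1}(1-t)^{N-1}(\partial^{\gamma}\phi)(t\omega(\varepsilon)x)\,dt$ for suitable constants $c_{\gamma}$, and the polynomial growth $x^{\gamma}$ is absorbed by part (i) used with weight $\langle x\rangle^{M+N}$; when a nonzero $\partial^{\gamma}_{x}$ ($\gamma\le\beta$) hits $\phi$, one notes that \emph{every} derivative of $\partial^{\gamma}\phi$ vanishes at $0$ (since $\gamma+\delta\ne0$ for all $\delta$), so the same Taylor argument gives $|(\partial^{\gamma}\phi)(\omega(\varepsilon)x)|\le C_{N}\,\omega(\varepsilon)^{N}|x|^{N}$ and one again concludes via part (i). Since $\omega(\varepsilon)^{N}\le\omega_{0}^{N-q}\,\omega(\varepsilon)^{q}$, all contributions are $O(\omega(\varepsilon)^{q})$.

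The main obstacle is the handling of the factor $\phi(\omega(\varepsilon)x)-1$ and of its derivatives in (ii): one must realise that extracting an arbitrarily high power of $\omega(\varepsilon)$ requires the vanishing at the origin of \emph{all} derivatives of each $\partial^{\gamma}\phi$, not merely $(\partial^{\gamma}\phi)(0)=0$, and one must keep track of how the polynomial factor $|x|^{N}$ created by this Taylor expansion is compensated by the extra decay furnished by part (i); once this is arranged, the rest is routine multi-index bookkeeping.
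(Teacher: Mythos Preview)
Your argument is correct and complete. The paper itself omits the proof entirely, calling the proposition ``standard'', so there is nothing to compare against; your write-up is precisely the routine verification the authors had in mind, and the only delicate point---that in part (ii) one needs the vanishing at the origin of \emph{all} derivatives of each $\partial^{\gamma}\phi$ (not just of $\phi$ itself) in order to extract an arbitrarily high power of $\omega(\varepsilon)$ while compensating the resulting polynomial growth $|x|^{N}$ by the uniform Schwartz bounds from part (i)---is handled correctly.
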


It is convenient to describe $\mathscr{S}(\R^n)$ and $\mathscr{S}'(\R^n)$ in terms of suitable weighted Sobolev spaces. For $m, M \in \R$ we define
$$
H^{m,M}(\R^n) = \{ u \in \mathscr{S}'(\R^n) : \|u\|_{H^{m,M}(\R^n)}= \|\langle x \rangle^{M} \langle D \rangle^{m} u \|_{L^2}<\infty \},
$$
where $\langle D \rangle^m$ stands for the Fourier multiplier given by the symbol $\langle \xi \rangle^{m}$. It is well-known that $H^{m,M}(\R^n)$ are Hilbert spaces and 
$$
\mathscr{S}(\R^n) = \bigcap_{m,M \in \R} H^{m,M}(\R^n), \quad \mathscr{S}'(\R^n) = \bigcup_{m,M \in \R} H^{m,M}(\R^n)
$$
with equivalent limit type topologies. For more details on weighted Sobolev spaces see Proposition $2.3$ and its corollaries in \cite{parenti_sg_calc}, cf. also \cite{Cordes}.

Proposition \ref{the_evil_that_men_do} and \eqref{children_of_the_sea} (cf. Theorem \ref{drunken_lullabies}) motivate the following Definition \ref{def_mod_negli}.

\begin{Def}\label{def_mod_negli}
	\leavevmode
	\begin{itemize}
		\item[(i)] Let $(v_{\varepsilon})_{\varepsilon}$ be a net in $\mathscr{S}(\R^{n})^{(0,1]}$. We say that the net $(v_\varepsilon)_\varepsilon$ is $\mathscr{S}$-moderate if for any $M \in \N_0$ and any $\beta \in \N_{0}^{n}$ there exists $N(M,\beta) = N \in \N_0$ and $C(M,\beta) = C > 0$ such that 
		$$
		\sup_{x\in\R^{n}} \langle x \rangle^{M} |\partial^{\beta}_{x} v_{\varepsilon}(x)| \leq C \varepsilon^{-N}, \quad \forall\, \epsilon\in (0,1].
		$$
		This is equivalent to: for all $M,m \in \N_0$ there exists $N(M,m) = N \in \N_0$ and $C(M,m) = C > 0$ such that 
		$$
		\|v_{\varepsilon}\|_{H^{m,M}(\R^{n})} \leq C \varepsilon^{-N}, \quad  \forall\, \epsilon\in (0,1].
		$$
		
		\item[(ii)] Let  $(v_{\varepsilon})_{\varepsilon}$ be a net in $\mathscr{S}(\R^{n})^{(0,1]}$. We say that the net $(v_{\varepsilon})_{\varepsilon}$ is $\mathscr{S}$-negligible if for any $M\in\N_0$, any $\beta \in \N^{n}_{0}$ and any $q\in\N_0$ there exist $C(M,\beta,q) = C > 0$ such that 
		$$
		\sup_{x \in \R^{n}}|\langle x \rangle^{M} \partial^{\beta}_{x}\partial^{\beta}_{x} v_{\varepsilon}(x)| \leq C \varepsilon^{q}, \quad \forall\, \epsilon\in (0,1].
		$$
		This is equivalent to: for any $M,m \in \N_0$ and any $q\in\N_0$  there exists $C(M,m,q) = C > 0$ such that 
		$$
		\|v_{\varepsilon}\|_{H^{m,M}(\R^{n})} \leq C \varepsilon^{q},\quad \forall\, \epsilon\in (0,1].
		$$
	\end{itemize}
\end{Def}

\begin{Rem}
	We can extend Definition \ref{def_mod_negli} to nets in $\{C([0,T];\mathscr{S}(\R^{n}))\}^{(0,1]}$ just by asking uniform estimates with respect to the variable $t$.
\end{Rem}

\begin{Rem}
	The notions of Schwartz neglibility and Schwartz moderateness also appear in \cite{G2004} (cf. Definition $2.8$) and \cite{G_Topological} (cf. Definition $3.1$).
\end{Rem}

\begin{Rem}
	Let $\mathcal{M}_{\mathscr{S}}$ be the set of all $\mathscr{S}-$moderate nets and $\mathcal{N}_{\mathscr{S}}$ be the set of all $\mathscr{S}-$negligible nets. Then define the quotient space
	$$
	\mathcal{G}_{\mathscr{S}} := \frac{\mathcal{M}_{\mathscr{S}}}{\mathcal{N}_{\mathscr{S}}}.
	$$
	The set $\mathcal{G}_{\mathscr{S}}$ is known as the Colombeau algebra associated to the locally convex space $\mathscr{S}(\R^n)$ (see Definition $3.1$ of \cite{G_Topological}).
	Next, for fixed mollifiers $\phi$ and $\psi$ such that $\phi(0) = 1 = \widehat{\psi}(0)$, $(\partial^{\beta}\phi)(0) = 0$ for all $\beta \neq 0$ and $\int x^{\alpha}\psi(x) dx = 0$ for all $\alpha \neq 0$, the map 
	$$
	\varPsi: \mathscr{S}'(\R^n) \to \mathcal{G}_{\mathscr{S}}, \quad
	\varPsi (u) = (\phi^{\varepsilon} (\psi_{\varepsilon} \ast u))_{\varepsilon \in (0,1]}, \quad \forall\,\, u \in \mathscr{S}'(\R^n),
	$$
	is a well-defined embedding (due to Theorem \ref{drunken_lullabies} and Proposition \ref{the_evil_that_men_do}). Then, the regularisation $(\phi^{\varepsilon} (\psi_{\varepsilon} \ast u))_{\varepsilon}$ gives a way to include tempered distributions in the Colombeau algebra $\mathcal{G}_{\mathscr{S}}$. Notice that in $\mathcal{G}_{\mathscr{S}}$ we have a well-defined multiplication operation which is consistent with the product of functions in $\mathscr{S}(\R^n)$.
\end{Rem}

We are finally ready to introduce the concept of very weak solution that we are interested in and the main result of this paper, namely Theorem \ref{main_theorem}. 

\begin{Def}
	\label{def_vw}
	The net $(u_\varepsilon)_{\varepsilon} \in \{C([0,T];\mathscr{S}(\R^{n}))\}^{(0,1]}$ is a $\mathscr{S}$-very weak solution for the Cauchy problem \eqref{CPintro} if $(u_\varepsilon)_\varepsilon$ is $\mathscr{S}$-moderate and there exist 
	\begin{itemize}
		
		\item $\mathscr{S}$-moderate regularisations $(c_{j,\varepsilon})_{\varepsilon}$ of the coefficients $c_j$, $j = 0, 1, \ldots, n$,
		
		\item $\mathscr{S}$-moderate regularisations $(f_{\varepsilon})_{\varepsilon}, (g_{\varepsilon})_{\varepsilon}$ of the Cauchy data $f$ and $g$,
	\end{itemize}
	such that, for every fixed $\varepsilon$, $u_{\varepsilon}$ solves the regularised Cauchy problem
	\eqref{regularised_CPintro} for the operator \eqref{regularised_opintro}
\end{Def}

\begin{Th}\label{main_theorem} Under the assumptions \eqref{Alexander_the_Great} and \eqref{revelations},
	the Cauchy problem \eqref{CPintro} is $\mathscr{S}$-very weakly well-posed, i.e. \eqref{CPintro} admits a $\mathscr{S}$ very weak solution and the solution is unique in the following sense: $\mathscr{S}$-negligible changes on the regularisations of the equation coefficients and $\mathscr{S}$-negligible changes on the regularisations of the initial data lead to $\mathscr{S}$-negligible changes in the corresponding $\mathscr{S}$-very weak solution.
\end{Th}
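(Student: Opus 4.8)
The plan is to establish the two halves of the statement — existence of a $\mathscr{S}$-very weak solution and its uniqueness in the stated sense — by reducing both to the well-posedness of the regularised Cauchy problem \eqref{regularised_CPintro} in $C([0,T];\mathscr{S}(\R^{n}))$ and to the accompanying energy estimate, i.e. to Theorem \ref{apprn} of Section \ref{n}. For the existence part I would regularise the data by setting
$$
c_{j,\varepsilon}(t,\cdot)=\phi^{\omega(\varepsilon)}\bigl(\psi_{\omega(\varepsilon)}\ast c_{j}(t,\cdot)\bigr),\quad f_\varepsilon(t,\cdot)=\phi^{\omega(\varepsilon)}\bigl(\psi_{\omega(\varepsilon)}\ast f(t,\cdot)\bigr),\quad g_\varepsilon=\phi^{\omega(\varepsilon)}\bigl(\psi_{\omega(\varepsilon)}\ast g\bigr),
$$
with $\phi,\psi$ and the scale $\omega(\varepsilon)$ as in Section \ref{Schwartz very weak solutions}. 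By the material in Appendix \ref{Regularisation of tempered distributions by Schwartz functions} these are nets in $C([0,T];\mathscr{S}(\R^{n}))$ (resp.\ in $\mathscr{S}(\R^{n})$) which are $\mathscr{S}$-moderate: the moderateness of the regularisation of a tempered distribution follows from the structure theorem for $\mathscr{S}'(\R^{n})$ together with the explicit form of $\phi^{\omega(\varepsilon)}$ and $\psi_{\omega(\varepsilon)}$ and the lower bound $\omega(\varepsilon)\ge c\varepsilon^{r}$.

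For each fixed $\varepsilon$ the operator $S_\varepsilon$ has coefficients in $C([0,T];\mathscr{S}(\R^{n}))\subset C([0,T];\mathcal{B}^\infty(\R^n))$ with imaginary parts decaying in $x$, so it fulfils \eqref{asskb}; hence Theorem \ref{apprn} provides a unique $u_\varepsilon\in C([0,T];\mathscr{S}(\R^{n}))$ solving \eqref{regularised_CPintro}, together with an energy estimate of the form
$$
\|u_\varepsilon\|_{C([0,T];H^{m,M}(\R^{n}))}\le C_\varepsilon\Bigl(\|g_\varepsilon\|_{H^{m',M'}(\R^{n})}+\|f_\varepsilon\|_{C([0,T];H^{m',M'}(\R^{n}))}\Bigr),
$$
for suitable $m',M'$ depending on $m,M$, where — and this is the crucial point of Theorem \ref{apprn} — the constant $C_\varepsilon$ is governed by finitely many seminorms of the $c_{j,\varepsilon}$, with a fixed loss of derivatives and decay, and therefore grows at most polynomially in $1/\varepsilon$ because the nets $(c_{j,\varepsilon})_\varepsilon$ are $\mathscr{S}$-moderate. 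Combining $C_\varepsilon\lesssim\varepsilon^{-p}$ with the moderate bounds on $(g_\varepsilon)_\varepsilon$ and $(f_\varepsilon)_\varepsilon$ yields $\|u_\varepsilon\|_{C([0,T];H^{m,M})}\le C\varepsilon^{-N}$ for every $m,M$, so $(u_\varepsilon)_\varepsilon$ is $\mathscr{S}$-moderate; by Definition \ref{def_vw} it is a $\mathscr{S}$-very weak solution of \eqref{CPintro}.

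For the uniqueness part, let $(\tilde c_{j,\varepsilon})_\varepsilon,(\tilde f_\varepsilon)_\varepsilon,(\tilde g_\varepsilon)_\varepsilon$ be a second admissible choice of regularisations with $(c_{j,\varepsilon}-\tilde c_{j,\varepsilon})_\varepsilon$, $(f_\varepsilon-\tilde f_\varepsilon)_\varepsilon$, $(g_\varepsilon-\tilde g_\varepsilon)_\varepsilon$ all $\mathscr{S}$-negligible, and let $\tilde u_\varepsilon$ be the corresponding solution. The difference $w_\varepsilon:=u_\varepsilon-\tilde u_\varepsilon\in C([0,T];\mathscr{S}(\R^n))$ solves
$$
\begin{cases}
S_\varepsilon w_\varepsilon=(f_\varepsilon-\tilde f_\varepsilon)+\sum_{j=1}^{n}(\tilde c_{j,\varepsilon}-c_{j,\varepsilon})D_{x_j}\tilde u_\varepsilon+(\tilde c_{0,\varepsilon}-c_{0,\varepsilon})\tilde u_\varepsilon=:h_\varepsilon,\\
w_\varepsilon(0,\cdot)=g_\varepsilon-\tilde g_\varepsilon.
\end{cases}
$$
By the existence part applied to the tilded data $(\tilde u_\varepsilon)_\varepsilon$ is $\mathscr{S}$-moderate, hence so are $(D_{x_j}\tilde u_\varepsilon)_\varepsilon$; since the coefficient differences are $\mathscr{S}$-negligible, the Leibniz rule and the pointwise characterisation in Definition \ref{def_mod_negli} show that each product $(\tilde c_{j,\varepsilon}-c_{j,\varepsilon})D_{x_j}\tilde u_\varepsilon$ and $(\tilde c_{0,\varepsilon}-c_{0,\varepsilon})\tilde u_\varepsilon$ is $\mathscr{S}$-negligible, whence $(h_\varepsilon)_\varepsilon$ and $(w_\varepsilon(0,\cdot))_\varepsilon$ are $\mathscr{S}$-negligible. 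Feeding these into the energy estimate of Theorem \ref{apprn} for $w_\varepsilon$, whose constant is again $\lesssim\varepsilon^{-p}$, and using that a negligible net times $\varepsilon^{-p}$ is still negligible, we get $\|w_\varepsilon\|_{C([0,T];H^{m,M})}\le C\varepsilon^{q}$ for all $m,M,q$, i.e.\ $(u_\varepsilon-\tilde u_\varepsilon)_\varepsilon$ is $\mathscr{S}$-negligible, which is the asserted uniqueness.

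The main obstacle lies entirely in the input from Theorem \ref{apprn}: upgrading the known well-posedness of the regularised Schrödinger-type problem from $C([0,T];H^{\infty}(\R^n))$ to $C([0,T];\mathscr{S}(\R^n))$ and, decisively for the bookkeeping above, obtaining an energy estimate whose constant depends only polynomially on $1/\varepsilon$ (through finitely many seminorms of the regularised coefficients and a fixed $\varepsilon$-independent loss of derivatives and of decay). This is where the conjugation by powers of $\langle x\rangle$ and the microlocal and energy-method arguments announced in the introduction are needed; granting that estimate, everything in the present theorem reduces to the standard algebraic manipulations of $\mathscr{S}$-moderate and $\mathscr{S}$-negligible nets in the Colombeau-type framework $\mathcal{G}_{\mathscr{S}}$.
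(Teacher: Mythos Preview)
Your overall architecture --- regularise, invoke Theorem \ref{apprn} for each $\varepsilon$, then bookkeep moderateness/negligibility via the energy estimate --- matches the paper exactly, but there is a genuine gap in your use of the energy constant. You assert that the constant $C_\varepsilon$ in the estimate of Theorem \ref{apprn} ``grows at most polynomially in $1/\varepsilon$ because the nets $(c_{j,\varepsilon})_\varepsilon$ are $\mathscr{S}$-moderate,'' and in your final paragraph you take for granted an energy estimate ``whose constant depends only polynomially on $1/\varepsilon$.'' This is not what Theorem \ref{apprn} actually delivers: the estimate \eqref{national_acrobat} has constant
\[
\tilde{C}_{m,s,n,T}\exp\bigl\{C_{m,s,n,T}\,e^{\varepsilon^{-J(N,s,m,n)}}\bigr\},
\]
which is a \emph{double exponential} in a negative power of $\varepsilon$. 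With the standard scale $\omega(\varepsilon)=\varepsilon$ (or any power scale), this blows up far faster than any $\varepsilon^{-N}$ and destroys $\mathscr{S}$-moderateness of $(u_\varepsilon)_\varepsilon$; the same issue kills your uniqueness argument, since ``negligible times $\varepsilon^{-p}$ is negligible'' is true, but ``negligible times $\exp(e^{\varepsilon^{-J}})$'' is not.

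The paper's remedy, which your proposal misses, is to regularise the \emph{coefficients} with the specific quadruple-logarithmic scale
\[
\omega(\varepsilon)=\bigl\{\log(\log(\log(\log(\varepsilon^{-1}))))\bigr\}^{-1}
\]
of \eqref{omegaepsilon}. Because the estimate \eqref{national_acrobat} depends on the coefficients only through $\omega(\varepsilon)^{-J}$, the chain of inequalities $\log y\le C_r y^{1/r}$ applied four times converts $\exp\{C\,e^{\omega(\varepsilon)^{-J}}\}$ into a genuine power $\varepsilon^{-D}$; this is the computation carried out in Section \ref{Schwartz very weak well-posedness}. Note also that only the coefficients need this slow scale --- the data enter the estimate linearly, so their moderateness with the ordinary scale suffices. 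Once you plug in this choice of $\omega(\varepsilon)$ and redo the bounds, the rest of your argument (including the difference equation for $w_\varepsilon$ and the product-of-negligible-with-moderate step) goes through exactly as in the paper.
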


\begin{Rem}
	Theorem \ref{main_theorem} implies that \eqref{CPintro} is well-posed in $\mathcal{G}_{\mathscr{S}}$. More precisely, we can embed the coefficients $c_j$ of \eqref{city_of_tears} (via regularisations parametrised by a suitable positive scale $\omega(\varepsilon)$, see \eqref{omegaepsilon}) and the Cauchy data of \eqref{CPintro} in the Colombeau algebra $\mathcal{G}_{\mathscr{S}}$ in such a way that the corresponding embedded Cauchy problem is well-posed in $\mathcal{G}_{\mathscr{S}}$.
\end{Rem}

To prove Theorem \ref{main_theorem} we need to obtain a $\mathscr{S}$-very weak solution for \eqref{CPintro}, that is, we need to solve the regularised problem \eqref{regularised_CPintro} in $\mathscr{S}(\R^{n})$. In order to do it, we shall employ the classical techniques developed in \cite{KB}. In the sequel we shall fix the considered regularisations for the coefficients of \eqref{city_of_tears} and for the data in \eqref{CPintro}. In what follows, we shall use the standard positive scale $\varepsilon$ for sake of simplicity and we shall replace it with a suitable positive scale $\omega(\varepsilon)$ just when it is needed. 

\subsection{Regularisation of $c_j(t,x)$, $j=0,1,\ldots,n$} 
Let $\psi \in \mathscr{S}(\R^n)$ with $\int \psi = 1$ and $\phi \in \mathscr{S}(\R^{n})$ with $\phi(0) = 1$. We then define 
\begin{equation}\label{eq_estimates_regularised_c_j_0}
	c_{j,\varepsilon}(t,x) := \phi(\varepsilon x)(\psi_\varepsilon \ast_x c_j)(t,x). 
\end{equation}
Then we have $c_{j,\varepsilon} \in C([0,T];\mathscr{S}(\R^{n}))$ and the following estimate holds
\begin{equation}\label{regbj}
	|\langle x \rangle^{M} \partial^{\beta}_{x}c_{j,\varepsilon}(t,x)| \leq C_{\beta,M} \varepsilon^{-|\beta|-M-N}, \quad t \in [0,T], x \in \R^n, \beta \in \N_0^n,
\end{equation}
where $N > 0$ is a number depending on the coefficients $c_j$, $j = 0, 1, \ldots, n$, and on the dimension.

\subsection{Regularisation of the Cauchy data} 
Let $\mu, \nu \in \mathscr{S}(\R^{n})$ with $\int \mu = 1$ and $\nu(0) = 1$. We then define $f_\epsilon(t,x) = \nu(\varepsilon x) (\mu_{\varepsilon} \ast_x f)(t,x)$ and $g_\epsilon(x) = \nu(\varepsilon x) \mu_{\varepsilon}(x) \ast g(x)$. Then we have $f_{\varepsilon} \in C([0,T]; \mathscr{S}(\R^{n}))$ and the following estimates
\begin{equation}
	|\langle x \rangle^{M} \partial^{\beta}_{x}f_{\varepsilon}(t,x)|\leq C_{\beta,M} \varepsilon^{-|\beta|-M-\tilde{N}_f},\quad 
	|\langle x \rangle^{M} \partial^{\beta}_{x} g_\epsilon(x)| \leq C_{\beta,M} \varepsilon^{-|\beta|-M-\tilde{N}_g},
\end{equation}
where $\tilde{N}_f > 0$ is a number depending on $f$ and on the dimension $n$, $\tilde{N}_g > 0$ is a number depending on $g$ and on the dimension $n$. By these estimates we immediately get that for all $m,M \in \N_0$ there exist $C>0$, $N_f\in\N_0$ and $N_g\in\N_0$ such that 
\begin{eqnarray}\label{stimaf}
	\| f_\varepsilon(t,\cdot) \|_{H^{m,M}}&\leq& C_{m,M} \varepsilon^{-N_f}, \quad \forall\, t\in[0,T],\ \epsilon\in (0,1] \\
	\| g_\varepsilon \|_{H^{m,M}}&\leq& C_{m,M} \varepsilon^{-N_g},\quad \forall\, \epsilon\in (0,1].\label{stimag}
\end{eqnarray}

\medskip

We are finally ready to define the family of regularised Cauchy problems that we shall study in the subsequent sections. We consider the family of regularised operators \eqref{regularised_opintro} for $\varepsilon \in (0, 1]$ 
and then the family of regularised Cauchy problems
\begin{equation}\label{regularised_CP_true}
	\begin{cases}
		S_{\varepsilon} v(t,x) = f(t,x), \quad t \in [0,T],\, x \in \R^{n}, \\
		v(0,x) = g(x), \quad \quad \,\, \, x \in \R^{n},
	\end{cases}
\end{equation}
where $f \in C([0,T];\mathscr{S}(\R^{n}))$, $g \in \mathscr{S}(\R^{n})$ and the coefficients of $S_\varepsilon$ defined in \eqref{regularised_opintro} are given in \eqref{eq_estimates_regularised_c_j_0}. In the next sections we will obtain a net of solutions $(u_{\varepsilon})_{\varepsilon \in (0,1]}$ where for every $\varepsilon$ the function $u_{\varepsilon} \in C([0,T]; \mathscr{S}(\R^{n}))$  is the unique solution of the Cauchy problem \eqref{regularised_CP_true}. Moreover, we will also derive energy estimates for the solutions $u_{\varepsilon}$ expliciting how the constants depend on the parameter $\varepsilon$.

\smallskip


\section{Solving the regularised problem \eqref{regularised_CP_true}}\label{n}
\

We want to prove that there exists a unique solution in $\mathscr{S}(\R^{n})$ for the problem \eqref{regularised_CP_true}. Since $c_{j,\varepsilon} \in C([0,T];\mathscr{S}(\R^{n}))$ we know that \eqref{regularised_CP_true} is well-posed in $H^{\infty}(\R^{n})$ without loss of derivatives, i.e., there exists a unique solution $u \in C([0,T];H^{\infty}(\R^{n}))$ satisfying (cf. Proposition $5.1$ in \cite{AACG})
$$ 
\|u_{\varepsilon}\|^{2}_{H^m} \leq  C_{n,m} \exp\left\{ C_{T,m,n} e^{\varepsilon^{-N-\theta_{n,m}}} \right\} \left\{ \|g\|^{2}_{H^{m}} + \int_{0}^{t} \|f(\tau)\|^{2}_{H^{m}} d\tau \right\},
$$
where $N$ is a natural number depending on the coefficients $c_j$ and $\theta_{n,m}$ is a natural number depending on the dimension $n$ and on the Sobolev index $m$. We want to conclude that the solution $u$ is also rapidly decreasing at infinity. This motivates the following conjugation: for a fixed $s \in \N_0$ we define 
\begin{align}\label{o_toco}
S_{\varepsilon, s} &:= \langle x \rangle^{s} S_{\varepsilon} \langle x \rangle^{-s} \nonumber \\
	&=	D_t + \sum_{j=1}^{n} D^{2}_{x_j} + \sum_{j=1}^{n} c_{j,\varepsilon}(t,x) D_{x_j} + 2is \langle x \rangle^{-1} \sum_{j=1}^{n}\frac{x_j}{\langle x \rangle} D_{x_j} + c_{0,\varepsilon, s}(t,x),
\end{align}
where 
\begin{align*}
	c_{0,\varepsilon, s}(t,x)  &= c_{0,\varepsilon}(t,x) + is \langle x \rangle^{-1} \sum_{j=1}^{n} c_{j,\varepsilon}(t,x) \frac{x_j}{\langle x \rangle} 
	- s(s+2) \langle x \rangle^{-2} \frac{|x|^{2}}{\langle x \rangle^{2}} +  ns \langle x \rangle^{-2}. 
\end{align*}

Observe that in $S_{\varepsilon, s}$ the new purely imaginary first order term
\begin{align}\label{scars_of_time}
	2is \langle x \rangle^{-1} \sum_{j=1}^{n}\frac{x_j}{\langle x \rangle} D_{x_j}
\end{align}
appears. This term decays exactly like $\langle x \rangle^{-1}$ for $|x| \to \infty$. From the classical theory presented in \cite{KB} we know that the Cauchy problem for the operator $S_{\varepsilon, s}$ is well-posed in $H^{\infty}(\R^{n})$ and the decay $\langle x \rangle^{-1}$ on the first order coefficients produces a finite loss of Sobolev regularity with respect to the initial data. Since \eqref{scars_of_time} does not depend on $\varepsilon$, the loss of regularity produced by this new term will not depend on $\varepsilon$, but will depend on $s$. This intutitive idea leads to the following result.  

\begin{Prop}\label{pompem}
	Let $\tilde{f} \in C([0,T]; \mathscr{S}(\R^{n}))$ and $\tilde{g} \in \mathscr{S}(\R^{n})$. There exists a unique solution $u$ in $C([0,T];H^{\infty}(\R^{n}))$ to the Cauchy problem 
	\begin{equation}\label{sign_of_the_cross}
		\begin{cases}
			S_{\varepsilon,s} u(t,x) = \tilde{f}(t,x), \quad t \in [0,T], x \in \R^{n}, \\
			u(0,x) = \tilde{g}(x), \qquad \quad \, 	x \in \R^{n},
		\end{cases}
	\end{equation}
	and $u$ satisfies: for each $m \in \N_0$
	\begin{equation}\label{psycho_killer}
		\| u \|^{2}_{H^{m-c(s)}} \leq  C_{m,s,n,T} \exp\left\{ C_{m,s,n,T} e^{\varepsilon^{-J(N,s,m,n)}} \right\} \left\{ \|\tilde{g}\|^{2}_{H^{m}} + \int_{0}^{t} \|\tilde{f}(\tau)\|^{2}_{H^{m}} d\tau \right\},
	\end{equation}
	where $c(s) = Cs$ for some constant $C > 0$ and $J(N, s, m,n)$ is a natural number depending on $N,s,m$ and $n$.
\end{Prop}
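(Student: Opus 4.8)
\noindent\textit{Proof plan.} The plan is to deduce existence and uniqueness of a solution $u\in C([0,T];H^\infty(\R^n))$ to \eqref{sign_of_the_cross} directly from the classical theory of \cite{KB}, and to extract the energy estimate \eqref{psycho_killer} from that theory while keeping track of the dependence of the constants on $\varepsilon$ and $s$. By \eqref{o_toco} the operator $S_{\varepsilon,s}$ has the form $D_t+\sum_{j=1}^nD_{x_j}^2+\sum_{j=1}^na_j(t,x)D_{x_j}+a_0(t,x)$ with $a_j(t,x)=c_{j,\varepsilon}(t,x)+2is\,x_j\langle x\rangle^{-2}$ and $a_0=c_{0,\varepsilon,s}$, and all these coefficients lie in $C([0,T];\mathcal{B}^\infty(\R^n))$, being sums of Schwartz functions (by \eqref{regbj}) and of the smooth bounded rational functions $x_j\langle x\rangle^{-2}$, $\langle x\rangle^{-2}$, $|x|^2\langle x\rangle^{-4}$; moreover the imaginary parts of the first order coefficients decay at infinity, so \cite{KB} applies. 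Uniqueness in $C([0,T];H^\infty(\R^n))$ will then also follow by applying \eqref{psycho_killer} to the difference of two solutions, so the real content is the estimate.

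The decisive structural observation is the splitting of the only potentially dangerous quantity, the imaginary part of the first order coefficients:
\[
\Im a_j(t,x)=\Im c_{j,\varepsilon}(t,x)+2s\,x_j\langle x\rangle^{-2},\qquad j=1,\dots,n.
\]
The second summand is \emph{independent of $\varepsilon$} and satisfies $|\partial_x^\beta(2s\,x_j\langle x\rangle^{-2})|\le C_\beta\,s\,\langle x\rangle^{-1-|\beta|}$, i.e.\ it is of $\langle x\rangle^{-1}$-type with strength proportional to $s$; the first summand, by \eqref{regbj} used with $M=|\beta|+2$, satisfies $|\partial_x^\beta\Im c_{j,\varepsilon}(t,x)|\le C_\beta\,\varepsilon^{-N_\beta}\langle x\rangle^{-2-|\beta|}$ with $N_\beta$ depending on $\beta,N,n$, i.e.\ it is of $\langle x\rangle^{-\sigma}$-type with $\sigma=2>1$ but with an $\varepsilon$-dependent constant. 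By the theory recalled in the Introduction (see \cite{KB}), an imaginary first order coefficient of $\langle x\rangle^{-\sigma}$-type with $\sigma>1$ produces \emph{no} loss of Sobolev regularity (only a constant factor depending on the coefficient), whereas one of $\langle x\rangle^{-1}$-type with constant $\mu$ produces a finite loss of Sobolev regularity proportional to $\mu$. This is exactly why in \eqref{psycho_killer} the $\varepsilon$-dependence will be confined to the (double exponential) constant while the loss $c(s)=Cs$ will be $\varepsilon$-independent: one must not treat $\Im a_j$ as a single $\langle x\rangle^{-1}$-type term of strength $\sim s+\varepsilon^{-N}$, but exploit the genuine $\langle x\rangle^{-2}$ decay of $\Im c_{j,\varepsilon}$.

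To make this quantitative I would symmetrise \eqref{sign_of_the_cross} by a pseudodifferential conjugation $e^{\Lambda_s(x,D)}e^{\Lambda_\varepsilon(x,D)}$, built in two stages as in \cite{KB}. Here $\Lambda_\varepsilon$ is built from the $\sigma=2$ part $\Im c_{j,\varepsilon}$: it is a symbol of order $0$ with seminorms $O(\varepsilon^{-J})$, so that $e^{\pm\Lambda_\varepsilon}$ are bounded on every $H^m(\R^n)$ with operator norm $\le\exp\{C\varepsilon^{-J}\}$ and cause no loss of derivatives; while $\Lambda_s$ is built from the $\langle x\rangle^{-1}$-type term $2s\,x_j\langle x\rangle^{-2}$, is independent of $\varepsilon$, and is responsible for a fixed finite loss of $c(s)=Cs$ derivatives, $C$ an absolute constant. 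I would then run the $L^2$ energy argument for the symmetrised equation as in the proof of Proposition~$5.1$ of \cite{AACG}: the $\varepsilon^{-N}$-type bounds \eqref{regbj} on the remaining lower order terms (including $c_{0,\varepsilon,s}$) and on the pseudodifferential remainders generated by the conjugation produce, after Gronwall's inequality, a factor of the form $\exp\{C_{m,s,n,T}\,e^{\varepsilon^{-J(N,s,m,n)}}\}$, exactly as in the $H^\infty$ estimate for $S_\varepsilon$ recalled before the statement; conjugating back then yields \eqref{psycho_killer}, with the loss $m\mapsto m-c(s)$ coming solely from $e^{\pm\Lambda_s}$. Existence follows from the same a priori estimate by the standard approximation/continuity argument used in \cite{KB,AACG}.

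The hard part will be the bookkeeping of the constants through the two conjugations: one has to keep $e^{\Lambda_s}$ (carrying the $\varepsilon$-independent loss $c(s)=Cs$) and $e^{\Lambda_\varepsilon}$ (carrying the $\varepsilon$-dependent, loss-free factor) genuinely separate, estimate their composition together with all the commutators it generates, maintain uniform-in-$\varepsilon$ control of everything except the explicitly allowed factor $\exp\{C\,e^{\varepsilon^{-J}}\}$, and check that the $s$-dependent but $\varepsilon$-independent lower order terms occurring in \eqref{o_toco} do not upset this separation.
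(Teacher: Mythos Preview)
Your proposal is correct and follows essentially the paper's own proof (Section~\ref{Iron_maiden}): the paper performs precisely the two-stage conjugation you describe, with $e^{\lambda_1}(x,D)$ of positive order $\sim M_1(s)$ (your $\Lambda_s$) handling the $\varepsilon$-independent $\langle x\rangle^{-1}$ term and producing the loss $c(s)$, and $e^{\lambda_2}(x,D)$ of order $0$ with $M_2\sim\varepsilon^{-N-2}$ (your $\Lambda_\varepsilon$) handling the Schwartz coefficients $c_{j,\varepsilon}$, followed by sharp G{\aa}rding and Gronwall. One technical point worth flagging in the bookkeeping you anticipate is that the inner conjugation by $e^{\lambda_1}$ leaves a residual term $p_{\varepsilon,\log}=\sum_j\partial_{\xi_j}\lambda_1\,D_{x_j}p_\varepsilon$ with logarithmic $\xi$-growth and $\varepsilon$-dependent seminorms, which the paper absorbs by choosing the large cut-off parameter $h$ so that $h^{-1/2}\varepsilon^{-N-3}$ is controlled.
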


Proposition \ref{pompem} is crucial to prove the main result of the paper. Its proof is the most challenging part of the paper. However, since it is long and technical, we postpone it to Section \ref{Iron_maiden} in order to address the reader as fast as possible to the proof of the main result.

\smallskip

The next theorem is a direct consequence of Proposition \ref{pompem}.

\begin{Th}\label{th}\label{apprn}
	For every $\varepsilon \in (0,1]$ 
	consider the regularised Cauchy problem \eqref{regularised_CP_true} with initial data $f \in C([0,T];\mathscr{S}(\R^{n}))$ and $g \in \mathscr{S}(\R^{n})$. Then there exists a unique solution $u_{\varepsilon} \in C([0,T]; \mathscr{S}(\R^{n}))$ for the problem \eqref{regularised_CP_true}. Moreover, the solution $u_\varepsilon$ satisfies for every $m,s\in\N_0$:
	\begin{equation}\label{national_acrobat}
		\|u_{\varepsilon}\|^{2}_{H^{m-c(s),s}} \leq  \tilde{C}_{m,s,n,T} \exp\left\{ C_{m,s,n,T} e^{\varepsilon^{-J(N,s,m,n)}} \right\} \left\{ \|g\|^{2}_{H^{m,s}} + \int_{0}^{t} \|f(\tau)\|^{2}_{H^{m,s}} d\tau \right\},
	\end{equation}
	where 
	\begin{itemize}
		\item[(i)] The constants $C_{m,s,n,T}$ and $\tilde{C}_{m,s,n,T}$ depend on the coefficients $c_j$, $j=0,1,\ldots,n$, and on the mollifiers $\psi, \phi$; 
		
		
		\item[(ii)] $N$ is a positive integer depending on the coefficients $c_0, c_1, \ldots, c_n$ and on the dimension;
		
		\item[(iii)] $J(N,s,m,n)$ is a natural number depending on $N,s,m$ and $n$.
	\end{itemize}
\end{Th}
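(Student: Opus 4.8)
The plan is to deduce Theorem \ref{apprn} from Proposition \ref{pompem} by undoing the conjugation \eqref{o_toco}. Since $c_{j,\varepsilon}\in C([0,T];\mathscr S(\R^n))\subset C([0,T];\mathcal B^\infty(\R^n))$, the operator $S_\varepsilon$ in \eqref{regularised_opintro} meets the hypotheses of \cite{KB}, so the Cauchy problem \eqref{regularised_CP_true} has a unique solution $u_\varepsilon\in C([0,T];H^\infty(\R^n))$ (cf.\ also Proposition~5.1 in \cite{AACG}); thus only two things remain to be shown, namely that $u_\varepsilon$ is rapidly decreasing and that it obeys \eqref{national_acrobat}.

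Fix $s\in\N_0$. Since multiplication by $\langle x\rangle^s$ is continuous on $\mathscr S(\R^n)$, we have $\langle x\rangle^s f\in C([0,T];\mathscr S(\R^n))$ and $\langle x\rangle^s g\in\mathscr S(\R^n)$, so Proposition \ref{pompem} applied with $\tilde f=\langle x\rangle^s f$ and $\tilde g=\langle x\rangle^s g$ produces a unique $\tilde u_s\in C([0,T];H^\infty(\R^n))$ solving \eqref{sign_of_the_cross} with these data and satisfying \eqref{psycho_killer}. Putting $w_s:=\langle x\rangle^{-s}\tilde u_s$ — which still lies in $C([0,T];H^\infty(\R^n))$ because $\langle x\rangle^{-s}\in\mathcal B^\infty(\R^n)$ acts boundedly on every $H^m(\R^n)$ — the identity \eqref{o_toco} gives $S_\varepsilon w_s=\langle x\rangle^{-s}S_{\varepsilon,s}\tilde u_s=\langle x\rangle^{-s}\langle x\rangle^s f=f$ and $w_s(0)=\langle x\rangle^{-s}\langle x\rangle^s g=g$. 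Hence $w_s$ solves \eqref{regularised_CP_true} in $C([0,T];H^\infty(\R^n))$, and uniqueness of the $H^\infty$-solution forces $w_s=u_\varepsilon$, i.e.\ $\tilde u_s=\langle x\rangle^s u_\varepsilon$ for every $s\in\N_0$. In particular $\langle x\rangle^s u_\varepsilon\in C([0,T];H^m(\R^n))$ for all $m,s\in\N_0$, which together with $\mathscr S(\R^n)=\bigcap_{m,M}H^{m,M}(\R^n)$ yields $u_\varepsilon\in C([0,T];\mathscr S(\R^n))$; uniqueness in $C([0,T];\mathscr S(\R^n))$ is then automatic, since every $\mathscr S$-solution is an $H^\infty$-solution.

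It remains to pass from \eqref{psycho_killer} for $\tilde u_s=\langle x\rangle^s u_\varepsilon$ to \eqref{national_acrobat}. On the left, $\|\tilde u_s\|_{H^{m-c(s)}}=\|\langle D\rangle^{m-c(s)}\langle x\rangle^s u_\varepsilon\|_{L^2}$, whereas by definition $\|u_\varepsilon\|_{H^{m-c(s),s}}=\|\langle x\rangle^s\langle D\rangle^{m-c(s)}u_\varepsilon\|_{L^2}$; these are comparable because $\langle D\rangle^{m-c(s)}\langle x\rangle^s$ is invertible with inverse $\langle x\rangle^{-s}\langle D\rangle^{-(m-c(s))}$, and the operator $\langle x\rangle^s\langle D\rangle^{m-c(s)}\langle x\rangle^{-s}\langle D\rangle^{-(m-c(s))}$ is an $SG$ pseudodifferential operator of order $(0,0)$, hence bounded on $L^2$ with norm depending only on $m,s,n$ (see Appendix B and \cite{parenti_sg_calc}). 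The same reordering turns $\|\langle x\rangle^s g\|_{H^m}$ and $\|\langle x\rangle^s f(\tau)\|_{H^m}$ into $\|g\|_{H^{m,s}}$ and $\|f(\tau)\|_{H^{m,s}}$ up to constants depending only on $m,s,n$. Absorbing all these comparison constants into $\tilde C_{m,s,n,T}$, and reading off the dependence of the remaining constants and of the integers $N$, $J(N,s,m,n)$ from Proposition \ref{pompem}, gives \eqref{national_acrobat} together with (i)--(iii).

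Since all the substantive analytic work — solving the conjugated Cauchy problem \eqref{sign_of_the_cross} and quantifying the $\varepsilon$-blow-up of the energy constants — is concentrated in Proposition \ref{pompem}, the only nontrivial point left in this deduction is the commutation of $\langle x\rangle^s$ with powers of $\langle D\rangle$, i.e.\ the equivalence of the two natural orderings defining the weighted Sobolev spaces $H^{m,M}(\R^n)$. I expect this to be the (mild) main obstacle, and it is settled by the $SG$ pseudodifferential calculus recalled in Appendix B.
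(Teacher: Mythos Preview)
Your proposal is correct and follows exactly the route the paper intends: the paper states only that Theorem \ref{apprn} is ``a direct consequence of Proposition \ref{pompem}'', and your argument --- apply Proposition \ref{pompem} to $\tilde f=\langle x\rangle^s f$, $\tilde g=\langle x\rangle^s g$, undo the conjugation \eqref{o_toco}, invoke $H^\infty$-uniqueness to identify $\langle x\rangle^{-s}\tilde u_s$ with $u_\varepsilon$, and then swap $\langle x\rangle^s$ past $\langle D\rangle^{m-c(s)}$ via the $SG$ calculus to convert \eqref{psycho_killer} into \eqref{national_acrobat} --- is precisely how one makes this ``direct consequence'' explicit. The only point you flag as potentially delicate (the commutation yielding the norm equivalence $\|\langle D\rangle^{m'}\langle x\rangle^s v\|_{L^2}\sim\|\langle x\rangle^s\langle D\rangle^{m'}v\|_{L^2}$) is indeed standard in the $SG$ framework of \cite{parenti_sg_calc}, which the paper already cites when introducing the spaces $H^{m,M}$.
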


\smallskip


\section{From the regularised problem to the original problem}\label{Schwartz very weak well-posedness}
\

In this section we shall apply Theorem \ref{th} to prove Theorem \ref{main_theorem}.

\subsection{Existence}
The regularised Cauchy data $f_\epsilon (t), g_\epsilon$ fulfill estimates \eqref{stimaf}, \eqref{stimag}; moreover, from Theorem \ref{apprn} the regularised problem \eqref{regularised_CP_true} with data $f_\varepsilon(t)$ and $g_\varepsilon$ has a unique solution $u_\varepsilon\in C([0,T];\mathscr S(\R^{n}))$ satisfying for every $m, s\in\N_0$ 
\begin{eqnarray*}
\|u_{\varepsilon}\|^{2}_{H^{m-c(s),s}}& \leq&  \tilde{C}_{m,s,n,T} \exp\left\{ C_{m,s,n,T} e^{\varepsilon^{-J(N,s,m,n)}} \right\} \left\{ \|g_\epsilon\|^{2}_{H^{m,s}} + \int_{0}^{t} \|f_\epsilon(\tau)\|^{2}_{H^{m,s}} d\tau \right\}, \\
	&\leq& A_{m,s}  \,\text{exp}(B_{m,s}e^{\varepsilon^{-N_{m,s}}}) \varepsilon^{-(N_f + N_g)}, \quad \varepsilon \in (0,1], t \in [0, T],
\end{eqnarray*}
where $A_{m,s}, B_{m,s} > 0$ are independent from $\varepsilon$ and $N_{m,s}\in \N_0$ is independent from $\varepsilon$ but depends on the coefficients $c_j$, $j = 0, 1, \ldots, n$, on the dimension $n$ and on Sobolev indices $m,s$.  
We thus obtain a $\mathscr{S}$-moderate net of solutions for the problem \eqref{CPintro} by regularising the coefficients $c_j$ via mollifiers indexed by the positive scale 
\beqs\label{omegaepsilon}
\omega(\varepsilon) = \{ \log(\log(\log(\log(\varepsilon^{-1})))) \}^{-1}, \quad \varepsilon \in (0, 1).
\eeqs
Indeed, by inequality $\log(y) \leq C_{r} y^{\frac{1}{r}}$, for all $y \geq 2$ and all $r \geq 1$, we get
$$\omega(\varepsilon)^{-N_{m,s}} = \{\log(\log(\log(\log(\varepsilon^{-1}))))\}^{N_{m,s}} \leq C_{N_{m,s}}^{N_{m,s}} \log(\log(\log(\varepsilon^{-1}))) = \log\left(\log(\log(\varepsilon^{-1}))^{C^{N_{m,s}}_{N_{m,s}}}\right)$$
and so we have $$ B_{m,s}e^{\omega(\varepsilon)^{-N_{m,s}}} \leq B_{m,s} \log(\log(\varepsilon^{-1}))^{C^{N_{m,s}}_{N_{m,s}}}\leq \tilde B_{m,s, N_{m,s}}\log(\varepsilon^{-1}) = \log(\varepsilon^{-D_{N_{m,s}}}),
$$
where $D_{N_{m,s}} \in \N_0$ is a large number depending on $N_{m,s}$ and on $B_{m,s}$. In this way, we obtain that for any $m,s \in \N_0$ we find constants $A_{m,s}, D_{N_{m,s}}$ such that 
$$
\|u_{\varepsilon}(t)\|^{2}_{H^{m-c(s),s}} \leq A_{m,s} \varepsilon^{-D_{N_{m,s}}-(N_f+N_g)}, \quad t \in [0,T], \, \varepsilon \in (0,1). 
$$
Therefore, the Cauchy problem \eqref{CPintro} admits 
a $\mathscr S'$-very weak solution.

\subsection{Uniqueness}\label{subsection_uniqueness}

Suppose that we perturb the coefficients of the regularised operator $S_{\varepsilon}$ 
in the following way
\begin{align*}
	S'_{\varepsilon} := D_t + \sum_{j=1}^{n} D^{2}_{x_j} + \sum_{j=1}^{n}(c_{j,\varepsilon}(t,x)+c'_{j,\varepsilon}(t,x)) D_{x_j} + (c_{0,\varepsilon}(t,x)+c'_{0,\varepsilon}(t,x))
\end{align*}
where $(c'_{j,\varepsilon})_{\varepsilon}$ are $\mathscr{S}$-negligible.
Suppose moreover that the net $(u'_{\varepsilon})_\varepsilon$ solves the Cauchy problem
\[
\begin{cases}
	S'_{\varepsilon} u(t,x) = f_\varepsilon(t,x) +p_{\varepsilon}(t,x), \quad t \in [0,T],\, x \in \R, \\
	u(0,x) = g_\varepsilon(x) + q_{\varepsilon}(x) \quad\qquad \,\,\,\,\, x \in \R,
\end{cases}
\]
where $(p_\varepsilon)_{\varepsilon}$ and $(q_\varepsilon)_{\varepsilon}$ are $\mathscr{S}$-negligible nets. We now want to compare the two very weak solutions $u_\varepsilon$ and $u'_\varepsilon$. Since
\[
\begin{cases}
	S_{\varepsilon} u_\varepsilon(t,x) = f_{\varepsilon}(t,x), \quad t \in [0,T],\, x \in \R, \\
	u_\varepsilon(0,x) = g_{\varepsilon}(x), \quad \quad \,\,\, x \in \R,
\end{cases}
\]
and
\[
\begin{cases}
	S'_{\varepsilon} u'_\varepsilon(t,x) = f_{\varepsilon}(t,x)+p_{\varepsilon}(t,x), \quad t \in [0,T],\, x \in \R, \\
	u'_\varepsilon(0,x) = g_{\varepsilon}(x) + q_{\varepsilon}(x),  \qquad\quad\,\, x \in \R,
\end{cases}
\]
it follows that 
\[
\begin{cases}
	S_\varepsilon(u_\varepsilon-u'_\varepsilon)(t,x) = -p_{\varepsilon}(t,x)-(S_{\varepsilon}-S'_\varepsilon) u'_\varepsilon(t,x), \quad t \in [0,T],\, x \in \R, \\
	(u_\varepsilon-u'_\varepsilon)(0,x) =-q_{\varepsilon}(x), \qquad \qquad  \qquad \qquad \qquad \quad x \in \R.
\end{cases}
\]
By applying estimate \eqref{national_acrobat}, using the scale \eqref{omegaepsilon} to regularise the coefficients $c_j$, we have that for all $m, s \in \N_0$ there exist $\tilde{C}>0$ and $\tilde{N}\in\N_0$ such that 
\begin{equation}
	\label{est_neg_uniq}
	\|u_{\varepsilon}-u'_\varepsilon\|^{2}_{H^{m-c(s),s}} \leq  \tilde{C}\varepsilon^{-\tilde{N}} 
	\left\{ \|q_{\varepsilon}\|_{H^{m,s}} +  \int_{0}^{t} \|p_{\varepsilon}(\tau)+(S_{\varepsilon}-S'_\varepsilon) u'_\varepsilon(\tau)\|^{2}_{H^{m,s}} d\tau\right\}.
\end{equation}
Since $(p_\varepsilon)$ and $(q_\varepsilon)$ are both $\mathscr{S}$-negligible, the coefficients of the operator $(S_\varepsilon-S'_\varepsilon)_\varepsilon$ are $\mathscr{S}$-negligible and $(u'_{\varepsilon})_{\varepsilon}$ is $\mathscr{S}$-moderate, we conclude that the right-hand side of \eqref{est_neg_uniq} can be estimated by any positive power of $\varepsilon$. Hence, $(u_{\varepsilon}-u'_{\varepsilon})_{\varepsilon}$ is $\mathscr{S}$-negligible. 

%

\smallskip


\section{Proof of Proposition \ref{pompem}}\label{Iron_maiden}
\

To solve \eqref{sign_of_the_cross}, we take inspiration from the approach used in \cite{KB} which is based on a suitable change of variable which turns \eqref{sign_of_the_cross} into an auxiliary Cauchy problem well-posed in $L^2(\R^n)$. The change of variable we use is the composition of two  pseudodifferential operators. In the next lines we introduce this change of variable. A survival toolkit of what we need about the theory of pseudodifferential operators is reported in the Appendix \ref{pseudodifferential_operators} at the end of the paper.
 

In \cite{KB} the authors proved that for every $M_1, M_2>0, $ there exist real-valued functions $\tilde\lambda_1, \tilde\lambda_2$ satisfying for $x \in \R^n, |\xi|\geq 1, \alpha, \beta \in \N_0^n$, $\beta \neq 0$: 
$$
|\partial^{\alpha}_{\xi}\tilde{\lambda}_{1}(x,\xi)| \leq
	M_{1}C_{\alpha} |\xi|^{-|\alpha|} \log(2+|\xi|),$$
$$|\partial^{\alpha}_{\xi}\tilde{\lambda}_{2}(x,\xi)| \leq	M_{2} C_{\alpha} |\xi|^{-|\alpha|},$$
 $$
|\partial^{\alpha}_{\xi} \partial^{\beta}_{x} \tilde{\lambda}_{\ell}(x,\xi)| \leq M_{\ell} C_{\alpha,\beta} |\xi|^{-|\alpha|}, \quad \ell=1,2,
$$ and 
$$
\sum_{j=1}^{n} \xi_j \partial_{x_j} \tilde{\lambda}_{\ell}(x,\xi) \leq -M_{\ell} \langle x \rangle^{-\ell} \chi\left( \frac{\langle x \rangle}{|\xi|} \right) |\xi|, \qquad \ell=1,2,
$$
where $\chi \in \mathcal{C}^{\infty}_{c}(\R)$ such that $\chi(t) = 0$ for $|t| > 1$, $\chi(t) = 1$ for $|t| < \frac{1}{2}$, $t \chi'(t) \leq 0$ and $0 \leq \chi \leq 1$.

For a large parameter $h \geq 1$ to be chosen later on, we consider 
\begin{equation}\label{lambdan}
	\lambda_{\ell}(x,\xi) = \tilde{\lambda}_\ell(x,\xi) (1-\chi)\left( \frac{ |\xi| }{ h } \right), \qquad \ell=1,2.
\end{equation}
Then, since $\langle \xi \rangle_{h} = \sqrt{ h^2 + |\xi|^2 } \leq \sqrt{5}h$ on the support of $(1-\chi)(h^{-1}|\xi|)$, we have
$$
|\partial^{\alpha}_{\xi} \partial^{\beta}_{x} \lambda_{\ell}(x,\xi)| \leq
\begin{cases}
 M_{1} C_{\alpha} \log(\langle \xi \rangle_h) \langle \xi \rangle^{-|\alpha|}_{h}, \quad \ell = 1, \beta = 0,\\
 M_{2} C_{\alpha} \langle \xi \rangle^{-|\alpha|}_{h}, \quad \ell=2, \beta = 0, \\
 M_{\ell} C_{\alpha,\beta} \langle \xi \rangle^{-|\alpha|}_{h}, \quad \ell= 1,2, \beta \neq 0,
 \end{cases}
$$
where the constants $C_\alpha$ and $C_{\alpha,\beta}$ do not depend on $M_{\ell}$ and on $h$. To define the change of variable, we shall use the symbols $e^{\lambda_\ell(x,\xi)}, \ell =1,2.$
The following estimates follow by a standard application of Fa\`a di Bruno formula:
$$
|\partial^{\alpha}_{\xi}e^{\pm \lambda_1(x,\xi)}| \leq C_{\alpha}e^{M_1}  (\log \langle \xi \rangle_h)^{|\alpha|} \langle \xi \rangle^{- |\alpha|}_{h} e^{\pm \lambda_1(x,\xi)},
$$
$$
|\partial^{\alpha}_{\xi}\partial^{\beta}_{x}e^{\pm \lambda_1(x,\xi)}| \leq C_{\alpha,\beta} e^{M_1} \langle \xi \rangle^{- |\alpha|}_{h} e^{\pm \lambda_1(x,\xi)}.
$$
On the other hand,
$$
e^{\pm \lambda_1}(x,\xi) \leq e^{M_1C_{0} \log \langle \xi \rangle_h} \leq \langle \xi \rangle^{C_0M_1}.
$$
Hence it follows that there exists a number $c > 0$ such that
$$
e^{\pm \lambda_1} \in S^{cM_1}(\R^{2n})
$$
and 
$$
|\partial^{\alpha}_{\xi}\partial^{\beta}_{x}e^{\pm \lambda_1(x,\xi)}| \leq C_{\alpha,\beta} e^{M_1} \langle \xi \rangle^{cM_1 - |\alpha|}_{h}.
$$
Concerning $e^{\lambda_2(x,\xi)}$ we have that it belongs to $S^0(\R^{2n}).$

We shall use the operators $e^{\lambda_\ell}(x,D)$ to handle the first order terms of the operator $S_{\varepsilon,s}$ defined in \eqref{o_toco}. These terms  are given by
\begin{align}\label{first_order_term_1}
	\sum_{j=1}^{n} c_{j,\varepsilon}(t,x) D_{x_j}
\end{align}
and
\begin{align}\label{first_order_term_2}
	2is \langle x \rangle^{-1} \sum_{j=1}^{n}\frac{x_j}{\langle x \rangle} D_{x_j}.
\end{align}
Since $c_{j,\varepsilon}(t,x)$ decays faster than $\langle x \rangle^{-1}$, we have that \eqref{first_order_term_1} does not produce any loss of Sobolev regularity. On the other hand, the term \eqref{first_order_term_2} decays exactly as $\langle x \rangle^{-1}$, so it produces a loss of regularity, but this loss will be independent on the parameter $\varepsilon$, since \eqref{first_order_term_2} does not depend on $\varepsilon$. Our idea is then to use $e^{\lambda_{2}}(x,D)$ to handle the term \eqref{first_order_term_1} and $e^{\lambda_{1}}(x,D)$ to control the new first order term \eqref{first_order_term_2} produced by the conjugation by $\langle x \rangle^{s}$. Namely, we shall consider a change of variable of the form
\begin{equation}\label{auxiliary_important_CP}
	\begin{cases}
		e^{\lambda_2} \circ e^{\lambda_1} \circ S_{\varepsilon,s} \circ \{e^{\lambda_1}\}^{-1} \circ \{e^{\lambda_2}\}^{-1} \circ e^{\lambda_2} \circ e^{\lambda_1} u(t,x) = e^{\lambda_2} \circ e^{\lambda_1}\tilde{f}(t,x), \\
		e^{\lambda_{2}} \circ e^{\lambda_1} u(0,x) = e^{\lambda_2} \circ e^{\lambda_1} \tilde{g}(x).
	\end{cases}
\end{equation}

\begin{Rem}
	Notice that in the conjugation the leading term is $e^{\lambda_1}(x,D)$. However,  we need to perform two conjugations in order to avoid a loss of Sobolev regularity depending on $\varepsilon$ which would appear if we consider the sole transformation $e^{\lambda_1}(x,D)$.
\end{Rem}

In the remaining part of the present section we shall derive a priori energy estimates to the auxiliary problem \eqref{auxiliary_important_CP}, see Proposition \ref{sex_pistols}. Then the proof of Propostion \ref{pompem} will be consequence of the derived energy estimates and the mapping properties of $e^{\lambda_1}(x,D)$ and $e^{\lambda_2}(x,D)$.

\subsection{Invertibility of $e^{\lambda_1}(x,D)$ and conjugation theorem}
\

The reverse operator $^{R}\{e^{\lambda_1}(x,D)\}$ was introduced in \cite{KW} (see Proposition $2.13$) as the transpose operator of $e^{\lambda_1}(x,-D)$. Since $\lambda_1$ is real-valued, in this particular case, the reverse operator coincides with the $L^2$ adjoint of $e^{\lambda_1}(x,D)$. We have that 
$$
e^{\lambda_1}(x,D) \circ\, ^{R}\{e^{-\lambda_1}(x,D)\} = \mu(x,D),
$$
where 
$$
\mu(x,\xi) = Os- \iint e^{-iy\eta} e^{\lambda_1(x, \xi+\eta) - \lambda_1(x+y,\xi+\eta)} dy\dslash\eta.
$$
So, Taylor's formula and usual computations give
\begin{align}\label{blood_brothers}
	\mu(x,\xi) = \sum_{|\alpha| < N} \frac{1}{\alpha!} \partial^{\alpha}_{\xi}\{e^{\lambda_1(x,\xi)} D^{\alpha}_{x} e^{-\lambda_1(x,\xi)}\} + r_{N}(x,\xi),
\end{align}
where 
\begin{align*}
	r_N(x,\xi) = N \sum_{|\gamma| = N} \int_0^1 (1-\theta)^{N-1}  Os- \iint e^{-iy\eta} \partial^{\alpha}_{\xi} \{e^{\lambda_1(x, \xi+\theta\eta)} D^{\gamma}_{x}e^{-\lambda_1(x+y,\xi+\theta\eta)} \} dy\dslash\eta \, d\theta.
\end{align*}
The advantage to use the reverse operator is that in the above asymptotic formula the terms
$$
\partial^{\alpha}_{\xi}\{e^{\lambda_1(x,\xi)} D^\alpha_x e^{-\lambda_1(x,\xi)}\}
$$
appear, and they are of order exactly $-|\alpha|$ (i.e. the $x$-derivative in the above asymptotic formula allow us to avoid the $\log$ growth appearing in the symbol estimates of $\lambda_1$). The symbols $e^{\pm \lambda_1}$ are of finite order, so we can estimate $r_N$ in the following way
$$
|\partial^{\alpha}_{\xi} \partial^{\beta}_{x} r_{N}(x,\xi)| \leq C_{\alpha, \beta, N, M_1,n} \langle \xi \rangle^{2cM_1 - N}_h.
$$
Since \eqref{blood_brothers} holds for any $N \in \N$, it follows that 
$$
e^{\lambda_1}(x,D) \circ\, ^{R}\{e^{-\lambda_1}(x,D)\} = I - \textrm{op}\left(i\sum_{j=1}^{n} \partial_{\xi_j} \{\partial_{x_j} \lambda_1(x,\xi)\} \right) + r_{-2}(x,D) = I + r_{-1}(x,D),
$$
where $r_{-2}$ has order $-2$. Hence, if we assume that $h \geq h_0(M_1)$ we obtain that 
\begin{eqnarray} \nonumber
\{e^{\lambda_1}(x,D) \}^{-1} &=&  ^{R}\{e^{-\lambda_1}(x,D)\} \circ \sum_{j \geq 0} (-r(x,D))^j \\ &=& ^{R}\{e^{-\lambda_1}(x,D)\} \circ \textrm{op}\left( 1 - i \sum_{j=1}^{n} \partial_{\xi_j}\partial_{x_j}\lambda_1(x,\xi) + r_{-2}(x,\xi) \right) , \label{alessia}
\end{eqnarray}
where $\sum_{j \geq 0} (-r)^j$ has symbol of order zero and $r_{-2}$ has symbol of order $-2$ satisfying 
$$
|\partial^{\alpha}_{\xi} \partial^{\beta}_{x} r_{-2}(x,\xi)| \leq C_{\alpha, \beta,M_1, n} \langle \xi \rangle^{-2-|\alpha|}_{h},
$$
for some $C > 0$ independent on $M_1$ and $h$ (cf. Theorem $I.1$ at page $372$ of \cite{Kumano-Go}).

For a $p \in S^{m}(\R^{2n})$ such that 
$$
|\partial^{\alpha}_{\xi} \partial^{\beta}_{x} p(x,\xi)| \leq C_{\alpha, \beta} \langle \xi \rangle^{m-|\alpha|}_{h},
$$
we consider
$$
|p|^{(m)}_{\ell} = \max_{|\alpha|, |\beta| \leq \ell} \sup_{x,\xi \in \R^n} |\partial^{\alpha}_{\xi} \partial^{\beta}_{x} p(x,\xi)| \langle \xi \rangle^{-m+|\alpha|}_{h}.
$$
Thus
$$
|\partial^{\alpha}_{\xi} \partial^{\beta}_{x} p(x,\xi)| \leq |p|^{(m)}_{|\alpha+\beta|} \langle \xi \rangle^{m-|\alpha|}_{h}.
$$
Now we need to understand the composition 
$
e^{\lambda_1}(x,D) \circ p(x,D) \circ\, \{e^{\lambda_1}(x,D)\}^{-1} .
$
This is done in two steps in view of \eqref{alessia}. The next theorem is a direct consequence of Theorem \ref{Hotel_california}.

\begin{Th}\label{quase_sem_querer}
 	For any $N \in \N$ the symbol of $e^{\lambda_1}(x,D)\circ p(x,D) \circ\, ^{R}\{ e^{-\lambda_1}(x,D)\}$ is given by
 	\begin{equation} \label{symbolalessia}
 	p(x,\xi)+\sum_{1\leq |\alpha+\beta|<N} \frac{1}{\alpha!\beta!} \partial^{\beta}_{\xi} \{ \partial^{\alpha}_{\xi}e^{\lambda_1(x,\xi)} D^{\alpha}_{x}p(x,\xi) D^{\beta}_{x}e^{-\lambda_1(x,\xi)} \} + r_{N}(x,\xi),
 	\end{equation}
 	where 
 	$$
 	|\partial^{\alpha}_{\xi} \partial^{\beta}_{x} r_{N}(x,\xi)| \leq C_{\alpha, \beta, N, M_1, n} |p|^{(m)}_{|\alpha+\beta|+N+J} \langle \xi \rangle_h^{2cM_1 + m - N},
 	$$
 	$J$ being a natural number depending on $M_1, m$ and on the dimension $n$.
\end{Th}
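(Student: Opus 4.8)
The plan is to obtain \eqref{symbolalessia} by reducing to the general symbolic composition statement of Appendix~\ref{pseudodifferential_operators}, Theorem~\ref{Hotel_california}, applied to the three factors $e^{\lambda_1}(x,D)$, $p(x,D)$ and ${}^{R}\{e^{-\lambda_1}(x,D)\}$, and then rearranging the resulting asymptotic series. Concretely, I would proceed in two composition steps. First, compose $e^{\lambda_1}(x,D)\circ p(x,D)$: since $e^{\lambda_1}\in S^{cM_1}(\R^{2n})$ is of finite order, the standard symbolic calculus (as in \cite{Kumano-Go}) yields a symbol $q(x,\xi)$ of order $cM_1+m$ with asymptotic expansion $q\sim\sum_{\alpha}\frac{1}{\alpha!}\partial^{\alpha}_{\xi}e^{\lambda_1}(x,\xi)\,D^{\alpha}_{x}p(x,\xi)$. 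Second, compose $q(x,D)\circ {}^{R}\{e^{-\lambda_1}(x,D)\}$: repeating verbatim the computation that produced $\mu$ in \eqref{blood_brothers}, but with the general symbol $q$ in place of $e^{\lambda_1}$, gives the expansion $\sum_{\beta}\frac{1}{\beta!}\partial^{\beta}_{\xi}\{q(x,\xi)\,D^{\beta}_{x}e^{-\lambda_1}(x,\xi)\}$ modulo a controlled remainder. Substituting the first expansion into the second and using the Leibniz rule for $\partial^{\beta}_{\xi}$, the principal terms reorganise precisely into $p(x,\xi)+\sum_{1\le|\alpha+\beta|<N}\frac{1}{\alpha!\beta!}\partial^{\beta}_{\xi}\{\partial^{\alpha}_{\xi}e^{\lambda_1}\,D^{\alpha}_{x}p\,D^{\beta}_{x}e^{-\lambda_1}\}$, the term $\alpha=\beta=0$ being $p(x,\xi)$.

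It then remains to collect the various remainders into a single $r_N$ and to estimate it, which is the heart of the matter. Each remainder is a Taylor remainder written as an oscillatory integral with an extra parameter $\theta\in[0,1]$, exactly of the shape of the $r_N$ displayed after \eqref{blood_brothers}; I would estimate it by the same procedure of integration by parts in the phase variables. Here the two structural features of the symbols $e^{\pm\lambda_1}$ must be exploited: first, that they are of finite order $cM_1$, so that the oscillatory integrals converge after finitely many integrations by parts and the product $e^{\lambda_1}\cdot e^{-\lambda_1}$ contributes the two powers $\langle\xi\rangle_h^{2cM_1}$; and second — crucially — that applying $\partial_x$ to $e^{\pm\lambda_1}$ does \emph{not} reproduce the logarithmic loss present in the estimates of $\lambda_1$ itself, only the order $cM_1$. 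Tracking the number of derivatives falling on $p$ through both composition steps, one finds that $p$ is differentiated at most $|\alpha+\beta|+N+J$ times, with $J$ depending only on $M_1$, $m$ and $n$, so that the remainder depends \emph{linearly} on the single seminorm $|p|^{(m)}_{|\alpha+\beta|+N+J}$; together with the gain $\langle\xi\rangle_h^{-N}$ from the truncation this yields $|\partial^{\alpha}_{\xi}\partial^{\beta}_{x}r_N|\le C_{\alpha,\beta,N,M_1,n}\,|p|^{(m)}_{|\alpha+\beta|+N+J}\,\langle\xi\rangle_h^{2cM_1+m-N}$. One also checks, as for \eqref{alessia}, that the constants do not depend on $h$.

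I expect the main obstacle to be precisely this remainder bookkeeping: one must at the same time pin down the sharp order $2cM_1+m-N$ (rather than some cruder exponent), isolate the linear dependence on a single explicit seminorm of $p$ — it is this feature that later makes the $\varepsilon$-dependence of the conjugated operator $S_{\varepsilon,s}$ controllable, when $p$ is replaced by an $\varepsilon$-dependent symbol — and verify the $h$-independence of all constants. Since all of this is already assembled in Theorem~\ref{Hotel_california}, in practice the argument reduces to verifying that the hypotheses of that theorem hold for $e^{\lambda_1}$ and $p$ and then performing the purely combinatorial reindexing of the double sum over $(\alpha,\beta)$, which is routine.
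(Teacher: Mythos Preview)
Your proposal is correct and matches the paper's own argument: the paper simply states that Theorem~\ref{quase_sem_querer} is a direct consequence of Theorem~\ref{Hotel_california}, and your two-step composition (standard calculus for $e^{\lambda_1}\circ p$, then the reverse-operator expansion as in \eqref{blood_brothers} for the composition with ${}^{R}\{e^{-\lambda_1}\}$) is exactly how that ``direct consequence'' is unpacked. Your remarks on the remainder bookkeeping --- the finite order $cM_1$ of $e^{\pm\lambda_1}$, the absence of the logarithmic loss once an $x$-derivative falls on $e^{\pm\lambda_1}$, the linear dependence on a single seminorm of $p$, and the $h$-independence of the constants --- are precisely the points one has to check, and they are all correct.
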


	Theorem \ref{quase_sem_querer} implies the following: if we know that $p_\varepsilon$ satisfies
	$$
	|\partial^{\alpha}_{\xi} \partial^{\beta}_{x} p_\varepsilon(x,\xi)|	\leq C_{\alpha, \beta} \varepsilon^{-N_1 - |\beta|} \langle \xi \rangle^{1-|\alpha|}_{h},
	$$
	then taking $N(M_1) = \lfloor 2cM_1 + 1 \rfloor + 1$ we get that $r_N$
 is a symbol of order zero and satisfies 
	\begin{equation}\label{needles_and_pins}
	|\partial^{\alpha}_{\xi} \partial^{\beta}_{x} r_{N}(x,\xi)| \leq C_{\alpha, \beta, M_1, n} \varepsilon^{-N_1 - |\beta| - J(M_1,n)} \langle \xi \rangle_h^{-|\alpha|},
	\end{equation}
	where $J$ is a natural number depending on $M_1$ and $n$. Thus, \eqref{symbolalessia} becomes
	$$
	p_\varepsilon(x,\xi) + \underbrace{\sum_{1\leq |\alpha+\beta|<N} \frac{1}{\alpha!\beta!} \partial^{\beta}_{\xi} \{ \partial^{\alpha}_{\xi}e^{\lambda_1(x,\xi)} D^{\alpha}_{x}p_{\varepsilon}(x,\xi) D^{\beta}_{x}e^{-\lambda_1(x,\xi)} \}}_{q_{\varepsilon}(x,\xi)} + r_{0}(x,\xi),
	$$
	where $r_0$ satisfies \eqref{needles_and_pins}. Now note that we can write $q_\varepsilon$ in the following way:
	\begin{align*}
	q_{\varepsilon} &= \sum_{j=1}^{n} \partial_{\xi_j} \lambda_1 D_{x_j} p_{\varepsilon} -\sum_{j=1}^{n} \partial_{\xi_j}\{p_\varepsilon D_{x_j}\lambda_1\}  + \sum_{2 \leq |\alpha+\beta|<N} \frac{1}{\alpha!\beta!} \partial^{\beta}_{\xi} \{ \partial^{\alpha}_{\xi}e^{\lambda_1(x,\xi)} D^{\alpha}_{x}p_{\varepsilon}(x,\xi) D^{\beta}_{x}e^{-\lambda_1(x,\xi)} \} \\
	&= \sum_{j=1}^{n} \partial_{\xi_j} \lambda_1 D_{x_j} p_{\varepsilon} + r_0,
	\end{align*}
	for a new zero order term $r_0$ satisfying an estimate like \eqref{needles_and_pins}. Hence 
	\begin{equation}\label{palhas_do_coqueiro}
	e^{\lambda_1}(x,D) \circ p_\varepsilon(x,D) \circ\, ^{R} \{e^{-\lambda_1}(x,D)\} = p_\varepsilon(x,D) + \textrm{op}\left(\sum_{j=1}^{n} \partial_{\xi_j} \lambda_1 D_{x_j} p_{\varepsilon}\right) + r_{0}(x,D).
	\end{equation}
	Since
	$
	\sum\limits_{j=1}^{n} \partial_{\xi_j} \lambda_1 D_{x_j} p_{\varepsilon}
	$
	behaves like $\varepsilon^{-N_1-1} \log\langle \xi \rangle_{h}$, it cannot be regarded as a zero order term.

\subsection{Invertibility of $e^{\lambda_2}(x,D)$}
\

\vskip0.2cm

Here we just report results derived in \cite{AACG} at Subsection $6.1$.

\begin{Lemma}\label{die_young_sabbath}
	For all $h \geq h_0(M_2,n) := A e^{(2C+1)M_2}$, for some $A, C > 0$ independent on $M_2$, the operator $e^{\lambda_2}(x,D)$ is invertible and we have
	$$
	\left(e^{\lambda_2}(x,D)\right)^{-1} = e^{-\lambda_2}(x,D) \circ \sum_{j \geq 0} (-r(x,D))^{j}, 
	$$
	where 
	$$
	r(x,\xi) = i\sum_{k=1}^{n} \partial_{\xi_k} \lambda_2(x,\xi)\partial_{x_k}\lambda_2(x,\xi) + r_{-2}(x,\xi).
	$$ and 
	\begin{equation}\label{the_writing_on_the_wall}
		r_{-2}(x,\xi) = \sum_{|\gamma|=2} \frac{2}{\gamma!} Os - \iint e^{-y\eta} \int_{0}^{1} (1-\theta) \partial^{\gamma}_{\xi} e^{\lambda_2(x,\xi+\theta\eta)} d\theta D^{\gamma}_{x} e^{-\lambda_2(x+y,\xi)} dy\dslash\eta.
	\end{equation}
	Morever, 
	$$
	\sum_{j \geq 0} (-r(x,D))^{j} = s(x,D),
	$$
	where $s(x,\xi)$ is a zero order symbol and its seminorms do not depend on $h$ and on $M_2$.
\end{Lemma}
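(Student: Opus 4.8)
The plan is to construct $\left(e^{\lambda_2}(x,D)\right)^{-1}$ by the standard ``exponential conjugation plus Neumann series'' scheme, exploiting that $e^{\pm\lambda_2}\in S^0(\R^{2n})$ and, crucially, that these symbols obey the sharpened bounds $|\partial^\alpha_\xi\partial^\beta_x e^{\pm\lambda_2}|\le C_{\alpha,\beta}e^{C_0M_2}\langle\xi\rangle_h^{-|\alpha|}$, which follow from $|\partial^\alpha_\xi\partial^\beta_x\lambda_2|\le M_2C_{\alpha,\beta}\langle\xi\rangle_h^{-|\alpha|}$ via the Fa\`a di Bruno formula. First I would compute the composition $e^{\lambda_2}(x,D)\circ e^{-\lambda_2}(x,D)$ using the symbolic calculus recalled in Appendix \ref{pseudodifferential_operators}: its symbol is
$$
\mu(x,\xi)={\rm Os}-\iint e^{-iy\eta}\,e^{\lambda_2(x,\xi+\eta)-\lambda_2(x+y,\xi)}\,dy\,\dslash\eta,
$$
and a Taylor expansion in $\eta$ of $e^{\lambda_2(x,\xi+\eta)}$ up to order two, followed by the usual integration by parts in $y$, yields
$$
e^{\lambda_2}(x,D)\circ e^{-\lambda_2}(x,D)=I+r(x,D),\qquad r=i\sum_{k=1}^n\partial_{\xi_k}\lambda_2\,\partial_{x_k}\lambda_2+r_{-2},
$$
with $r_{-2}$ the explicit order-$(-2)$ oscillatory-integral remainder displayed in the statement. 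The bookkeeping point is that $\partial_{\xi_k}\lambda_2$ has order $-1$ while $\partial_{x_k}\lambda_2$ has only order $0$, so $r\in S^{-1}(\R^{2n})$ and every seminorm $|r|^{(-1)}_\ell$ is bounded by a quantity of the form $C_{\ell,n}M_2^2e^{cM_2}$.

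Next I would make $r(x,D)$ a contraction on $L^2$. By the Calder\'on--Vaillancourt theorem applied with the $h$-weight (so that an extra factor $\langle\xi\rangle_h^{-1}\le h^{-1}$ is available), $\|r(x,D)\|_{L^2\to L^2}\le C_n\,\max_{|\alpha|,|\beta|\le\kappa_n}\|\langle\xi\rangle_h\,\partial^\alpha_\xi\partial^\beta_x r\|_{L^\infty}\cdot h^{-1}\le C'_n M_2^2 e^{cM_2}h^{-1}$; hence for $h\ge h_0(M_2,n)=Ae^{(2C+1)M_2}$ with $A,C$ absolute one has $\|r(x,D)\|\le 1/2$, so $I+r(x,D)$ is invertible on $L^2$ with $(I+r(x,D))^{-1}=\sum_{j\ge0}(-r(x,D))^j$. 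It then follows immediately that $e^{-\lambda_2}(x,D)\circ\sum_{j\ge0}(-r(x,D))^j$ is a right inverse of $e^{\lambda_2}(x,D)$; running the same computation with $\lambda_2$ replaced by $-\lambda_2$ produces a left inverse, and a left inverse and a right inverse must coincide, giving the claimed formula for $\left(e^{\lambda_2}(x,D)\right)^{-1}$.

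Finally, to identify $s(x,D):=\sum_{j\ge0}(-r(x,D))^j$ with a genuine pseudodifferential operator of order $0$ whose symbol seminorms are independent of both $h$ and $M_2$, I would show that the class of order-zero symbols with $h$-dependent seminorms is quantitatively stable under composition against $S^{-1}$: one has $(-r(x,D))^j=p_j(x,D)$ with $p_j\in S^{-j}(\R^{2n})$ and $|p_j|^{(-j)}_\ell\le\big(C_{\ell,n}\,|r|^{(-1)}_{\ell+J}\big)^j$ for a dimension-dependent index loss $J$; since $|r|^{(-1)}_{\ell+J}\le C M_2^2e^{cM_2}h^{-1}\le 1/2$ once $h\ge h_0(M_2,n)$, the series $\sum_j p_j$ converges in each seminorm to a symbol of order $0$ whose bounds are governed by a geometric series of ratio $\le1/2$ and are therefore absolute, and an asymptotic-summation argument realises $\sum_j p_j(x,D)$ as a single properly supported operator $s(x,D)$. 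The only genuinely delicate step is this last one: one must track precisely how the $h$-weight $\langle\xi\rangle_h$ and the exponential constants $e^{cM_2}$ propagate through iterated symbol compositions, so that the smallness $h^{-1}$ gained from a single factor is not destroyed by the infinite sum. This is exactly the content of Subsection $6.1$ of \cite{AACG}, to which one may refer for the full seminorm computations.
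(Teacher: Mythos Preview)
Your proposal is correct and follows exactly the standard scheme; note that the paper itself gives no proof of this lemma, stating only ``Here we just report results derived in \cite{AACG} at Subsection $6.1$'', so your sketch is in fact more detailed than what appears here and is fully consistent with the cited argument.

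One small caveat: your bound $|p_j|^{(-j)}_\ell\le\big(C_{\ell,n}|r|^{(-1)}_{\ell+J}\big)^j$ with a \emph{fixed} index loss $J$ is optimistic as written---iterating the binary composition estimate of Theorem \ref{Hotel_california} would accumulate index losses of order $j$. The way this is handled in \cite{AACG} (and in the analogous computation for $e^{\lambda_1}$ in the present paper, where Theorem I.1 on p.~372 of \cite{Kumano-Go} is invoked) is via a multi-product oscillatory-integral formula that controls the $j$-fold composition directly, so that the factor $(h^{-1}e^{CM_2})^j$ is extracted cleanly with only an $\ell$-dependent (not $j$-dependent) seminorm level. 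Since you already flag this as the delicate step and defer to \cite{AACG} for the full computation, your proposal is entirely in line with the paper's own treatment.
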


We need to write $s(x,D)$ in the following convenient way:
\begin{align*}
	s(x, D) &= I - r(x,D) + \sum_{j \geq 2} (-r(x,D))^{j} = I - r(x,D) + (r(x,D))^{2} s(x,D)  \\
	&= I - \textrm{op}\left( i\sum_{k=1}^{n} \partial_{\xi_k} \lambda_2(x,\xi)\partial_{x_k}\lambda_2(x,\xi) \right) - r_{-2}(x,D) +  (r(x,D))^{2} s(x,D).
\end{align*}
So, 
\begin{equation}\label{hell_on_Earth}
	s(x,\xi) = 1 - i\sum_{k=1}^{n} \partial_{\xi_k} \lambda_2(x,\xi)\partial_{x_k}\lambda_2(x,\xi) + s_{-2}(x,\xi),
\end{equation}
where $s_{-2}(x,\xi)$ has order $-2$ and satifies 
$$
|\partial^{\alpha}_{\xi}\partial^{\beta}_{x} s_{-2}(x,\xi)| \leq C_{\alpha,\beta,n} e^{CM_2} \langle \xi \rangle^{-2-|\alpha|}_{h},
$$
where the constants $C_{\alpha,\beta,n}$ and $C$ do not depend on $M_2$ and on $h$.

\subsection{Conjugation of $S_{\varepsilon,s}$ by $e^{\lambda_1}(x,D)$}
\

\vskip0.2cm
We recall that 
\begin{align*}
	S_{\varepsilon, s} &:= \langle x \rangle^{s} S_{\varepsilon} \langle x \rangle^{-s} \\
	&=	D_t + \sum_{j=1}^{n} D^{2}_{x_j} + \sum_{j=1}^{n} c_{j,\varepsilon}(t,x) D_{x_j} + 2is \langle x \rangle^{-1} \sum_{j=1}^{n}\frac{x_j}{\langle x \rangle} D_{x_j} + c_{0,\varepsilon, s}(t,x),
\end{align*}
where 
\begin{align*}
	c_{0,\varepsilon, s}(t,x) = c_{0,\varepsilon}(t,x) + is \langle x \rangle^{-1} \sum_{j=1}^{n} c_{1,j}(t,x) \frac{x_j}{\langle x \rangle} - s(s+2) \langle x \rangle^{-2} \frac{|x|^{2}}{\langle x \rangle^{2}} + ns \langle x \rangle^{-2}. 
\end{align*}
Let us set
$$
S_{\varepsilon, s, \lambda_1} = e^{\lambda_1} \circ S_{\varepsilon, s} \circ \{e^{\lambda_1}\}^{-1}.
$$
Applying Theorem \ref{quase_sem_querer} to $\sum\limits_{j=1}^{n} D^{2}_{x_j}$ and formula \eqref{palhas_do_coqueiro} to the first order terms of $S_{\varepsilon, s}$ we get
\begin{align}\label{the_clasnman_3}
	S_{\varepsilon,s,\lambda_1} &= D_t + \sum_{j=1}^{n} D^{2}_{x_j} +2i\sum_{j=1}^{n} (\partial_{x_j}\lambda_1)(x,D) D_{x_j}   \\
	&+\sum_{j=1}^{n} c_{j,\varepsilon}(t,x)D_{x_j}+ 2is \langle x \rangle^{-1} \sum_{j=1}^{n}\frac{x_j}{\langle x \rangle} D_{x_j} \nonumber \\ 
	&+ \textrm{op}\left( \sum_{j=1}^{n} \partial_{\xi_j} \lambda_1 D_{x_j} p_{\varepsilon}\right) + d_{0,\varepsilon,s}(t,x,D), \nonumber
\end{align}
where
\begin{equation}\label{pepsilon}
	p_{\varepsilon}(t,x,\xi) =
	\sum_{j=1}^{n} c_{j,\varepsilon}(t,x)\xi_j + 2is \langle x \rangle^{-1} \sum_{j=1}^{n}\frac{x_j}{\langle x \rangle} \xi_j,
\end{equation}
and $d_{0,\varepsilon, s}$ is a zero order term satisfying
\begin{equation}\label{the_clasnman_5}
	|\partial^{\alpha}_{\xi}\partial^{\beta}_{x} d_{0,\varepsilon,s}(t,x,\xi)| \leq C_{\alpha,\beta,M_1,n,s} \varepsilon^{-J(N,M_1,n)-|\beta|} \langle \xi \rangle^{-|\alpha|}_{h}.
\end{equation}
In order to simplify the expression for $S_{\varepsilon,s,\lambda_1}$ we set
$$
p_{\varepsilon, \log} (t,x,\xi) = \sum_{j=1}^{n} \partial_{\xi_j} \lambda_1(x,\xi) D_{x_j} p_{\varepsilon}(t,x,\xi).
$$
We note that 
$$
|\partial^{\alpha}_{\xi}\partial^{\beta}_{x} p_{\varepsilon, \log} (t,x,\xi)| \leq M_1C_{\alpha,\beta,s,n} \varepsilon^{-N-|\beta|-1} \log \langle \xi \rangle_h \langle \xi \rangle^{-|\alpha|}_h,
$$
and we may write
\begin{align}\label{the_clasnman_4}
	S_{\varepsilon,s,\lambda_1} &= D_t + \sum_{j=1}^{n} D^{2}_{x_j} + \sum_{j=1}^{n} c_{j,\varepsilon}(t,x)D_{x_j} \\
	&+2is \langle x \rangle^{-1} \sum_{j=1}^{n}\frac{x_j}{\langle x \rangle} D_{x_j} + 2i\sum_{j=1}^{n} (\partial_{x_j}\lambda_1)(x,D) D_{x_j} \nonumber \\
	&+ p_{\varepsilon,\log}(t,x,D) + d_{0,\varepsilon,s}(t,x,D). \nonumber
\end{align}

\subsection{Conjugation of $S_{\varepsilon, s, \lambda_1}$ by $e^{\lambda_2}(x,D)$}
\

We shall denote 
$$
S_{\varepsilon, s, \lambda_1, \lambda_2} = e^{\lambda_2}(x,D) \circ S_{\varepsilon, s,\lambda_1} \circ \{e^{\lambda_2}(x,D)\}^{-1}.
$$
We recall that $e^{\lambda_2}$ has order zero. So, using  Lemma \ref{die_young_sabbath}, Theorem \ref{Hotel_california} and repeating similar ideas of the previous subsection we end up with
\begin{align}\label{the_clasnman}
	S_{\varepsilon,s,\lambda_1, \lambda_2} &= D_t + \sum_{j=1}^{n} D^{2}_{x_j} + \sum_{j=1}^{n} c_{j,\varepsilon}(t,x)D_{x_j} + 2i\sum_{j=1}^{n} (\partial_{x_j}\lambda_2)(x,D) D_{x_j} \\
	&+ 2is \langle x \rangle^{-1} \sum_{j=1}^{n}\frac{x_j}{\langle x \rangle} D_{x_j} + 2i\sum_{j=1}^{n} (\partial_{x_j}\lambda_1)(x,D) D_{x_j} \nonumber \\
	&+ p_{\varepsilon,\log}(t,x,D) + d_{0,\varepsilon,s}(t,x,D), \nonumber
\end{align}
for a new zero order term $d_{0,\varepsilon,s}(t,x,\xi)$ satisfying
\begin{equation}\label{the_clasnman_2}
	|\partial^{\alpha}_{\xi}\partial^{\beta}_{x} d_{0,\varepsilon,s}(t,x,\xi)| \leq C_{\alpha,\beta,M_1,n,s} \varepsilon^{-J(N,M_1,n) - |\beta|} e^{CM_2}\langle \xi \rangle^{-|\alpha|}_{h},
\end{equation}
where $J(N, M_1, n)$ is a natural number depending on $N, M_1$ and on the dimension and the constants $C_{\alpha,\beta,M_1,n,s}$, $C$ do not depend on $\varepsilon, M_2$ and $h$.

\subsection{Choices of $M_1$, $M_2$, $h$ and $L^2$ energy estimate}

Inserting \eqref{pepsilon} in  $p_{\varepsilon,\log}(t,x,\xi)$ we obtain
\begin{align*}
	p_{\varepsilon, \log}(t,x,\xi) &= \sum_{j=1}^{n} \sum_{\ell = 1}^{n} \partial_{\xi_j} \lambda_1(x,\xi) D_{x_j} c_{\ell,\varepsilon}(t,x) \xi_\ell 
	+ 2i \sum_{j=1}^{n} \sum_{\ell = 1}^{n} \partial_{\xi_j} \lambda_1(x,\xi) D_{x_{j}} \frac{x_\ell}{\langle x \rangle^{2}} \xi_\ell \\
	&= q_{1,\varepsilon}(t,x,\xi) + q_{2,\varepsilon}(t,x,\xi).
\end{align*}
Since $c_{j,\varepsilon}(t,x)$ are Schwartz functions in $x$ we obtain 
\begin{align*}
	|\partial^{\alpha}_{\xi} \partial^{\beta}_{x} q_{1,\varepsilon}(t,x,\xi)| &\leq M_1C_{\alpha, \beta, s, n} \varepsilon^{-N - |\beta| - 3} \log\langle \xi \rangle_h \langle \xi \rangle^{-|\alpha|}_{h} \langle x \rangle^{-2} \\
	&= M_1C_{\alpha, \beta, s, n } \varepsilon^{-N - |\beta| - 3} \langle \xi \rangle^{-1}_{h}\log\langle \xi \rangle_h \langle \xi \rangle^{1-|\alpha|}_{h}\langle x \rangle^{-2} \\
	&\leq M_1C_{\alpha, \beta, s, n} \varepsilon^{-N - |\beta| - 3} h^{-\frac{1}{2}} \langle \xi \rangle^{1-|\alpha|}_{h} \langle x \rangle^{-2}.
\end{align*}
Analogously
\begin{align*}
	|\partial^{\alpha}_{\xi} \partial^{\beta}_{x} q_{2,\varepsilon}(t,x,\xi)| \leq M_1C_{\alpha, \beta, s, n}  h^{-\frac{1}{2}} \langle \xi \rangle^{1-|\alpha|}_{h} \langle x \rangle^{-2}.
\end{align*}
So, we can consider $q_{1,\varepsilon}$ and $q_{2,\varepsilon}$ as symbols of order $1$ with decay $\langle x \rangle^{-2}$ and seminorms bounded by $M_1C_{\alpha, \beta, s, n}\varepsilon^{-N - |\beta| - 3}h^{-\frac{1}{2}}$.

	In order to derive an $L^2$ energy estimate for $S_{\varepsilon,\lambda}$ we write 
\begin{align}\label{seventh_son_of_a_seventh_son}
	iS_{\varepsilon,s,\lambda_1, \lambda_2} &= \partial_t + i\sum_{j=1}^{n} D^{2}_{x_j} \\ 
	&+ \sum_{j=1}^{n} ic_{j,\varepsilon}(t,x)D_{x_j} - 2\sum_{j=1}^{n} (\partial_{x_j}\lambda_2)(x,D) D_{x_j} \nonumber \\
	&-2s \langle x \rangle^{-1} \sum_{j=1}^{n}\frac{x_j}{\langle x \rangle} D_{x_j} - 2\sum_{j=1}^{n} (\partial_{x_j}\lambda_1)(x,D) D_{x_j} \nonumber \\
	&+iq_{1,\varepsilon}(t,x,D) + iq_{2,\varepsilon}(t,x,D) + id_{0,\varepsilon,s}(t,x,D). \nonumber
\end{align}
We also recall that 
\begin{align*}
		-2 \sum_{j=1}^{n} \xi_j \partial_{x_j} \lambda_{\ell}(x,\xi) \geq M_{\ell} \langle x \rangle^{-\ell} \chi\left( \frac{\langle x \rangle}{|\xi|} \right) (1-\chi)\left( \frac{ |\xi| }{ h } \right) |\xi|, \quad \ell = 1, 2.
\end{align*}
Note that if a symbol $c(x,\xi)$ of order $1$ decays like $\langle x \rangle^{-2}$, then we can split it as follows
\begin{align*}
	c(x,\xi) &= c(x,\xi)\chi\left( \frac{\langle x \rangle}{|\xi|} \right) (1-\chi)\left( \frac{ |\xi| }{ h } \right) \\
	&+ \underbrace{c(x,\xi) (1-\chi)\left( \frac{\langle x \rangle}{|\xi|} \right) (\chi + (1-\chi))\left( \frac{ |\xi| }{ h } \right) + c(x,\xi) \chi\left( \frac{\langle x \rangle}{|\xi|} \right) \chi\left( \frac{ |\xi| }{ h } \right)}_{\text{order zero}}. \\
\end{align*}
So, outside of the support of $\chi\left( \frac{\langle x \rangle}{|\xi|} \right) (1-\chi)\left( \frac{ |\xi| }{ h } \right)$ we have that
$$
iS_{\varepsilon,s,\lambda_1, \lambda_2} = \partial_t + i\sum_{j=1}^{n} D^{2}_{x_j} + \tilde{q}_{0}(t,x,D),
$$
where 
$$
	|\partial^{\alpha}_{\xi}\partial^{\beta}_{x} \tilde{q}_{0}(t,x,\xi)| \leq C_{\alpha,\beta,M_1,n,s} \varepsilon^{-J(N,M_1,n) - |\beta|} e^{CM_2}\langle \xi \rangle^{-|\alpha|}_{h}.
$$
On the other hand, on the support of $\chi\left( \frac{\langle x \rangle}{|\xi|} \right) (1-\chi)\left( \frac{ |\xi| }{ h } \right)$ we can take $M_1(s)$ and $M_2(\varepsilon)$ large in order to get that the symbols of the first order part are non-negative. Indeed, we have
\begin{align*}
	-\chi\left( \frac{\langle x \rangle}{|\xi|} \right) &(1-\chi)\left( \frac{ |\xi| }{ h } \right)\sum_{j=1}^{n} \Im\,c_{j,\varepsilon}(t,x)\xi_j - 2\sum_{j=1}^{n} (\partial_{x_j}\lambda_2)(x,\xi) \xi_j \nonumber \\
	&\geq  \left( M_2 - C(c_j)\varepsilon^{-N-2} \right) |\xi| \langle x \rangle^{-2} \chi\left( \frac{\langle x \rangle}{|\xi|} \right) (1-\chi)\left( \frac{ |\xi| }{ h } \right)
\end{align*}
and
\begin{align*}
	-\chi\left( \frac{\langle x \rangle}{|\xi|} \right) &(1-\chi)\left( \frac{ |\xi| }{ h } \right)2s \langle x \rangle^{-1} \sum_{j=1}^{n}\frac{x_j}{\langle x \rangle} \xi_j - 2\sum_{j=1}^{n} (\partial_{x_j}\lambda_1)(x,\xi) \xi_j \nonumber \\
	&+ \chi\left( \frac{\langle x \rangle}{|\xi|} \right) (1-\chi)\left( \frac{ |\xi| }{ h } \right) (-\Im\, q_{1,\varepsilon}(t,x,\xi) - \Im\,q_{2,\varepsilon}(t,x,D)) \\
	&\geq \left(  M_1 - C(s) - h^{-\frac{1}{2}}C_{s,n,M_1} \varepsilon^{-N-3} \right) |\xi| \langle x \rangle^{-1} \chi\left( \frac{\langle x \rangle}{|\xi|} \right) (1-\chi)\left( \frac{ |\xi| }{ h } \right).
\end{align*}
Hence, choosing 
$$
M_2(\varepsilon) = \frac{C(c_j)\varepsilon^{-N-2}}{2}, \quad M_1(s) = \frac{C(s)}{2}
$$
and taking $h$ large so that
$$
h^{-\frac{1}{2}} C_{s,n,M_1}\varepsilon^{-N-3} < \frac{C(s)}{2},
$$
we obtain that the real part of the first order symbols in \eqref{seventh_son_of_a_seventh_son} are all non-negative.

Hence we may apply sharp G{\aa}rding inequality to get the following energy estimate:
\begin{align}\label{neon_knight}
	\partial_{t} \|u(t)\|^{2}_{L^2} &= 2 Re\, \langle \partial_t u, u \rangle_{L^2} \\
	&\leq \|S_{\varepsilon,s,\lambda_1,\lambda_2} u\|^{2}_{L^2} + \|u\|^{2}_{L^2} + C_{s,n,M_1} \varepsilon^{-J(N, M_1, s)} e^{CM_2} \|u\|^{2}_{L^2} \nonumber.
\end{align}
To obtain \eqref{neon_knight} with general $m-$norms we apply the same ideas to the operator 
$$
\langle D_x \rangle^{m} S_{\varepsilon,s,\lambda_1,\lambda_2} \langle D_x \rangle^{-m}.
$$
which is almost equal to $S_{\varepsilon,s,\lambda_1,\lambda_2}$ except for a zero order term. So, 
\begin{align*}
	\partial_{t} \|u(t)\|^{2}_{H^m} &\leq \|S_{\varepsilon,s,\lambda_1,\lambda_2} u\|^{2}_{H^m} + \|u\|^{2}_{H^m} + C_{m,s,n,M_1} \varepsilon^{-J(N,M_1,s,m)} e^{CM_2} \|u\|^{2}_{H^m}.
\end{align*}
Gronwall's inequality then gives (recalling that $M_1$ depends only on $s$ and $M_2 = C\varepsilon^{-N-2}$)
\begin{align}\label{aces_high}
	\|u(t)\|^{2}_{H^m} &\leq C_{m,s,T,n}\exp \left\{ C_{T,m,n} e^{\varepsilon^{-J(N,s,m)}} \right\} \left( \|u(0)\|^{2}_{H^{m}} + \int_{0}^{t}\|S_{\varepsilon,s,\lambda_1,\lambda_2}(\tau) u(\tau)\|^{2}_{H^m} d\tau \right).
\end{align}

The next proposition is consequence of \eqref{aces_high}.

\begin{Prop}\label{sex_pistols}
	Let $f \in C([0,T]; H^m(\R^{n}))$ and $g \in H^{m}(\R^{n})$. There exists a unique solution $u$ in $C([0,T];H^{m}(\R^{n}))$ to the Cauchy problem 
	\begin{equation}\label{sign_of_the_cross1}
		\begin{cases}
			S_{\varepsilon,s,\lambda_1, \lambda_2} u(t,x) = f(t,x), \quad t \in [0,T], x \in \R^{n}, \\
			u(0,x) = g(x), \qquad \quad \qquad 	x \in \R^{n},
		\end{cases}
	\end{equation}
	and $u$ satisfies \eqref{aces_high}.
\end{Prop}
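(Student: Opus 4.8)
Given the a priori estimate \eqref{aces_high}, the proof follows the standard pattern for Schr\"odinger-type evolution equations (cf. \cite{KB,Kumano-Go}); I only outline the steps.

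\emph{Uniqueness} is immediate: if $u_1,u_2\in C([0,T];H^m(\R^n))$ both solve \eqref{sign_of_the_cross1}, then $w=u_1-u_2$ solves the same problem with vanishing data, and \eqref{aces_high} forces $\|w(t)\|^2_{H^m}\leq 0$ for all $t\in[0,T]$, i.e. $w\equiv 0$. Note also that the derivation of \eqref{aces_high} is invariant under time translation, so the same estimate holds with the integral taken over an arbitrary $[t_0,t]$ and $\|u(0)\|_{H^m}$ replaced by $\|u(t_0)\|_{H^m}$; this will be used for the time regularity.

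\emph{Existence} is obtained by vanishing viscosity. For $\delta\in(0,1]$ set $S^{(\delta)}:=S_{\varepsilon,s,\lambda_1,\lambda_2}-i\delta\sum_{j=1}^n D^2_{x_j}$, so that $iS^{(\delta)}=\partial_t+(\delta+i)\sum_{j=1}^n D^2_{x_j}+(\text{terms of order}\leq 1)$. The principal symbol $(\delta+i)|\xi|^2$ takes values in the sector $\{|\arg z|\leq\arctan(1/\delta)\}\subsetneq\{|\arg z|<\pi/2\}$, so $-(\delta+i)\sum_j D^2_{x_j}$ is sectorial and generates an analytic semigroup on $H^m(\R^n)$; since $c_{j,\varepsilon}\in C([0,T];\mathscr{S}(\R^n))$ and all the remaining operators appearing in \eqref{seventh_son_of_a_seventh_son} are of order $\leq 1$, they are continuous-in-$t$ perturbations of relative bound zero, and standard parabolic theory gives a unique $u^{(\delta)}\in C([0,T];H^m(\R^n))$ solving \eqref{sign_of_the_cross1} with $S^{(\delta)}$ in place of $S_{\varepsilon,s,\lambda_1,\lambda_2}$. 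The extra term $i\delta\sum_j D^2_{x_j}$ contributes to $\partial_t\|u^{(\delta)}(t)\|^2_{H^m}$ the non-positive quantity $-2\delta\sum_j\|D_{x_j}u^{(\delta)}(t)\|^2_{H^m}$, hence the sharp G{\aa}rding argument leading to \eqref{aces_high} applies to $u^{(\delta)}$ \emph{uniformly in} $\delta$. Thus $(u^{(\delta)})_\delta$ is bounded in $L^\infty([0,T];H^m(\R^n))$ and, reading $\partial_t u^{(\delta)}$ off the equation, also $(\partial_t u^{(\delta)})_\delta$ is bounded in $L^\infty([0,T];H^{m-2}(\R^n))$. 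A standard weak-compactness argument then yields a subsequence $u^{(\delta_k)}$ converging weakly-$*$ in $L^\infty([0,T];H^m)$ (and, for each $t$, $u^{(\delta_k)}(t)\rightharpoonup u(t)$ in $H^m$) to a function $u$ which, passing to the limit in the equation and in the initial condition, solves \eqref{sign_of_the_cross1} with $u(0)=g$; lower semicontinuity of the $H^m$-norm gives \eqref{aces_high} for $u$.

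\emph{From weak to strong continuity.} From the equation, $\partial_t u\in L^\infty([0,T];H^{m-2})$, so $u\in\mathrm{Lip}([0,T];H^{m-2})$ and hence $u\in C_w([0,T];H^m)$; it remains to prove that $t\mapsto\|u(t)\|_{H^m}$ is continuous. Upper semicontinuity from the right is given by the time-translated form of \eqref{aces_high}; upper semicontinuity from the left follows from the analogous estimate for the backward Cauchy problem, obtained by running the conjugation and sharp G{\aa}rding arguments of this section for the formal $L^2$-adjoint of $S_{\varepsilon,s,\lambda_1,\lambda_2}$ after the substitution $t\mapsto T-t$ (the structure of the first order terms, in particular the sign of their real parts, is preserved, so the argument is identical). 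Together with the weak lower semicontinuity of the norm this gives $t\mapsto\|u(t)\|_{H^m}\in C([0,T])$, hence $u\in C([0,T];H^m(\R^n))$. The heart of the proposition, namely the estimate \eqref{aces_high}, has already been established; the only mildly delicate point left is the \emph{uniform-in-$\delta$} solvability of the viscous problems, which is precisely why the regularising term was inserted with the sign making it dissipative, and therefore harmless, in the energy balance. (Alternatively, existence can be deduced from the backward estimate for the adjoint operator by the classical Hahn--Banach duality argument, the regularity being bootstrapped via \eqref{aces_high}.)
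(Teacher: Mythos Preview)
Your proof is correct and supplies precisely the details that the paper leaves implicit: in the paper the proposition is simply declared a ``consequence of \eqref{aces_high}'', i.e.\ of the a priori estimate, with the existence/uniqueness machinery taken for granted from the classical literature (\cite{KB,mizohata2014,Kumano-Go}). Your vanishing-viscosity argument, with the dissipative sign chosen so that the extra second-order term only improves the energy balance and the estimate \eqref{aces_high} holds uniformly in $\delta$, followed by weak compactness and the standard upgrade from $C_w$ to $C$ via the backward estimate for the adjoint, is one of the canonical ways to fill this gap and matches the spirit of the references the paper cites.
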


\begin{proof}[Proof of Proposition \ref{pompem}.]
	We observe that $e^{\lambda_2}(x,D): H^{m} \to H^{m}$ and $e^{\lambda_1}(x,D): H^m \to H^{m-\frac{c(s)}{2}}$, for some $c(s)$ depending on $s$. Hence $e^{\lambda_2}(x,D)$ does not produce any loss of regularity and $e^{\lambda_1}(x,D)$ produces a loss depending on $s$. Then, Proposition \ref{pompem} is a consequence of Proposition \ref{sex_pistols} and the mapping properties of $e^{\lambda_1}(x,D)$ and $e^{\lambda_2}(x,D)$.
\end{proof}

\smallskip


\section{Consistency with the classical theory in the case of regular coefficients}\label{Consistency with regular theory}
\

Suppose that the coefficients of the operator $S$ are $\mathscr{S}(\R^n)$ regular, more precisely:
%
%
$$
c_j \in C([0,T];\mathscr{S}(\R^{n})), \quad j = 0, 1, \ldots, n.
$$
In this situation the Cauchy problem \eqref{CPintro} with data $f \in C([0,T];\mathscr{S}(\R^{n}))$ and $g \in \mathscr{S}(\R^{n})$ admits a unique solution $u \in C([0,T]; \mathscr{S}(\R^{n}))$ satisfying an energy estimate like \eqref{national_acrobat} (with a constant which does not depend on $\varepsilon$). The goal of this section is to verify that the net obtained in Theorem \ref{apprn} converges to $u$ in the $\mathscr{S}(\R^{n})$ topology. 


Let $u_{\varepsilon}$ be the solution of the Cauchy problem \eqref{regularised_CP_true}. Then $u - u_{\varepsilon}$ solves the following Cauchy problem
\begin{equation}\label{reise_reise}
	\begin{cases}
		S\{u-u_{\varepsilon}\}(t,x) = (f-f_{\varepsilon}) + Q_{\varepsilon}u_{\varepsilon}(t,x), \quad t \in [0,T],\, x \in \R^{n}, \\
		\{u-u_{\varepsilon}\}(0,x) = (g-g_{\varepsilon}), \qquad \qquad \qquad \quad  x \in \R^{n},
	\end{cases}
\end{equation}
where 
$$
Q_{\varepsilon} = 
\sum_{j=1}^{n} (c_{j,\varepsilon}(t,x) - c_{j}(t,x))D_{x_j} + (c_{0,\varepsilon}(t,x)-c_0(t,x)).
$$
So, we have the following estimate: for any $m, s \in \N_0$
\begin{align*}\label{nebel}
	\|&\{u-u_{\varepsilon}\}(t)\|^{2}_{H^{m-c(s), s}} \leq  C_{m,s} \left\{ \|g-g_{\varepsilon}\|^{2}_{H^{m,s}} + \int_{0}^{t} \|(f-f_\varepsilon)(\tau) + Q_{\varepsilon} u_{\varepsilon}(\tau)\|^{2}_{H^{m,s}} d\tau  \right\} \\
	&\leq C_{m,s}\|g-g_{\varepsilon}\|^{2}_{H^{m,s}} + C_m \int_{0}^{t} \|(f-f_\varepsilon)(\tau)\|^{2}_{H^{m,s}}  d\tau 
	\\
	&+ C_{m,s} \int_{0}^{t} \sum_{j=1}^{n} \max_{|\alpha| \leq m} \sup_{x \in \R^{n}}|D^{\alpha}_{x}c_{j,\varepsilon}(t,x) - D^{\alpha}_{x}c_{j}(t,x)|^{2} \|u_{\varepsilon}(\tau)\|^{2}_{H^{m+1,s}} d\tau \\
	&+ C_{m,s} \int^{t}_{0} \max_{|\alpha| \leq m}\sup_{x\in\R^n}|D^{\alpha}_{x}c_{0,\varepsilon}(t,x)-D^{\alpha}_{x}c_0(t,x)|^{2} \|u_{\varepsilon}(\tau)\|^{2}_{H^{m,s}} d\tau.
\end{align*}
Now we observe the following:
\begin{itemize}
	\item[(i)] Since in this case we are regularising \textit{regular} functions, it is easy to conclude that the regularisations $c_{j,\varepsilon}$ satisfy uniform estimates with respect to $\varepsilon$. So, in this particular case, we obtain estimate \eqref{national_acrobat} uniformly with respect to $\varepsilon$. Hence, for any $m,s \in \N_0$, 
	$$
	\|u_{\varepsilon}(\tau)\|^{2}_{H^{m,s}} \leq C_{m,s}(f,g), \quad \forall \varepsilon \in (0,\varepsilon_0).
	$$
	
	
	\item[(ii)] We have the following convergence: for any $\beta \in \N^n_0$ and $j = 0, 1, \ldots, n$ it holds 
	$$
	\sup_{t \in [0,T],\, x \in \R^{n}} |\partial^{\beta}_{x}c_{j,\varepsilon}(t,x) - \partial^{\beta}_{x}c_{j}(t,x)| \to 0, \quad \text{as} \, \varepsilon \to 0.
	$$
	
	\item[(iii)] 
	The regularised data satisfy
		$$
		\|g-g_\varepsilon\|_{H^{m,s}} \to 0, \quad \sup_{t \in [0,T]} \|f(t)-f_{\varepsilon}(t)\|_{H^{m,s}} \to 0, \quad \text{when}\,\, \varepsilon \to 0,
		$$
		for all $m, s \in \N_0$.
\end{itemize}


We therefore conclude $u_{\varepsilon} \to u$ in $C([0,T];\mathscr{S}(\R^{n}))$. Summing up, we have proven the following result.

\begin{Prop}
	Assume that $c_j \in C([0,T]; \mathscr{S}(\R^{n}))$ for $j = 0, 1, \ldots, n$. Assume moreover $g \in \mathscr{S}(\R^n)$ and $f \in C([0,T];\mathscr{S}(\R^n))$. Then, 
	any $\mathscr{S}$-very weak solution converges to the unique classical solution in the space $C([0,T];\mathscr{S}(\R^{n}))$. In particular, in the classical case, the limit of very weak solutions always exists and does not depend on the regularisation. 
\end{Prop}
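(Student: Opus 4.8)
The plan is to exploit the energy estimate \eqref{national_acrobat} in the regime where the coefficients are already smooth — so that the constants appearing in it become independent of $\varepsilon$ — and then to compare the regularised net with the classical solution by applying this uniform energy estimate to their difference.

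First I would invoke the classical well-posedness result (Theorem \ref{Th_classical_Schwartz_result}) to fix the unique $u \in C([0,T];\mathscr{S}(\R^n))$ solving \eqref{CPintro} with the given Schwartz data and smooth coefficients. Then, given any $\mathscr{S}$-very weak solution $(u_\varepsilon)_\varepsilon$, associated with regularisations $c_{j,\varepsilon}, f_\varepsilon, g_\varepsilon$ of $c_j, f, g$, I would write down the Cauchy problem \eqref{reise_reise} satisfied by the difference $w_\varepsilon := u - u_\varepsilon$, whose source term is $(f-f_\varepsilon) + Q_\varepsilon u_\varepsilon$ with $Q_\varepsilon = \sum_{j=1}^n (c_{j,\varepsilon}-c_j)D_{x_j} + (c_{0,\varepsilon}-c_0)$ and whose initial datum is $g-g_\varepsilon$. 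The key point is that the operator acting on $w_\varepsilon$ is the \emph{unregularised} operator $S$, whose coefficients are genuinely smooth; hence Theorem \ref{apprn}, applied to $S$ itself, yields \eqref{national_acrobat} with constants that no longer contain the factor $e^{\varepsilon^{-J}}$, i.e. constants uniform in $\varepsilon$. Applying this uniform estimate to $w_\varepsilon$ gives, for every $m,s\in\N_0$, a bound of $\|w_\varepsilon(t)\|^2_{H^{m-c(s),s}}$ by a fixed constant times $\|g-g_\varepsilon\|^2_{H^{m,s}} + \int_0^t \|(f-f_\varepsilon)(\tau)\|^2_{H^{m,s}}\,d\tau + \int_0^t \big(\sum_j \max_{|\alpha|\le m}\sup_x|D^\alpha_x(c_{j,\varepsilon}-c_j)(\tau,x)|^2\big)\,\|u_\varepsilon(\tau)\|^2_{H^{m+1,s}}\,d\tau$.

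It then remains to send $\varepsilon\to 0^+$ on the right-hand side. The same argument just described, applied this time to $u_\varepsilon$ with the \emph{regularised} coefficients, shows that $\|u_\varepsilon(\tau)\|_{H^{m+1,s}}$ is bounded uniformly in $\varepsilon$ and $\tau$: this uses the elementary fact that mollifying a smooth rapidly decreasing function produces $\varepsilon$-uniform seminorm bounds on $c_{j,\varepsilon}$ (Proposition \ref{the_evil_that_men_do}(i)), so the exponential factor in \eqref{national_acrobat} is again replaced by a constant. Next, $\sup_{t,x}|\partial^\beta_x(c_{j,\varepsilon}-c_j)| \to 0$ for every $\beta$ (mollification of a Schwartz function converges to it with all derivatives, uniformly in $t$), and $\|g-g_\varepsilon\|_{H^{m,s}}\to 0$, $\sup_t\|f(t)-f_\varepsilon(t)\|_{H^{m,s}}\to 0$ for all $m,s$, using \eqref{children_of_the_sea} together with the description of $\mathscr{S}(\R^n)$ via the weighted Sobolev spaces $H^{m,M}(\R^n)$. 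Hence each term tends to $0$, so $\sup_{t\in[0,T]}\|w_\varepsilon(t)\|_{H^{m-c(s),s}}\to 0$ for every $m,s\in\N_0$; since the loss $c(s)=Cs$ depends on $s$ but not on $m$, letting $m$ be arbitrarily large recovers $\sup_{t}\|u(t)-u_\varepsilon(t)\|_{H^{k,\ell}}\to 0$ for every $k,\ell\in\N_0$, i.e. $u_\varepsilon\to u$ in $C([0,T];\mathscr{S}(\R^n))$. Since this computation never used the particular regularisation, the limit is independent of it, which also gives the last assertion.

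The argument is essentially bookkeeping on top of \eqref{national_acrobat}; the only genuinely new point to verify in this section — and the one I would be most careful about — is that regularising \emph{regular} (Schwartz) coefficients yields $\varepsilon$-uniform seminorm bounds as well as the stated uniform-in-$t$ convergence of $c_{j,\varepsilon}$, $f_\varepsilon$, $g_\varepsilon$ in all the relevant weighted Sobolev norms. Everything else then follows by inserting these facts into the already-established energy inequality.
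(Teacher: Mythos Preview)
Your proposal is correct and follows essentially the same route as the paper: you form the Cauchy problem \eqref{reise_reise} for $u-u_\varepsilon$ with the unregularised operator $S$, apply the energy estimate \eqref{national_acrobat} with $\varepsilon$-independent constants, and close the argument using exactly the three observations the paper records as (i)--(iii), namely the uniform bound on $\|u_\varepsilon\|_{H^{m+1,s}}$ coming from Proposition \ref{the_evil_that_men_do}(i), the uniform-in-$t$ convergence $\partial^\beta_x c_{j,\varepsilon}\to\partial^\beta_x c_j$, and the convergence of $g_\varepsilon,f_\varepsilon$ in the weighted Sobolev scale. The only cosmetic difference is that you invoke Theorem \ref{Th_classical_Schwartz_result} explicitly for the existence of the classical solution, whereas the paper simply asserts it.
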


\smallskip


\section{Classical Schwartz well-posedness result}\label{section_Schwartz_classical_result}
\

As we mentioned in the introduction, from the theory developed in this paper we obtain a result concerning the well-posedness in Schwartz spaces of the Cauchy problem \eqref{CPintro} which is new and interesting per se. In this section, we shall state it and explain how to prove it from the ideas used throughout the manuscript.   

Let $S$ be the Schr\"odinger type operator given in \eqref{city_of_tears} with the following hypotheses on the coefficients:
\begin{equation}\label{peace_of_mind}
	c_j \in C([0,T]; \mathcal{B}^{\infty}(\R^n)), \quad |\Im\, c_j(t,x)| \leq C \langle x \rangle^{-1}, \quad j = 0, 1, \ldots, n. 
\end{equation}
Then the following result holds.

\begin{Th}\label{Th_classical_Schwartz_result}
	Suppose $S$ with coefficients satisfying \eqref{peace_of_mind}. Let $f \in C([0,T]; \mathscr{S}(\R^n))$ and $g \in \mathscr{S}(\R^n)$ and consider the Cauchy problem \eqref{CPintro}. Then there exists a unique solution $u \in C([0,T]; \mathscr{S}(\R^n))$. Moreover, the solution $u$ satisfies the following energy inequality   
	\begin{equation*}
		\| u(t) \|^{2}_{H^{m-c(s), s}} \leq  C_{m,s,n,T} \left\{ \|g\|^{2}_{H^{m,s}} + \int_{0}^{t} \|f(\tau)\|^{2}_{H^{m,s}} d\tau \right\}, \quad s,m \in \R, \, t \in [0,T],
	\end{equation*}
	for some constant $c(s)$ depending only on $s$. In particular, the Cauchy problem \eqref{CPintro} is well-posed in $\mathscr{S}(\R^n)$.
\end{Th}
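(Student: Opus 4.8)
The plan is to reduce the theorem to the $H^\infty$-theory of Kajitani--Baba \cite{KB} via a conjugation by $\langle x\rangle^s$, exactly along the lines of the proof of Theorem \ref{apprn} (equivalently Proposition \ref{pompem}). The essential simplification is that here no regularisation parameter $\varepsilon$ is present and the coefficients are genuinely regular, so every constant produced will depend only on $m,s,n,T$ and on $S$, and the exponential factor $\exp\{C e^{\varepsilon^{-J}}\}$ of \eqref{aces_high} disappears.

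First I would fix $s\in\N_0$ and set $S_s:=\langle x\rangle^s S\langle x\rangle^{-s}$. As in \eqref{o_toco}, $S_s$ has principal part $D_t+\sum_j D_{x_j}^2$, first-order coefficients $c_j(t,x)+2is\langle x\rangle^{-1}\frac{x_j}{\langle x\rangle}$ and a zero-order term obtained from $c_0$ by adding bounded functions decaying at least like $\langle x\rangle^{-1}$, so the new zero-order coefficient is again in $\mathcal{B}^{\infty}(\R^n)$. The imaginary part of the new first-order coefficients is $\Im c_j(t,x)+2s\langle x\rangle^{-1}\frac{x_j}{\langle x\rangle}$, which by \eqref{peace_of_mind} still satisfies a bound $C_s\langle x\rangle^{-1}$ with $C_s$ linear in $s$, while the real part $\Re c_j$ stays in $\mathcal{B}^{\infty}(\R^n)$. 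Hence $S_s$ again falls into the class treated in \cite{KB} with decay exponent $\sigma=1$, so the Cauchy problem for $S_s$ is well-posed in $H^\infty(\R^n)$ with a finite loss of derivatives $c(s)$ depending only on $s$.

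For the precise estimate I would rerun the argument of Section \ref{Iron_maiden} for $S_s$, but now with a \emph{single} conjugation by $e^{\lambda_1}(x,D)$ (with $\ell=1$ and $M_1=M_1(s)$ large): the second conjugation by $e^{\lambda_2}$ in \eqref{auxiliary_important_CP} exists solely to keep the loss of regularity independent of $\varepsilon$ and is superfluous here. The ``bad'' first-order terms of $S_s$, namely $i\,\Im c_j(t,x)D_{x_j}$ and $2is\langle x\rangle^{-1}\frac{x_j}{\langle x\rangle}D_{x_j}$, have real symbols decaying exactly like $\langle x\rangle^{-1}$; so, following the computations leading to \eqref{the_clasnman_4}, on the support of $\chi(\langle x\rangle/|\xi|)(1-\chi)(|\xi|/h)$ the real part of the first-order symbol of $S_{s,\lambda_1}:=e^{\lambda_1}(x,D)\circ S_s\circ\{e^{\lambda_1}(x,D)\}^{-1}$ is bounded below by $(M_1-C_s)|\xi|\langle x\rangle^{-1}\chi(\langle x\rangle/|\xi|)(1-\chi)(|\xi|/h)$, hence nonnegative once $M_1\geq C_s$, up to a fixed zero-order remainder (absorbing the $p_{\log}$-type terms of \eqref{the_clasnman_4}, which are bounded once $M_1$ and $h$ are fixed), while off that support $S_{s,\lambda_1}$ reduces to $\partial_t+i\sum_j D_{x_j}^2$ plus a fixed zero-order term. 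Sharp G{\aa}rding, a further conjugation by $\langle D\rangle^m$, and Gronwall's inequality then give
\begin{equation*}
\|u(t)\|^2_{H^m}\leq C_{m,s,n,T}\Bigl(\|u(0)\|^2_{H^m}+\int_0^t\|S_{s,\lambda_1}(\tau)u(\tau)\|^2_{H^m}\,d\tau\Bigr),\qquad m\in\N_0,
\end{equation*}
with constants free of $\varepsilon$. Since $e^{\lambda_1}(x,D)\colon H^m\to H^{m-c(s)/2}$ is invertible for $h$ large (cf. \eqref{alessia}), transforming back exactly as in the proof of Proposition \ref{pompem} produces a unique $v\in C([0,T];H^{m-c(s)}(\R^n))$ solving $S_s v=\langle x\rangle^s f$, $v(0)=\langle x\rangle^s g$; putting $u:=\langle x\rangle^{-s}v$ then gives $Su=f$, $u(0)=g$ together with precisely the energy inequality in the statement. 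As $f\in C([0,T];\mathscr{S}(\R^n))$ and $g\in\mathscr{S}(\R^n)$, its right-hand side is finite for all $m,s$, so $\mathscr{S}(\R^n)=\bigcap_{m,M}H^{m,M}(\R^n)$ yields $u\in C([0,T];\mathscr{S}(\R^n))$ (the same argument with $m,s\in\R$ gives the inequality for all real indices); uniqueness follows by applying the estimate to the difference of two solutions, which solves the homogeneous problem.

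The one point deserving care — and the reason \eqref{peace_of_mind} only constrains $\Im c_j$ — is to keep the non-decaying real parts $\Re c_j$ out of the positivity/sharp-G{\aa}rding step: since $(\Re c_j\,\partial_{x_j})+(\Re c_j\,\partial_{x_j})^{\ast}=-\partial_{x_j}(\Re c_j)\in\mathcal{B}^{\infty}(\R^n)$, the term $i\,\Re c_j(t,x)D_{x_j}=\Re c_j(t,x)\,\partial_{x_j}$ contributes only an $O(\|u\|^2_{L^2})$ quantity to $2\Re\langle\partial_t u,u\rangle_{L^2}$ and requires no compensation, so $e^{\lambda_1}(x,D)$ is asked to control only the $O(\langle x\rangle^{-1})$ imaginary parts, exactly as in the Kajitani--Baba mechanism. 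With all constants independent of $\varepsilon$, the estimate of the statement follows, and this is the whole content of the proof; I do not expect a genuine obstacle beyond carefully transcribing Section \ref{Iron_maiden} in this simplified, $\varepsilon$-free setting.
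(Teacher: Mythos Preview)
Your proposal is correct and follows essentially the same approach as the paper: the paper's proof is a brief sketch that likewise conjugates by $\langle x\rangle^s$ and then reruns Section \ref{Iron_maiden} with only the single conjugation by $e^{\lambda_1}(x,D)$, observing that the second transformation $e^{\lambda_2}$ becomes superfluous once the parameter $\varepsilon$ is absent. Your closing remark---that $\Re c_j$ needs no decay because $\Re c_j\,\partial_{x_j}$ is skew-adjoint modulo a zero-order term, so $e^{\lambda_1}$ is only asked to compensate the $\langle x\rangle^{-1}$-decaying imaginary parts---is an accurate clarification that the paper's sketch does not make explicit.
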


To prove the above theorem, the idea is to consider the operator 
$$
S_s = \langle x \rangle^{s} \circ S \circ \langle x \rangle^{-s}, 
$$
and to repeat the steps of Section \ref{Iron_maiden}. We just remark that since we do not have to worry with a parameter $\varepsilon$ and the coefficients have rapid decay, the sole conjugation by $e^{\lambda_1}(x,D)$ is enough to prove the above result via an auxiliary Cauchy problem (cf. again Section \ref{Iron_maiden}). 

\smallskip


\section*{Appendix}
\addcontentsline{toc}{section}{Appendix}
\renewcommand{\thesection}{A}

	\section{Regularisation of tempered distributions by Schwartz functions}\label{Regularisation of tempered distributions by Schwartz functions}

\noindent

This appendix is devoted to the study of a special kind of regularisations of tempered distributions. Namely, we want to regularise distributions in $\mathscr{S}'(\R^n)$ by functions in $\mathscr{S}(\R^n)$. Despite the results reported here are quite natural and straightforward, we decided to present them in detail for sake of completeness.

We recall that if $u \in \mathscr{S}'(\R^n)$ then there exist $C > 0$ and $N \in \N_0$ such that 
$$
|\langle u , h \rangle| \leq C \sum_{|\alpha|, |\beta| \leq N} \sup_{x\in\R^{n}}\langle x \rangle^{|\beta|}|\partial^{\alpha}_{x} h(x)|, \quad \forall\, h \in \mathscr{S}(\R^{n}).
$$
Let $\phi, \psi \in \mathscr{S}(\R^{n})$ such that $\phi(0) = 1$ and $\widehat{\psi}(0) = 1$. We shall use the following notations
$$
\phi^{\varepsilon}(x) := \phi(\varepsilon x), \quad \psi_{\varepsilon}(\xi) := \frac{1}{\varepsilon^{n}} \psi\left( \frac{\xi}{\varepsilon} \right), \quad \varepsilon \in (0,1],\, x, \xi \in \R^{n}.
$$
For any given $u \in \mathscr{S}'(\R^n)$ we consider the following type of regularisation 
\begin{equation}\label{regularisation}
	u_{\varepsilon}(x) = \phi^{\varepsilon}(x) \{\psi_{\varepsilon}\ast u\}(x).
\end{equation}

\begin{Lemma}
	Let $u \in \mathscr{S}'(\R^{n})$. Then 
	$$
	\psi_{\varepsilon} \ast u \in \mathcal{O}_{M}(\R^{n}),
	$$ where
$$	\mathcal{O}_{M} = \left\{ f \in C^{\infty}(\R^{n}) : \text{for any}\,\, \alpha \in \N_0^n \,\, \text{there is}\, p \geq 0\,\, \text{such that} \,\, \sup_{x} \{\langle x \rangle^{-p} |\partial^{\alpha}_{x}f(x)|\} < \infty \right\}.
	$$
	
	More precisely, the following estimate holds:
	$$
	|\partial^{\alpha}_{x} (\psi_{\varepsilon} \ast u)(x)| \leq  C_{N,\alpha} \varepsilon^{-N-|\alpha|-n} \langle x \rangle^{N},
	$$
	where $N$ depends on $u$ and $C_{N,\alpha}$ depends on $u$, $\alpha$ and on seminorms of $\psi$ with order up to $N+|\alpha|$. 
\\
\end{Lemma}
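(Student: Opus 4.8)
\emph{Plan of proof.} The idea is to write the convolution as the action of $u$ on a translated, rescaled copy of $\psi$ and to feed the resulting test function into the continuity estimate for tempered distributions recalled above. Since $\psi_{\varepsilon}\in\mathscr{S}(\R^{n})$ for every $\varepsilon\in(0,1]$, the function $y\mapsto\psi_{\varepsilon}(x-y)$ lies in $\mathscr{S}(\R^{n})$ for each fixed $x$, and we define
\[
(\psi_{\varepsilon}\ast u)(x)=\langle u,\psi_{\varepsilon}(x-\cdot)\rangle .
\]
First I would verify that this is a $C^{\infty}$ function of $x$ and that derivatives pass under the bracket, i.e. $\partial^{\alpha}_{x}(\psi_{\varepsilon}\ast u)(x)=\langle u,(\partial^{\alpha}\psi_{\varepsilon})(x-\cdot)\rangle$. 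This is the standard argument: $x\mapsto\psi_{\varepsilon}(x-\cdot)$ is a smooth map into $\mathscr{S}(\R^{n})$ whose difference quotients converge in $\mathscr{S}(\R^{n})$, and $u$ is sequentially continuous on $\mathscr{S}(\R^{n})$.

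Next I would apply the continuity estimate: there are $C>0$ and $N\in\N_{0}$, depending only on $u$, such that $|\langle u,h\rangle|\le C\sum_{|\gamma|,|\beta|\le N}\sup_{y}\langle y\rangle^{|\beta|}|\partial^{\gamma}_{y}h(y)|$ for all $h\in\mathscr{S}(\R^{n})$. Taking $h(y)=(\partial^{\alpha}\psi_{\varepsilon})(x-y)$, so that $\partial^{\gamma}_{y}h(y)=(-1)^{|\gamma|}(\partial^{\alpha+\gamma}\psi_{\varepsilon})(x-y)$, one is reduced to bounding $\sup_{y}\langle y\rangle^{|\beta|}|(\partial^{\alpha+\gamma}\psi_{\varepsilon})(x-y)|$ for $|\gamma|,|\beta|\le N$. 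The two ingredients are the Peetre-type inequality $\langle y\rangle\lesssim\langle x\rangle\langle x-y\rangle$, which yields $\langle y\rangle^{|\beta|}\le C_{N}\langle x\rangle^{N}\langle x-y\rangle^{N}$, and the scaling identity $(\partial^{\alpha+\gamma}\psi_{\varepsilon})(z)=\varepsilon^{-n-|\alpha+\gamma|}(\partial^{\alpha+\gamma}\psi)(z/\varepsilon)$ combined with $\langle z\rangle\le\langle z/\varepsilon\rangle$ for $\varepsilon\le1$; together these give
\[
\sup_{z}\langle z\rangle^{N}|(\partial^{\alpha+\gamma}\psi_{\varepsilon})(z)|\le\varepsilon^{-n-|\alpha+\gamma|}\sup_{w}\langle w\rangle^{N}|(\partial^{\alpha+\gamma}\psi)(w)| ,
\]
the last supremum being a seminorm of $\psi$ of order at most $N+|\alpha|$.

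Collecting these estimates and using $|\gamma|\le N$ to absorb $\varepsilon^{-|\gamma|}$ into $\varepsilon^{-N}$ gives $|\partial^{\alpha}_{x}(\psi_{\varepsilon}\ast u)(x)|\le C_{N,\alpha}\varepsilon^{-N-|\alpha|-n}\langle x\rangle^{N}$, with $C_{N,\alpha}$ depending only on $u$, on $\alpha$ and on finitely many seminorms of $\psi$ of order up to $N+|\alpha|$; in particular $\psi_{\varepsilon}\ast u\in\mathcal{O}_{M}(\R^{n})$, with $p=N$ uniformly in $\alpha$. There is no real obstacle here: the only points requiring a bit of care are the rigorous justification of differentiation under the distributional bracket and the bookkeeping of the powers of $\varepsilon$, namely checking that the decay trade-off $\langle z\rangle\le\langle z/\varepsilon\rangle$ is free for $\varepsilon\le1$, so that the only negative power of $\varepsilon$ produced is the $\varepsilon^{-n-|\alpha+\gamma|}$ coming from the normalisation of $\psi_{\varepsilon}$.
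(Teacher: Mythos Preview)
Your proposal is correct and follows essentially the same route as the paper's proof: both express $\partial^{\alpha}_{x}(\psi_{\varepsilon}\ast u)(x)$ as the action of $u$ on the rescaled and translated Schwartz function, apply the tempered-distribution continuity estimate, use Peetre's inequality $\langle y\rangle^{|\beta|}\le 2^{|\beta|}\langle x\rangle^{|\beta|}\langle x-y\rangle^{|\beta|}$, and then exploit $\langle \varepsilon z\rangle\le\langle z\rangle$ together with the scaling of $\psi_{\varepsilon}$ to track the powers of $\varepsilon$. The only cosmetic difference is that the paper pulls out the factor $\varepsilon^{-n-|\alpha|}$ at the very first step, whereas you keep it inside $\psi_{\varepsilon}$ and extract it later; the bookkeeping and the final constant $C_{N,\alpha}\,\varepsilon^{-N-|\alpha|-n}\langle x\rangle^{N}$ are identical.
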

\begin{proof}
	The proof follows from the following estimate
	\begin{align*}
		|\partial^{\alpha}_{x} (\psi_{\varepsilon} \ast u)(x)| &=  \frac{1}{\varepsilon^{|\alpha|+n}} \left|\left\langle u,  (\partial^{\alpha}\psi)\left(\frac{x-\cdot}{\varepsilon}\right) \right\rangle \right| \\
		&\leq \frac{C}{\varepsilon^{|\alpha|+n}} \sum_{|\gamma|, |\beta| \leq N} \left\| \langle y \rangle^{|\beta|} \partial^{\gamma}_{y} (\partial^{\alpha}\psi)\left(\frac{x-y}{\varepsilon}\right)  \right\|_{L^{\infty}(\R^{n}_{y})} \\
		&\leq \frac{C}{\varepsilon^{|\alpha|+n}} \sum_{|\gamma|, |\beta| \leq N} \left\| \varepsilon^{-|\gamma|} 2^{|\beta|} \langle x \rangle^{|\beta|} \langle x-y\rangle^{|\beta|} (\partial^{\alpha+\gamma}\psi)\left(\frac{x-y}{\varepsilon}\right)  \right\|_{L^{\infty}(\R^{n}_{y})} \\
		&\leq C_{N,\alpha} \varepsilon^{-|\alpha|-N-n} \langle x \rangle^{N}.
	\end{align*}
\end{proof}

From Leibniz formula and the above lemma we easily conclude that $u_{\varepsilon}$ given in \eqref{regularisation} belongs to $\mathscr{S}(\R^{n})$ for all $\varepsilon \in (0,1)$ and the following estimate holds:
\begin{equation}\label{estimate_that_we_need}
	|\langle x \rangle^{M} \partial^{\alpha}_{x} u_{\varepsilon}(x)| \leq C_{N,M,\alpha} \varepsilon^{-n-|\alpha|-N-M}.
\end{equation}

Now we want to check that the regularisation $u_\varepsilon$ converges to $u$ in $\mathscr{S}'(\R^{n})$. This is done in Theorem \ref{drunken_lullabies} here below; to prove it we shall need the following lemmas. 

\begin{Lemma}
	Let $\{u_j\}_{j \in \N_0} \subset \mathscr{S}'(\R^{n})$, $u \in \mathscr{S}'(\R^{n})$, $\{h_j\}_{j \in \N_0} \subset \mathscr{S}(\R^{n})$ and $h \in \mathscr{S}(\R^{n})$. If 
	$$
	\lim_{j \to \infty} u_j = u \,\, \text{in}\,\, \mathscr{S}'(\R^{n}) \quad \text{and} \quad \lim_{j\to\infty}h_j = h \,\, \text{in}\,\, \mathscr{S}(\R^{n}),
	$$ 
	then $u_j(h_j) \to u(h)$ in $\C$.
\end{Lemma}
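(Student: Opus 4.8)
The plan is to use the standard splitting
$$
u_j(h_j) - u(h) = u_j(h_j - h) + (u_j - u)(h),
$$
and to treat the two summands separately. The second summand $(u_j - u)(h)$ tends to $0$ as $j \to \infty$ directly from the hypothesis $u_j \to u$ in $\mathscr{S}'(\R^n)$: convergence in $\mathscr{S}'(\R^n)$ means exactly that $\langle u_j, \varphi \rangle \to \langle u, \varphi \rangle$ for each fixed $\varphi \in \mathscr{S}(\R^n)$, and here we test against the fixed function $h$.

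For the first summand the key point is to obtain a continuity estimate for the $u_j$ that is \emph{uniform} in $j$. Since $\mathscr{S}(\R^n)$ is a Fr\'echet space, each $u_j$ is a continuous linear functional on it, and for every $\varphi \in \mathscr{S}(\R^n)$ the numerical sequence $(u_j(\varphi))_j$ is convergent, hence bounded, the Banach--Steinhaus theorem (uniform boundedness principle) applies and yields equicontinuity of the family $\{u_j\}_j$. Concretely, there exist $C > 0$ and $N \in \N_0$, both independent of $j$, such that
$$
|u_j(\varphi)| \leq C \sum_{|\alpha|, |\beta| \leq N} \sup_{x \in \R^n} \langle x \rangle^{|\beta|} |\partial^{\alpha}_{x} \varphi(x)|, \qquad \forall\, \varphi \in \mathscr{S}(\R^n),\ \forall\, j .
$$

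With this uniform bound at hand, the first summand is estimated by
$$
|u_j(h_j - h)| \leq C \sum_{|\alpha|, |\beta| \leq N} \sup_{x \in \R^n} \langle x \rangle^{|\beta|} |\partial^{\alpha}_{x}(h_j - h)(x)| ,
$$
and the right-hand side tends to $0$ as $j \to \infty$, since $h_j \to h$ in $\mathscr{S}(\R^n)$ means precisely that every Schwartz seminorm of $h_j - h$ vanishes in the limit. Adding the two contributions gives $u_j(h_j) \to u(h)$ in $\C$.

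The only step that is not completely elementary is the passage from the pointwise (weak-$\ast$) convergence of the $u_j$ to their equicontinuity, which rests on the completeness of $\mathscr{S}(\R^n)$ via Banach--Steinhaus; I would spell that out carefully, while the rest is a routine $\varepsilon$-splitting argument.
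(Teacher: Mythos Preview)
Your proof is correct and follows essentially the same route as the paper: the same splitting $u_j(h_j)-u(h)=u_j(h_j-h)+(u_j-u)(h)$, the same appeal to Banach--Steinhaus (pointwise boundedness $\Rightarrow$ equicontinuity on the Fr\'echet space $\mathscr{S}(\R^n)$) to control the first summand uniformly in $j$, and the same direct use of weak-$\ast$ convergence for the second. Your version is in fact slightly more explicit, writing the equicontinuity as a uniform seminorm bound rather than in metric form.
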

\begin{proof}
	Since $u_j \to u$ in $\mathscr{S}'(\R^{n})$ we have that the family $\{u_j\}$ is pointwise bounded. Therefore, Banach-Steinhaus theorem gives that the sequence $\{u_j\}$ is equicontinuous. In this way, for any $\delta_1 > 0$ there exists $\delta_2 > 0$ such that 
	$$
	d(h, w) \leq \delta_1,\,\, h,w \in \mathscr{S}(\R^{n}) \implies |u_j(h-w)| \leq \delta_2, \quad \forall\, j \in \N_0,  
	$$
	where $d$ denotes the usual distance in $\mathscr{S}(\R^{n})$. In particular, we conclude that
	$$
	\lim_{j \to \infty} u_j(h_j-h) = 0.
	$$
	To finish the proof it suffices to observe that
	$$
	u_j(h_j) - u(h) = u_j(h_j - h) + u_j(h) - u(h) \to_{j\to\infty} 0.
	$$
\end{proof}

\begin{Lemma}
	For any $h \in \mathscr{S}(\R^{n})$ we have
	$$
	\psi_{\varepsilon} \ast h \to h \,\, \text{in}\; \mathscr{S}(\R^{n}).
	$$
\end{Lemma}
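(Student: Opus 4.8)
The plan is to use the seminorm description of convergence in $\mathscr{S}(\R^n)$ together with a first order Taylor expansion of $h$. Recall that $\psi_\varepsilon \ast h \to h$ in $\mathscr{S}(\R^n)$ means precisely that for every $M \in \N_0$ and every $\beta \in \N_0^n$ one has $\sup_{x \in \R^n} \langle x \rangle^M |\partial_x^\beta (\psi_\varepsilon \ast h - h)(x)| \to 0$ as $\varepsilon \to 0^+$. Since $\partial_x^\beta(\psi_\varepsilon \ast h) = \psi_\varepsilon \ast (\partial_x^\beta h)$ and $\partial_x^\beta h \in \mathscr{S}(\R^n)$, it is enough to treat the case $\beta = 0$, replacing $h$ by $\partial_x^\beta h$ if necessary. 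Moreover, the hypothesis $\widehat{\psi}(0) = 1$ is exactly $\int_{\R^n} \psi(z)\,dz = 1$, so after the change of variables $y = \varepsilon z$ one may write
\[
(\psi_\varepsilon \ast h)(x) - h(x) = \int_{\R^n} \psi(z)\,[h(x - \varepsilon z) - h(x)]\,dz .
\]

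Next I would Taylor expand to first order: for all $x, z \in \R^n$ and all $\varepsilon \in (0,1]$,
\[
h(x - \varepsilon z) - h(x) = -\varepsilon \sum_{j=1}^n z_j \int_0^1 (\partial_{x_j} h)(x - \theta \varepsilon z)\,d\theta .
\]
Multiplying by $\langle x \rangle^M$ and using Peetre's inequality in the form $\langle x \rangle^M \le 2^{|M|/2} \langle x - \theta \varepsilon z \rangle^M \langle \theta \varepsilon z \rangle^{|M|} \le 2^{|M|/2} \langle x - \theta \varepsilon z \rangle^M \langle z \rangle^{|M|}$, valid because $0 \le \theta \varepsilon \le 1$, one obtains
\[
\langle x \rangle^M \,|(\psi_\varepsilon \ast h)(x) - h(x)| \le 2^{|M|/2}\,\varepsilon \sum_{j=1}^n \Big( \sup_{y \in \R^n} \langle y \rangle^M |(\partial_{x_j} h)(y)| \Big) \int_{\R^n} |\psi(z)|\,|z|\,\langle z \rangle^{|M|}\,dz .
\]
Since $h \in \mathscr{S}(\R^n)$ the suprema on the right are finite, and since $\psi \in \mathscr{S}(\R^n)$ the integral is finite; hence the left-hand side is bounded by $C_M \varepsilon$ uniformly in $x$, and the claim follows by letting $\varepsilon \to 0^+$ and recalling the reduction to $\beta = 0$.

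Everything here is elementary; the only point requiring a little care is shifting the polynomial weight $\langle x \rangle^M$ onto the translated argument $x - \theta \varepsilon z$, which is exactly what Peetre's inequality does, at the harmless cost of the extra factor $\langle z \rangle^{|M|}$ absorbed by the rapid decay of $\psi$. An equivalent route, which I would mention as an alternative, is to pass to the Fourier side: the statement becomes $\widehat{\psi}(\varepsilon \xi)\,\widehat{h}(\xi) \to \widehat{h}(\xi)$ in $\mathscr{S}(\R^n)$, which follows from $\widehat{\psi}(0) = 1$, the boundedness of all derivatives of $\widehat{\psi} \in \mathscr{S}(\R^n)$, and a splitting of $\xi$-space into a large ball, where $\widehat{\psi}(\varepsilon\xi) - 1 = O(\varepsilon)$ by the mean value theorem, and its complement, where $\widehat{h}$ together with its derivatives and any polynomial weight are already uniformly small.
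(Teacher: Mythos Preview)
Your proof is correct and shares the same skeleton as the paper's: reduce to $\beta=0$ via $\partial^\beta(\psi_\varepsilon\ast h)=\psi_\varepsilon\ast\partial^\beta h$, Taylor expand $h(x-y)-h(x)$ to first order, and shift the weight $\langle x\rangle^M$ onto the translated argument with a Peetre-type inequality. The execution differs in one useful way: you perform the change of variables $y=\varepsilon z$ at the outset, which makes the factor $\varepsilon$ explicit and yields the quantitative bound $\sup_x\langle x\rangle^M|(\psi_\varepsilon\ast h-h)(x)|\le C_M\,\varepsilon$ directly. The paper instead keeps the variable $y$ and splits the integral into $|y|\le\delta$ and $|y|\ge\delta$, handling the near part by smallness of $|y|$ and the far part by the tail decay $\int_{|z|\ge\delta/\varepsilon}|\psi(z)|\langle z\rangle^M dz\to 0$; this gives the convergence but not the rate. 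Your route is shorter and gives more, at no extra cost; the Fourier-side alternative you mention is also valid.
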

\begin{proof}
	Since $\partial^{\alpha}_{x} \{ \psi_{\varepsilon} \ast h \} = \psi_{\varepsilon} \ast \{\partial^{\alpha}_{x} h\}$ we only need to prove that 
	$$
	\sup_{x \in \R^{n}} \langle x \rangle^{M} |\psi_{\varepsilon} \ast h (x) - h(x)| \to 0, \,\, \text{as} \, \varepsilon \to 0. 
	$$
	We have 
	\begin{align*}
		\psi_{\varepsilon} \ast h (x) - h(x) = \int \psi_{\varepsilon}(y)\{ h(x-y) - h(x)\} dx = \int \psi_{\varepsilon}(y)\, y \cdot \int_{0}^{1} (\nabla h)(x-\theta y) d\theta \, dy.
	\end{align*}
	
	Let $\delta > 0$. When $|y| \leq \delta$ we have 
	\begin{eqnarray*}
		\langle x \rangle^{M} \left| \int_{|y| \leq \delta} \psi_{\varepsilon}(y)\, y \right.\hskip-0.4cm&\cdot&\hskip-0.4cm\left. \int_{0}^{1} (\nabla h)(x-\theta y) d\theta \, dy \right| \\
&\leq & \int_{|y| \leq \delta} \psi_{\varepsilon}(y) |y| \int_{0}^{1} 2^{M} |(\nabla h)(x-\theta y)| \langle x - \theta y \rangle^{M} \langle \theta y \rangle^{M} d\theta \, dy  \\
		&\leq& \delta 4^{M} \sup_{x \in \R^n} |(\nabla h)(x)| \langle x \rangle^{M}.
	\end{eqnarray*}
	Since 
	\begin{eqnarray*}
	\langle x \rangle^{M} |h(x-y) - h(x)| &\leq& 2^{M} \langle y \rangle^{M} \langle x - y \rangle^{M} |h(x-y)| + \langle x \rangle^{M} |h(x)| 
\\
&\leq& 2^{M+1} \langle y \rangle^{M} \sup_{x \in \R^{n}} \langle x \rangle^{M} |h(x)|,
	\end{eqnarray*}
	if $|y| \geq \delta$ we get
	\begin{align*}
		\langle x \rangle^{M} \left| \int_{|y| \geq \delta} \psi_{\varepsilon}(y)\{ h(x-y) - h(x)\} dx \right| &\leq 2^{M+1} \sup_{x \in \R^{n}} \langle x \rangle^{M} |h(x)| \int_{|y| \geq \delta}  \psi_{\varepsilon}(y) \langle y \rangle^{M} dy \\
		&\leq  2^{M+1} \sup_{x \in \R^{n}} \langle x \rangle^{M} |h(x)| \underbrace{\int_{|y| \geq \frac{\delta}{\varepsilon}}  \psi(y) \langle y \rangle^{M} dy}_{\to 0 \,\, \text{as} \, \varepsilon \to 0}.
	\end{align*}
	
	From the above inequalities we can find $\varepsilon_0 > 0$ such that 
	$$
	\varepsilon \leq \varepsilon_0 \implies \sup_{x \in \R^{n}} \langle x \rangle^{M} |\psi_{\varepsilon} \ast h (x) - h(x)|  \leq C\delta.
	$$
\end{proof}

\begin{Rem}
	From the previous lemma we conclude that for any $u \in \mathscr{S}'(\R^{n})$ 
	$$
	\psi_{\varepsilon} \ast u \to u \,\, \text{in} \,\, \mathscr{S}'(\R^{n}).
	$$ 
	Indeed, it suffices to observe
	$$
	\langle \psi \ast u, h \rangle = \langle u, \tilde{\psi} \ast h \rangle, \quad h \in \mathscr{S}(\R^{n}),
	$$
	where $\tilde{\psi}(\xi) = \psi(-\xi)$, for any $\psi \in \mathscr{S}(\R^n)$ and $u \in \mathscr{S}'(\R^n)$.
\end{Rem}

\begin{Lemma}
	For any $h \in \mathscr{S}(\R^{n})$ we have
	$$
	\phi^\varepsilon h \to h \,\, \text{in}\, \mathscr{S}(\R^{n}).
	$$
\end{Lemma}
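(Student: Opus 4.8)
The plan is to verify the convergence directly from the definition of the topology of $\mathscr{S}(\R^{n})$, i.e.\ to show that for every $M \in \N_0$ and every $\alpha \in \N_0^n$,
$$
\sup_{x \in \R^{n}} \langle x \rangle^{M} \left| \partial^{\alpha}_{x} \{ \phi(\varepsilon x) h(x) - h(x) \} \right| \to 0, \qquad \varepsilon \to 0^{+}.
$$
First I would expand by the Leibniz rule, $\partial^{\alpha}_{x}\{\phi(\varepsilon x) h(x)\} = \sum_{\beta \leq \alpha} \binom{\alpha}{\beta} \varepsilon^{|\beta|} (\partial^{\beta}\phi)(\varepsilon x)\, \partial^{\alpha-\beta}_{x} h(x)$, and isolate the term $\beta = 0$, using $\phi(0) = 1$, to get
$$
\partial^{\alpha}_{x}\{\phi(\varepsilon x) h(x) - h(x)\} = (\phi(\varepsilon x) - 1)\, \partial^{\alpha}_{x} h(x) + \sum_{0 \neq \beta \leq \alpha} \binom{\alpha}{\beta} \varepsilon^{|\beta|} (\partial^{\beta}\phi)(\varepsilon x)\, \partial^{\alpha-\beta}_{x} h(x).
$$

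The terms in the sum with $\beta \neq 0$ are immediately negligible: since $\phi \in \mathscr{S}(\R^{n})$ each $\partial^{\beta}\phi$ is bounded, and since $h \in \mathscr{S}(\R^{n})$ each $\langle x \rangle^{M} |\partial^{\alpha-\beta}_{x} h(x)|$ is bounded, so $\langle x \rangle^{M}$ times such a term is $\leq C_{\alpha,\beta,M}\, \varepsilon^{|\beta|} \leq C \varepsilon \to 0$. For the main term $(\phi(\varepsilon x) - 1)\, \partial^{\alpha}_{x} h(x)$ I would use a tail/bulk splitting: given $\delta > 0$, choose $R > 0$ so large that $\sup_{|x| \geq R} \langle x \rangle^{M} |\partial^{\alpha}_{x} h(x)| \leq \delta$; on $\{|x| \geq R\}$ estimate $|\phi(\varepsilon x) - 1| \leq \|\phi\|_{L^{\infty}} + 1$, giving a contribution $\leq (\|\phi\|_{L^{\infty}}+1)\delta$ uniformly in $\varepsilon$. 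On $\{|x| \leq R\}$ use $|\phi(\varepsilon x) - 1| \leq \sup_{|y| \leq \varepsilon R} |\phi(y) - \phi(0)|$, which tends to $0$ as $\varepsilon \to 0$ by continuity of $\phi$ at the origin, while $\langle x \rangle^{M}|\partial^{\alpha}_{x} h(x)| \leq C$ for all $x$; so this contribution is $\leq C \sup_{|y| \leq \varepsilon R} |\phi(y)-\phi(0)| \to 0$. Fixing $R$ first and then taking $\varepsilon$ small makes the whole expression $\leq (\|\phi\|_{L^{\infty}}+2)\delta$ for small $\varepsilon$; since $\delta$ is arbitrary, this proves the claim.

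There is no real obstacle here; the only point requiring care is that $\phi(\varepsilon x) \to 1$ is \emph{not} uniform on $\R^{n}$, so the polynomial weight $\langle x \rangle^{M}$ must be absorbed into the rapid decay of $h$ and its derivatives (via the choice of $R$) before letting $\varepsilon \to 0$ on the remaining bounded region.
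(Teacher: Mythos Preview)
Your proof is correct and follows essentially the same approach as the paper's own argument: Leibniz expansion, immediate $O(\varepsilon)$ control of the terms with $\beta\neq 0$, and a tail/bulk splitting for the main term $(\phi(\varepsilon x)-1)\partial^{\alpha}_{x}h(x)$. The only cosmetic difference is that the paper factors the main term as $\langle x\rangle^{-1}\{\phi(\varepsilon x)-1\}\langle x\rangle^{M+1}\partial^{\alpha}_{x}h(x)$ and uses smallness of $\langle x\rangle^{-1}$ on the tail, whereas you use directly that $\langle x\rangle^{M}|\partial^{\alpha}_{x}h(x)|$ is small at infinity; both exploit the same rapid decay of $h$.
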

\begin{proof}
	We have 
	\begin{align*}
		\langle x \rangle^{M}\partial^{\alpha}_{x}\{(\phi^\varepsilon(x)-1)h(x)\} &= \langle x \rangle^{M}\{\phi(\varepsilon x) - 1\}\partial^{\alpha}_{x} h(x) \\ 
		&+ 
		\underbrace{\varepsilon \langle x \rangle^{M} \sum_{\overset{\alpha_1 + \alpha_2 = \alpha}{\alpha_1 \geq 1}} \frac{\alpha!}{\alpha_1!\alpha_2!} \varepsilon^{|\alpha_1|-1} (\partial^{\alpha_1}\phi)(\varepsilon x) \partial^{\alpha_2}_{x}h(x)}_{\text{converges to}\, 0\, \text{uniformly in}\, x }.
	\end{align*}
	So we only need to prove that $ \langle x \rangle^{M}\{\phi(\varepsilon x) - 1\}\partial^{\alpha}_{x} h(x)$ converges to zero uniformly in $x$. We first write 
	\begin{align*}
		\langle x \rangle^{M}\{\phi(\varepsilon x) - 1\}\partial^{\alpha}_{x} h(x) = \langle x \rangle^{-1} \{\phi(\varepsilon x) - 1\} \langle x \rangle^{M+1} \partial^{\alpha}_{x}h(x).
	\end{align*}
	Let then $\delta > 0$. There is $R_{\delta} > 0$ such that 
	\begin{align*}
		|x| \geq R_\delta &\implies \langle x \rangle^{-1} \leq \delta \left\{ 2 \|\phi\|_{L^{\infty}} \|\langle x \rangle^{M+1} |\partial^{\alpha}_{x}h(x)| \|_{L^{\infty}} \right\}^{-1}  \\
		&\implies  \langle x \rangle^{-1} \{\phi(\varepsilon x) - 1\} \langle x \rangle^{M+1} |\partial^{\alpha}_{x}h(x)| \leq \delta.
	\end{align*}
	On the other hand, since $\phi_{\varepsilon}(x)$ converges uniformly to $1$ on compact sets, we may find $\varepsilon_0 > 0$ such that 
	\begin{align*}
		|x| \leq R_{\delta}, \,\, \varepsilon \leq \varepsilon_0 \implies  |\phi(\varepsilon x) - 1|\langle x \rangle^{M} |\partial^{\alpha}_{x} h(x)| \leq \delta.
	\end{align*}
	Hence, if $\varepsilon \leq \varepsilon_0$ we have
	$$
	\sup_{x \in \R^n} |\phi(\varepsilon x) - 1| \langle x \rangle^{M} |\partial^{\alpha}_{x} h(x)| \leq \delta.
	$$
\end{proof}

\begin{Rem}
	From the previous two lemmas we conclude $u_\varepsilon \to u$ in $\mathscr{S}(\R^{n})$ provided that $u \in \mathscr{S}(\R^{n})$.
\end{Rem}

We are finally ready to prove that the regularisation $u_{\varepsilon}$ defined by \eqref{regularisation} converges to $u$ in $\mathscr{S}'(\R^{n})$. Indeed, this follows immediately from the previous three lemmas. As a matter of fact
\begin{align*}
	\langle u_{\varepsilon}, h \rangle = \langle \underbrace{\psi_{\varepsilon} u}_{\to u \,\, in \, \mathscr{S}'} , \underbrace{\phi_{\varepsilon} h}_{\to h\, in \, \mathscr{S}(\R^{n})} \rangle \to \langle u , h \rangle, \quad h \in \mathscr{S}(\R^{n}).	
\end{align*}

We summarise what we have proven so far in this Appendix in the following result.

\begin{Th}\label{drunken_lullabies}
	Let $\phi, \psi \in \mathscr{S}(\R^{n})$ such that $\phi(0) = 1$ and $\widehat{\psi}(0) = 1$ and define
	$$
	\phi^{\varepsilon}(x) := \phi(\varepsilon x), \quad \psi_{\varepsilon}(\xi) := \frac{1}{\varepsilon^{n}} \psi\left( \frac{\xi}{\varepsilon} \right), \quad \varepsilon \in (0,1],\, x, \xi \in \R^{n}.
	$$
	For any given $u \in \mathscr{S}'(\R^n)$ consider
	\begin{equation*}
		u_{\varepsilon}(x) = \phi^{\varepsilon}(x) \{\psi_{\varepsilon}\ast u\}(x).
	\end{equation*}
	Then $u_{\varepsilon} \in \mathscr{S}(\R^{n})$ for any $\varepsilon$, $u_{\varepsilon} \to u$ in $\mathscr{S}'(\R^{n})$ and 
	\begin{equation*}
		|\langle x \rangle^{M} \partial^{\alpha}_{x} u_{\varepsilon}(x)| \leq C_{N,M,\alpha} \varepsilon^{-n-|\alpha|-N-M}.
	\end{equation*}
\end{Th}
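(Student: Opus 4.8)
The plan is to prove the three assertions in turn, using the lemmas already established in this appendix as black boxes and reducing everything to bookkeeping.

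\textbf{Step 1.} For the membership $u_{\varepsilon}\in\mathscr{S}(\R^{n})$ and the quantitative bound I would start from the defining property of a tempered distribution: there are $C>0$ and $N\in\N_{0}$ with $|\langle u,h\rangle|\le C\sum_{|\gamma|,|\beta|\le N}\sup_{y}\langle y\rangle^{|\beta|}|\partial^{\gamma}_{y}h(y)|$ for every $h\in\mathscr{S}(\R^{n})$. Applying this with $h(y)=\varepsilon^{-|\alpha|-n}(\partial^{\alpha}\psi)\big((x-y)/\varepsilon\big)$, using $\langle y\rangle^{|\beta|}\le 2^{|\beta|}\langle x\rangle^{|\beta|}\langle x-y\rangle^{|\beta|}$ and the substitution $z=(x-y)/\varepsilon$, gives the growth estimate
$$
|\partial^{\alpha}_{x}(\psi_{\varepsilon}\ast u)(x)|\le C_{N,\alpha}\,\varepsilon^{-N-|\alpha|-n}\langle x\rangle^{N}
$$
of the first Lemma of this appendix, so $\psi_{\varepsilon}\ast u\in\mathcal{O}_{M}(\R^{n})$. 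Feeding this into Leibniz' rule for $u_{\varepsilon}=\phi^{\varepsilon}\cdot(\psi_{\varepsilon}\ast u)$, together with the observations that each $\partial^{\alpha_{1}}_{x}\phi^{\varepsilon}$ is bounded (with all seminorms of $\phi$ available) and that $\phi\in\mathscr{S}$ absorbs any power $\langle x\rangle^{M+N}$, yields $|\langle x\rangle^{M}\partial^{\alpha}_{x}u_{\varepsilon}(x)|\le C_{N,M,\alpha}\varepsilon^{-n-|\alpha|-N-M}$, which is precisely \eqref{estimate_that_we_need}; in particular $u_{\varepsilon}\in\mathscr{S}(\R^{n})$ for every $\varepsilon\in(0,1]$.

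\textbf{Step 2.} For the convergence $u_{\varepsilon}\to u$ in $\mathscr{S}'(\R^{n})$, given an arbitrary $h\in\mathscr{S}(\R^{n})$ I would move the cutoff factor onto the test function,
$$
\langle u_{\varepsilon},h\rangle=\big\langle \phi^{\varepsilon}(\psi_{\varepsilon}\ast u),\,h\big\rangle=\big\langle \psi_{\varepsilon}\ast u,\,\phi^{\varepsilon}h\big\rangle,
$$
so the task is to take the limit of the pairing of the net $\psi_{\varepsilon}\ast u$ against the net $\phi^{\varepsilon}h$. By the Remark following the $\mathscr{S}$-convergence lemma, $\psi_{\varepsilon}\ast u\to u$ in $\mathscr{S}'(\R^{n})$ (via $\langle\psi_{\varepsilon}\ast u,w\rangle=\langle u,\widetilde{\psi}_{\varepsilon}\ast w\rangle$ with $\widetilde{\psi}(\xi)=\psi(-\xi)$, which still satisfies $\widehat{\widetilde{\psi}}(0)=1$, and the lemma $\widetilde{\psi}_{\varepsilon}\ast w\to w$ in $\mathscr{S}$); and by the last lemma of the appendix, $\phi^{\varepsilon}h\to h$ in $\mathscr{S}(\R^{n})$. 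To pass to the limit in a pairing with \emph{both} arguments varying I invoke the appendix lemma on products of convergent nets: by Banach--Steinhaus the pointwise bounded family $\{\psi_{\varepsilon}\ast u\}$ is equicontinuous on $\mathscr{S}(\R^{n})$, hence $\langle\psi_{\varepsilon}\ast u,\phi^{\varepsilon}h-h\rangle\to0$, while $\langle\psi_{\varepsilon}\ast u,h\rangle\to\langle u,h\rangle$; adding these gives $\langle u_{\varepsilon},h\rangle\to\langle u,h\rangle$ for every $h$, that is $u_{\varepsilon}\to u$ in $\mathscr{S}'(\R^{n})$.

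\textbf{Main obstacle.} The genuinely nontrivial inputs are the two $\mathscr{S}$-convergence lemmas $\psi_{\varepsilon}\ast h\to h$ and $\phi^{\varepsilon}h\to h$, which I am allowed to assume but which carry the real work: one must control the \emph{uniform-in-$x$} limit of the weighted seminorms $\sup_{x}\langle x\rangle^{M}|\partial^{\alpha}_{x}(\cdot)|$, for which a mere pointwise estimate does not suffice. The device is a near/far splitting --- for $\psi_{\varepsilon}\ast h-h$ one expands the difference through a first-order Taylor remainder and splits the $y$-integral into $|y|\le\delta$ and $|y|\ge\delta$, absorbing $\langle x\rangle^{M}$ via $\langle x\rangle^{M}\le 2^{M}\langle x-\theta y\rangle^{M}\langle\theta y\rangle^{M}$ and exploiting the rapid decay of $\psi$; for $\phi^{\varepsilon}h-h$ one splits $\R^{n}$ into $|x|\le R_{\delta}$, where $\phi(\varepsilon x)\to 1$ uniformly, and $|x|\ge R_{\delta}$, where an extra factor $\langle x\rangle^{-1}$ coming from $h\in\mathscr{S}$ is small. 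Once these are in place, the theorem is exactly the assembly described in Steps 1 and 2.
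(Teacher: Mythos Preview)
Your proposal is correct and follows essentially the same path as the paper: the paper derives the quantitative estimate via the lemma on $\psi_\varepsilon\ast u\in\mathcal{O}_M$ combined with Leibniz and the Schwartz decay of $\phi$, and then proves the $\mathscr{S}'$-convergence exactly by writing $\langle u_\varepsilon,h\rangle=\langle\psi_\varepsilon\ast u,\phi^\varepsilon h\rangle$ and applying the three lemmas (Banach--Steinhaus equicontinuity, $\psi_\varepsilon\ast h\to h$ in $\mathscr{S}$, and $\phi^\varepsilon h\to h$ in $\mathscr{S}$) that you invoke. Your ``Main obstacle'' paragraph even summarises accurately the near/far splittings used in those lemmas.
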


\begin{Rem}
	Of course the above theorem still holds if we replace $\varepsilon$ by another positive scale $\omega(\varepsilon)$.
\end{Rem}

It is also of our interest to regularise curves of temperate distributions. 

\begin{Prop}
	Let $u \in C([0,T]; \mathscr{S}'(\R^{n}))$ and set
	$$
	u_{\varepsilon}(t, x) = \phi(\varepsilon x) \{\psi_{\varepsilon} \ast u(t)\}(x), \quad t \in [0,T], x\in \R^{n}.
	$$
	Then $u_{\varepsilon} \in C([0,T];\mathscr{S}(\R^{n}))$ and the following estimate holds
	\begin{equation}\label{iron_maiden}
		|\langle x \rangle^{M} \partial^{\beta}_{x} u_{\varepsilon}(t,x)| \leq C \varepsilon^{-N-M-|\beta|-n},
	\end{equation}
	where $N > 0$ and $C > 0$ do not depend on $t$. 
\end{Prop}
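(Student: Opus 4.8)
For each fixed $t$ the statement is already contained in Theorem~\ref{drunken_lullabies} and the estimate \eqref{estimate_that_we_need}: the regularisation $u_\varepsilon(t,\cdot)=\phi^\varepsilon\,(\psi_\varepsilon\ast u(t))$ lies in $\mathscr{S}(\R^n)$ and satisfies a weighted bound whose constants are controlled by one seminorm estimate for the tempered distribution $u(t)$. So the two genuinely new points are: (a) the exponent $N$ and the constant $C$ in \eqref{iron_maiden} can be chosen independently of $t$; (b) the curve $t\mapsto u_\varepsilon(t,\cdot)$ is continuous into $\mathscr{S}(\R^n)$. The engine for both is the same observation: $\{u(t):t\in[0,T]\}$ is the continuous image of a compact interval, hence a compact — in particular bounded — subset of $\mathscr{S}'(\R^n)$, so by the Banach--Steinhaus theorem it is equicontinuous. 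Consequently there exist $C>0$ and $N\in\N_0$, not depending on $t$, such that
\[
|\langle u(t),h\rangle|\le C\sum_{|\alpha|,|\gamma|\le N}\sup_{y\in\R^n}\langle y\rangle^{|\gamma|}|\partial^\alpha_y h(y)|,\qquad h\in\mathscr{S}(\R^n),\ t\in[0,T].
\]

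With this uniform seminorm bound, point (a) is immediate: one reruns the computation proving the estimate $|\partial^\alpha_x(\psi_\varepsilon\ast u)(x)|\le C_{N,\alpha}\varepsilon^{-N-|\alpha|-n}\langle x\rangle^N$ and the subsequent Leibniz argument giving \eqref{estimate_that_we_need} verbatim, since every constant there entered only through such a seminorm estimate; this yields \eqref{iron_maiden} with $N,C$ independent of $t$.

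For point (b), I would fix $t_0\in[0,T]$ and set $v(t):=u(t)-u(t_0)$, so $v(t)\to 0$ in $\mathscr{S}'(\R^n)$ as $t\to t_0$ and $u_\varepsilon(t,\cdot)-u_\varepsilon(t_0,\cdot)=\phi^\varepsilon\,(\psi_\varepsilon\ast v(t))$; note that $\{v(t)\}$ inherits a uniform seminorm bound and is equicontinuous. To estimate a generic Schwartz seminorm $\sup_x\langle x\rangle^M|\partial^\beta_x\{\phi^\varepsilon(\psi_\varepsilon\ast v(t))\}(x)|$ I split $\R^n$ into $\{|x|>R\}$ and $\{|x|\le R\}$. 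On the tail, applying \eqref{estimate_that_we_need} to the distribution $v(t)$ with $M+1$ in place of $M$ gives a bound $\le C_\varepsilon\langle x\rangle^{-1}$, which is $<\eta$ for $R$ large, uniformly in $t$. On $\{|x|\le R\}$ I expand by Leibniz: for fixed $\varepsilon$ the factors coming from $\phi^\varepsilon$ are bounded there, while the remaining factor is
\[
\partial^{\beta_2}_x(\psi_\varepsilon\ast v(t))(x)=\varepsilon^{-|\beta_2|-n}\,\big\langle v(t),\,(\partial^{\beta_2}\psi)\big(\tfrac{x-\cdot}{\varepsilon}\big)\big\rangle .
\]
The key point is that, $\varepsilon$ being fixed, the map $x\mapsto(\partial^{\beta_2}\psi)((x-\cdot)/\varepsilon)$ is continuous from $\R^n$ into $\mathscr{S}(\R^n)$ (continuity of translations and dilations in $\mathscr{S}$), so its image $K_{R,\varepsilon}$ over $\{|x|\le R\}$ is a compact subset of $\mathscr{S}(\R^n)$. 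An equicontinuous family converging pointwise on $\mathscr{S}(\R^n)$ converges uniformly on compact subsets, hence $\sup_{|x|\le R}|\langle v(t),(\partial^{\beta_2}\psi)((x-\cdot)/\varepsilon)\rangle|\to 0$ as $t\to t_0$. Combining the two regions gives the seminorm $<2\eta$ for $t$ near $t_0$, which is the desired continuity.

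The routine parts are the fixed-$t$ estimates and the Banach--Steinhaus uniformity; the main obstacle is point (b), namely upgrading the elementary pointwise-in-$t$ continuity of $t\mapsto u_\varepsilon(t,x)$ to continuity into the Fréchet space $\mathscr{S}(\R^n)$. This forces the two-region argument above: the extra power of $\langle x\rangle$ to handle $|x|\to\infty$, and — for bounded $x$ — the compactness of the family of rescaled translates of $\psi$, which is exactly what allows equicontinuity to convert pointwise convergence into the uniform convergence needed for the Schwartz seminorms.
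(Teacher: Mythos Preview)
Your proposal is correct and, for part (a), identical to the paper's proof: both invoke Banach--Steinhaus on the compact image $\{u(t):t\in[0,T]\}\subset\mathscr{S}'(\R^n)$ to get a single seminorm bound valid for all $t$, then feed this into the fixed-$t$ computation.

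For part (b) the overall shape is the same but the technical device differs. The paper sets $f_k(x)=\langle x\rangle^M\phi(\varepsilon x)\{(u(t_k)-u(t_0))\ast\psi_\varepsilon\}(x)$, observes that the uniform seminorm bound gives $\sup_{k,x}|f_k(x)|<\infty$ and (via uniform control of one more derivative) equicontinuity of $\{f_k\}$ as functions on $\R^n$, and then appeals to Ascoli--Arzel\`a together with the pointwise limit $f_k(x)\to 0$ to conclude uniform convergence. You instead work on the dual side: you split into $|x|>R$ (where the extra power of $\langle x\rangle$ gives uniform smallness) and $|x|\le R$ (where the family of test functions $x\mapsto(\partial^{\beta_2}\psi)((x-\cdot)/\varepsilon)$ is compact in $\mathscr{S}(\R^n)$, so the equicontinuous net $v(t)\to 0$ converges uniformly on it). Both arguments are valid; yours has the mild advantage of making the tail estimate explicit, whereas the paper's invocation of Ascoli--Arzel\`a ``uniformly in $\R^n$'' tacitly relies on the Schwartz decay of $\phi$ to handle $|x|\to\infty$.
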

\begin{proof}
	Banach-Steinhaus theorem implies that $\{u(t)\}_{t \in [0,T]}$ is equicontinuous. Hence, there exists an open neighborhood $V \subset \mathscr{S}(\R^{n})$ of $0$ such that 
	$$
	u(t)(V) \subset B(0,1), \quad \forall\, t \in [0,T],
	$$ 
	where $B(0,1) = \{z \in \C: |z| < 1\}$. Since $V$ is open, from the topology of $\mathscr{S}(\R^{n})$, we find $N > 0$ and $r > 0$ such that
	$$
	\{h \in \mathscr{S}(\R^{n}): \rho_N(h) < r\} \subset V,
	$$
	where 
	$$
	\rho_N(h) = \sum_{|\beta|, M \leq N} \sup_{x \in \R^n} \langle x \rangle^{M}|\partial^{\beta}_{x}h(x)|.
	$$
	Hence for any $t \in [0,T]$ and any $h \in \mathscr{S}(\R^{n})$ we get
	$$
	|u(t)(h)| = \frac{2\rho_N(h)}{r} \left| u(t) \left( \frac{rh}{2\rho_N(h)} \right) \right| \leq \frac{2}{r} \rho_N(h).
	$$
	This means that we can find $C, N > 0$ independent of $t \in [0,T]$ such that  
	\begin{equation}\label{i_want_conquer_the_world}
		|u(t)(h)| \leq C \sum_{|\beta|, M \leq N} \sup_{x \in \R^{n}} \langle x \rangle^{M}|\partial^{\beta}_{x}h(x)|, \quad h \in \mathscr{S}(\R^{n}).
	\end{equation}
	From \eqref{i_want_conquer_the_world} we immediately get \eqref{iron_maiden}. 
	
	Now let us prove that $u_{\varepsilon}$ is continuous in $t$. Let $t_{k} \to t_0$ in $[0,T]$. It suffices to prove that (for any fixed $\varepsilon$)
	$$
	\sup_{x\in\R^n} \langle x \rangle^{M} |\phi(\varepsilon x) (\{u(t_k) - u(t_0)\} \ast \psi_\varepsilon)(x)| \to 0, \,\, \text{as} \,\, k \to \infty.
	$$ 
	Denote by $f_k (x) = \langle x \rangle^{M} \phi(\varepsilon x) (\{u(t_k)-u(t_0)\} \ast \psi_\varepsilon)(x)$. Since $\phi \in \mathscr{S}(\R^n)$, and from \eqref{i_want_conquer_the_world} we have 
	$$
	|\{u(t_k)-u(t_0)\} \ast \psi_{\varepsilon} (x)| \leq C \varepsilon^{-N-n} \langle x \rangle^{N}, 
	$$
	we conclude that 
	$$
	\sup_{k,x} |f_k(x)| < + \infty. 
	$$On the other hand, the equicontinuity of $\{u(t)\}_{t}$ implies equicontinuity of $\{f_k\}_{k}$. Since $f_k \to 0$ pointwise, applying Ascoli-Arzel\`a theorem we get that (passing to a subsequence if necessary) $f_k \to 0$ uniformly in $\R^{n}$. We can conclude that $u_{\varepsilon}(t_k) \to u(t_0)$ in $\mathscr{S}(\R^{n})$ for any fixed $\varepsilon$.
\end{proof}

\smallskip


\addcontentsline{toc}{section}{Appendix}
\renewcommand{\thesection}{B}
\section{Pseudodifferential operators}\label{pseudodifferential_operators}
\

This appendix is devoted to report some results and definitions concerning pseudodifferential operators that were employed throughout the paper. For the proofs we address the reader to \cite{Kumano-Go}.

\begin{Def}
	For a given $m\in\R$, $S^{m}(\R^{2n})$ denotes the space of all smooth functions $p(x,\xi) \in C^{\infty}(\R^{2n})$ such that for any $\alpha, \beta \in \N^{n}_{0}$ there exists a positive constant $C_{\alpha,\beta}$ for which
	$$
	|\partial^{\alpha}_{\xi} \partial^{\beta}_{x} p(x,\xi)| \leq C_{\alpha,\beta} \langle \xi \rangle^{m-|\alpha|}.
	$$
	The Fr\'echet topology of the space $S^{m}(\R^{2n})$ is induced by the following family of seminorms
	$$
	|p|^{(m)}_\ell := \max_{|\alpha| \leq \ell, |\beta| \leq \ell} \sup_{x, \xi \in \R^{2n}} |\partial^{\alpha}_{\xi} \partial^{\beta}_{x} p(x,\xi)| \langle \xi \rangle^{-m + |\alpha|},\quad\ell \in \N_0.
	$$
\end{Def}


Given a symbol $p(x,\xi)$  we associate to it the operator
$$
p(x,D) u(x) = \int e^{i\xi x} p(x,\xi) \widehat{u}(\xi) \dslash\xi, \quad u \in \mathscr{S}(\R^{n}),
$$
which maps continuously  $\mathscr{S}(\R^{n})$ into itself.

We will sometimes also use the notation $p(x,D) = {\rm op}(p(x,\xi))$. The next result concerns the action of such operator on Sobolev spaces.

\begin{Th}\label{theorem_Calderon_Vaillancourt}[Calder\'on-Vaillancourt]
	Let $p \in S^{m}(\R^{2n})$. Then for any real number $s \in \R$ there exist $\ell := \ell(s,m) \in \N_0$ and $C:= C_{s,m} > 0$ such that 
	\begin{equation*}
		\| p(x,D)u \|_{H^{s}} \leq C |p|^{(m)}_{\ell} \| u \|_{H^{s+m}}, \quad \forall \, u \in H^{s+m}.
	\end{equation*}
	Moreover, when $m = s = 0$ we can replace $|p|^{(m)}_{\ell}$ by
	\begin{equation*}
		\max_{|\alpha| \leq \ell_1, |\beta| \leq \ell_2} \sup_{x, \xi \in \R^{n}} |\partial^{\alpha}_{\xi} \partial^{\beta}_{x} p(x,\xi)| \langle \xi \rangle^{|\alpha| },
	\end{equation*}
	where 
	$$
	\ell_1 = 2\left\lfloor\frac{n}{2} +1\right\rfloor, \quad \ell_2 = 2\left\lfloor \frac{n}{2} + 1 \right\rfloor.
	$$
\end{Th}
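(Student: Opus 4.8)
The plan is to reduce the general statement to the special case $m=s=0$, then to prove $L^{2}$--boundedness for symbols of order zero, and finally to obtain the sharp derivative count as a refinement of the latter.

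First I would conjugate by powers of $\langle D\rangle$. Since $\langle\xi\rangle^{\sigma}\in S^{\sigma}(\R^{2n})$, the composition formula for pseudodifferential operators (with its remainder estimates, see \cite{Kumano-Go}) shows that $q(x,D):=\langle D\rangle^{s}\circ p(x,D)\circ\langle D\rangle^{-s-m}$ has a symbol $q\in S^{0}(\R^{2n})$ satisfying $|q|^{(0)}_{k}\leq C_{k,s,m,n}\,|p|^{(m)}_{k+\ell'}$ for a fixed $\ell'=\ell'(s,m,n)$ and every $k$. As $\langle D\rangle^{\sigma}\colon H^{\tau}\to H^{\tau-\sigma}$ is an isometry, for $u\in H^{s+m}$ one gets $\|p(x,D)u\|_{H^{s}}=\|q(x,D)\langle D\rangle^{s+m}u\|_{L^{2}}\leq\|q(x,D)\|_{L^{2}\to L^{2}}\|u\|_{H^{s+m}}$, so everything reduces to the claim that every $q\in S^{0}(\R^{2n})$ is $L^{2}$--bounded with norm controlled by a single seminorm $|q|^{(0)}_{\ell}$.

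For that $L^{2}$ estimate I would use a Littlewood--Paley decomposition $1=\sum_{k\geq0}\chi_{k}(\xi)$, with $\chi_{k}$ supported in $\{|\xi|\sim 2^{k}\}$ for $k\geq1$ and $\chi_{0}$ near the origin, and write $q=\sum_{k}q_{k}$, $q_{k}=\chi_{k}q$, $T_{k}=q_{k}(x,D)$. After the dyadic rescaling $\xi=2^{k}\eta$, implemented by a unitary dilation on $L^{2}$, each $q_{k}$ becomes a symbol supported in a fixed annulus in $\eta$ with all derivatives bounded by seminorms of $q$; for such a symbol the Schwartz kernel is rapidly decaying off the diagonal, uniformly in the base point, by repeated integration by parts in $\eta$, and Schur's test yields $\sup_{k}\|T_{k}\|_{L^{2}\to L^{2}}\leq C\,|q|^{(0)}_{\ell}$. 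To assemble the pieces I would invoke the Cotlar--Stein almost orthogonality lemma: using the composition calculus for $T_{j}^{*}T_{k}$ and $T_{j}T_{k}^{*}$ together with the separation of the frequency supports of $q_{j}$ and $q_{k}$, one obtains for every $N$ a bound $\|T_{j}^{*}T_{k}\|+\|T_{j}T_{k}^{*}\|\leq C\,|q|^{(0)}_{\ell(N)}\,(1+|j-k|)^{-N}$; choosing $N$ large makes $\sum_{k}(1+|k|)^{-N/2}$ finite, and Cotlar--Stein then gives $\|q(x,D)\|_{L^{2}\to L^{2}}\leq C\,|q|^{(0)}_{\ell}$. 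Combined with the first step this proves the inequality with some $\ell=\ell(s,m)$.

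The hard part will be the refined statement at $m=s=0$, namely the explicit count $\ell_{1}=\ell_{2}=2\lfloor n/2+1\rfloor$. This forces one to run the scheme above while carefully bookkeeping the number of derivatives used: the crude Schur bound for a single dyadic piece costs of order $n+1$ frequency derivatives, and to lower this to $\lfloor n/2\rfloor+1$ one must replace $L^{1}$ kernel estimates by $L^{2}$/Plancherel estimates, controlling $\|T_{k}\|$ through $\|T_{k}^{*}T_{k}\|$ and using the Sobolev embedding $H^{\lfloor n/2\rfloor+1}\hookrightarrow L^{\infty}$ to pass from square--integrable to pointwise control of the symbols involved; the almost orthogonality estimates are sharpened in the same way, and one must check that composing the dyadic pieces entails no loss of derivatives. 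This optimal bookkeeping is the only genuinely delicate point; the remaining ingredients are standard oscillatory--integral manipulations, for which we refer to \cite{Kumano-Go}.
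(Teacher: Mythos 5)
A preliminary remark: the paper does not prove this statement. Appendix B is explicitly a survey of known facts (``For the proofs we address the reader to \cite{Kumano-Go}''), so there is no in-paper argument to compare yours against; the theorem is the classical Calder\'on--Vaillancourt $L^2$-boundedness result quoted from Kumano-Go's book. Judged on its own terms, the first two stages of your sketch are a correct and standard route to the qualitative part of the statement: conjugation by $\langle D\rangle^{s}$ and $\langle D\rangle^{-s-m}$, together with the composition calculus and its seminorm estimates, reduces everything to $L^2$-boundedness of an order-zero symbol, and the dyadic decomposition plus Cotlar--Stein does the rest. (Two small points there: in the Kohn--Nirenberg quantisation $T_jT_k^{*}$ actually vanishes for $|j-k|\geq 2$, since $T_k^{*}$ outputs frequencies in $\mathrm{supp}\,\chi_k$, so only $T_j^{*}T_k$ needs the integration-by-parts gain of $2^{-\max(j,k)}$ per $x$-derivative; and the resulting bound is quadratic in the seminorm, $\|T_j^{*}T_k\|\leq C\bigl(|q|^{(0)}_{\ell}\bigr)^{2}(1+|j-k|)^{-N}$, which is harmless.)

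The genuine gap is in your last paragraph. The explicit count $\ell_1=\ell_2=2\lfloor n/2+1\rfloor$ is the whole content of the ``moreover'' clause, and your proposal asserts rather than proves it: ``careful bookkeeping'' and ``replace $L^1$ by $L^2$ estimates'' is not a mechanism that lands on this particular number, and the dyadic route is poorly adapted to producing it, because the rescaling $\xi=2^{k}\eta$ weights $x$- and $\xi$-derivatives anisotropically while the claimed count is symmetric in $\alpha$ and $\beta$. The proofs that actually achieve $2\lfloor n/2+1\rfloor$ (Cordes, Coifman--Meyer, Kumano-Go Ch.~7) proceed differently: one estimates the kernel directly via the identity $e^{i(x-y)\xi}=\langle x-y\rangle^{-2l}(1-\Delta_\xi)^{l}e^{i(x-y)\xi}$ (and its dual in the $x$-variable), with $l=\lfloor n/2\rfloor+1$ chosen as the least integer with $2l>n$ so that $\langle\cdot\rangle^{-2l}\in L^{1}(\R^{n})$ and a Schur or Cotlar--Stein bound over a unit-cube partition of phase space closes; the number $2\lfloor n/2+1\rfloor=2l$ is exactly the order of the differential operator $(1-\Delta)^{l}$ applied to the symbol in each group of variables. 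If you want the sharp clause you should run one of these arguments (or cite it, as the paper does); as written, your third paragraph is a promissory note rather than a proof.
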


In the following we consider the algebra properties of $S^{m}(\R^{2n})$ with respect to the composition of operators. In the sequel $Os-$ in front of the integral sign stands for oscillatory integral. Let $p_j \in S^{m_j}(\R^{n})$, $j = 1, 2$, and define 
\begin{align}\label{eq_symbol_of_composition}
	q(x,\xi) &= Os- \iint e^{-iy \eta} p_1(x,\xi+\eta)p_2(x+y,\xi) dy \dslash\eta \\
	&= \lim_{\mu \to 0} \iint e^{-iy\eta} p_1(x,\xi+\eta)p_2(x+y,\xi) e^{-\mu^2|y|^2} e^{-\mu^2|\eta|^2} dy \dslash\eta. \nonumber
\end{align} 

\begin{Th}\label{Hotel_california}
	Let $p_j \in S^{m_j}(\R^{2n})$, $j = 1, 2$, and consider $q$ defined by \eqref{eq_symbol_of_composition}. Then $q \in S^{m_1+m_2}(\R^{2n})$ and $q(x,D) = p_1(x,D) p_2(x,D)$. Moreover, the symbol $q$ has the following asymptotic expansion
	\begin{align*}
		q(x,\xi) = \sum_{|\alpha| < N} \frac{1}{\alpha!} \partial^{\alpha}_{\xi}p_{1}(x,\xi)D^{\alpha}_{x}p_2(x,\xi) + r_N(x,\xi), 
	\end{align*} 
	where 
	$$
	r_N(x,\xi) = \sum_{|\gamma| = N} \frac{N}{\gamma!} \int_{0}^{1} (1-\theta)^{N-1} \, Os - \iint e^{-iy\eta} \partial^{\gamma}_{\xi} p_1(x,\xi+\theta\eta) D^{\gamma}_{x} p_2(x+y,\xi)  dy\dslash\eta \, d\theta,
	$$
	and the seminorms of $r_N$ may be estimated in the following way: for any $\ell_{0} \in \N_0$ there exists $C_{\ell, N, n} > 0$ such that 
	$$
	|r_N|^{(m_1+m_2)}_{\ell_0} \leq C_{\ell,N,n} |p_1|^{(m_1)}_{\ell_0+N+n+1} |p_2|^{(m_2)}_{\ell_0+N+n+1}.
	$$
\end{Th}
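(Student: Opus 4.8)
The four assertions---existence of the defining oscillatory integral, the composition identity $q(x,D)=p_1(x,D)p_2(x,D)$, the asymptotic expansion, and the remainder bound---constitute the classical content of the symbol calculus (see \cite{Kumano-Go}), and the plan is to establish them in that order.

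First I would make sense of the integral defining $q$. For fixed $(x,\xi)$ the amplitude $a(y,\eta):=p_1(x,\xi+\eta)p_2(x+y,\xi)$ and all its $(y,\eta)$-derivatives grow at most polynomially, so $a$ lies in the standard amplitude class for which $Os-\iint e^{-iy\eta}a(y,\eta)\,dy\dslash\eta := \lim_{\mu\to0}\iint e^{-iy\eta}a(y,\eta)e^{-\mu^2|y|^2-\mu^2|\eta|^2}\,dy\dslash\eta$ exists and does not depend on the regularising cut-off. The mechanism is to write, for $M,M'\in\N_0$,
$$
e^{-iy\eta}=\langle\eta\rangle^{-2M}(1-\Delta_y)^{M}e^{-iy\eta}=\langle y\rangle^{-2M'}(1-\Delta_\eta)^{M'}e^{-iy\eta},
$$
integrate by parts in $y$ and in $\eta$, and choose $M,M'$ large enough (depending on $m_1,m_2,n$) that the resulting integrand is absolutely integrable; this simultaneously defines $q(x,\xi)$ and gives a first crude bound.

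Next I would verify $q(x,D)=p_1(x,D)p_2(x,D)$ on $u\in\mathscr{S}(\R^{n})$ by writing out $p_1(x,D)\bigl(p_2(x,D)u\bigr)$ as an iterated Fourier integral, exchanging the order of integration (legitimised by the oscillatory-integral formalism), and translating the integration variables $y$ and $\eta$, which produces exactly $\int e^{ix\xi}q(x,\xi)\widehat u(\xi)\,\dslash\xi$. Then, for the asymptotic expansion, I would Taylor-expand $p_1(x,\xi+\eta)$ in $\eta$ about $0$ to order $N$ with integral remainder, insert this into the oscillatory integral, and use $\eta^{\alpha}e^{-iy\eta}=(-D_y)^{\alpha}e^{-iy\eta}$ together with integration by parts in $y$; each principal monomial collapses via the identity $Os-\iint e^{-iy\eta}g(x+y)\,dy\dslash\eta=g(x)$ (valid for $g$ bounded with bounded derivatives, hence in particular for $g=D^{\alpha}_{x}p_2(\cdot,\xi)$) to $\frac{1}{\alpha!}\partial^{\alpha}_{\xi}p_1(x,\xi)D^{\alpha}_{x}p_2(x,\xi)$, while the remainder terms reorganise into the stated formula for $r_N$.

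The main obstacle is the seminorm estimate for $r_N$, from which $q\in S^{m_1+m_2}(\R^{2n})$ follows (the principal terms $\partial^{\alpha}_{\xi}p_1\,D^{\alpha}_{x}p_2$ being visibly in $S^{m_1+m_2-|\alpha|}\subseteq S^{m_1+m_2}$). After differentiating $r_N$ in $(x,\xi)$ and applying Leibniz's rule under the integral sign, for each resulting oscillatory integral one again inserts sufficiently many factors $\langle\eta\rangle^{-2M}(1-\Delta_y)^{M}$ and $\langle y\rangle^{-2M'}(1-\Delta_\eta)^{M'}$ to secure absolute convergence, uses Peetre's inequality $\langle\xi+\theta\eta\rangle^{m_1-N}\le C\langle\xi\rangle^{m_1-N}\langle\eta\rangle^{|m_1-N|}$ uniformly in $\theta\in[0,1]$ to trade the shifted weight for $\langle\xi\rangle$, and bounds the $x$- and $\xi$-derivatives of $p_1$ and $p_2$ by their seminorms. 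The delicate point is purely the bookkeeping of how many derivatives get consumed---the $N$ from the Taylor remainder, the roughly $n+1$ extra ones needed for convergence of the $(y,\eta)$-integral, and the $\ell_0$ from differentiating $q$---so as to land precisely on $|r_N|^{(m_1+m_2)}_{\ell_0}\le C_{\ell,N,n}|p_1|^{(m_1)}_{\ell_0+N+n+1}|p_2|^{(m_2)}_{\ell_0+N+n+1}$; the $\theta$-integral contributes only a harmless constant. Since $r_N\in S^{m_1+m_2}(\R^{2n})$ for every $N\geq 1$, the proof concludes.
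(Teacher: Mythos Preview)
Your proposal is correct and follows the standard approach. Note, however, that the paper does not actually give a proof of this theorem: it is stated in Appendix~B as a background result, with the proof delegated to \cite{Kumano-Go} (see the opening line of the appendix, ``For the proofs we address the reader to \cite{Kumano-Go}''). Your sketch is precisely the classical argument one finds there---regularise the oscillatory integral via $\langle\eta\rangle^{-2M}(1-\Delta_y)^M$ and $\langle y\rangle^{-2M'}(1-\Delta_\eta)^{M'}$, verify the composition formula on $\mathscr{S}(\R^n)$, Taylor-expand $p_1(x,\xi+\eta)$ in $\eta$ and integrate by parts to extract the principal terms, then estimate the remainder with Peetre's inequality and a derivative count---so there is nothing to compare. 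Your bookkeeping of the indices ($N$ from the Taylor remainder, roughly $n+1$ for integrability, $\ell_0$ from differentiating) leading to $\ell_0+N+n+1$ is exactly right.
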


The last theorem that we recall is the celebrated sharp G{\aa}rding inequality (see for instance Theorem 2.1.3 in \cite{Kenig_SG}).

\begin{Th}\label{Theorem_sharp_garding}
	Let $p \in S^{1}(\R^{2n})$ and suppose $Re\,p(x,\xi) \geq 0$ for all $x \in \R^n$ and $|\xi| \geq R$ for some $R>0$. Then there exist $k = k(n) \in \N_0$ and $C = C(n,R)$ such that 
	$$
	Re\, \langle p(x,D) u, u \rangle_{L^{2}} \geq -C |p|^{(1)}_{k} \|u\|^{2}_{L^{2}}, \quad u \in \mathscr{S}(\R^{n}).
	$$ 
\end{Th}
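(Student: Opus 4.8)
The plan is to prove Theorem~\ref{Theorem_sharp_garding} by Friedrichs symmetrisation: one compares $p(x,D)$ with a manifestly nonnegative operator built as a continuous superposition of rank-one projections onto suitably rescaled Gaussian wave packets, and then estimates the difference by the pseudodifferential calculus of Appendix~B.

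\emph{Step 1: reduction to a real, globally nonnegative symbol.} Since $\Re\langle p(x,D)u,u\rangle_{L^2}=\langle\tfrac12(p(x,D)+p(x,D)^{*})u,u\rangle_{L^2}$, and since by the composition and adjoint rules behind Theorem~\ref{Hotel_california} the operator $\tfrac12(p(x,D)+p(x,D)^{*})$ has symbol $\Re p(x,\xi)+r_{0}(x,\xi)$ with $r_{0}\in S^{0}(\R^{2n})$ and $|r_{0}|^{(0)}_{\ell}\le C_{\ell,n}\,|p|^{(1)}_{\ell+J}$, Calder\'on--Vaillancourt (Theorem~\ref{theorem_Calderon_Vaillancourt}) reduces the claim to: for any \emph{real} $q\in S^{1}(\R^{2n})$ with $q\ge 0$ on $\{|\xi|\ge R\}$ one has $\langle q(x,D)u,u\rangle_{L^2}\ge -C\,|q|^{(1)}_{k}\,\|u\|^{2}_{L^2}$. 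Then, fixing $\chi\in C^{\infty}_{c}(\R^{n})$ with $\chi\equiv1$ on $\{|\xi|\le R\}$ and $\supp\chi\subset\{|\xi|\le 2R\}$, write $q=q\chi+q(1-\chi)$: the first summand lies in $S^{-\infty}\subset S^{0}$ with $|q\chi|^{(0)}_{\ell}\le C_{R,\ell}\,|q|^{(1)}_{\ell}$, hence contributes a term bounded by the right-hand side via Theorem~\ref{theorem_Calderon_Vaillancourt}; the second summand $\tilde q:=q(1-\chi)$ is real, belongs to $S^{1}$ with $|\tilde q|^{(1)}_{\ell}\le C_{R,\ell}\,|q|^{(1)}_{\ell}$, and is nonnegative on all of $\R^{2n}$. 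So it suffices to treat $\tilde q$.

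\emph{Steps 2--3: the symmetriser and its remainder.} Pick $\theta\in\mathscr S(\R^{n})$ real with $\|\theta\|_{L^2}=1$ and, for $(y,\eta)\in\R^{2n}$, set the wave packet $\theta_{y,\eta}(x)=\langle\eta\rangle^{n/4}\,\theta\!\big(\langle\eta\rangle^{1/2}(x-y)\big)\,e^{ix\cdot\eta}$, which has $\|\theta_{y,\eta}\|_{L^2}=1$, spatial width $\langle\eta\rangle^{-1/2}$ and frequency width $\langle\eta\rangle^{1/2}$. Defining $\tilde q^{\,F}(x,D)u:=(2\pi)^{-n}\!\int_{\R^{2n}}\tilde q(y,\eta)\,\langle u,\theta_{y,\eta}\rangle_{L^2}\,\theta_{y,\eta}\,dy\,d\eta$, the positivity is immediate: $\langle\tilde q^{\,F}(x,D)u,u\rangle_{L^2}=(2\pi)^{-n}\!\int\tilde q(y,\eta)\,|\langle u,\theta_{y,\eta}\rangle|^{2}\,dy\,d\eta\ge 0$. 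The key point is then to identify the Kohn--Nirenberg symbol of $\tilde q^{\,F}(x,D)$: writing it as an oscillatory integral and Taylor expanding $\tilde q(y,\eta)$ around $(x,\xi)$, the zeroth order term reproduces $\tilde q(x,\xi)$ (up to the elliptic positive factor $\mathrm{Op}^{F}(1)$, which is $I+\mathrm{op}(S^{-1})$ for an appropriate choice of $\theta$ and can be absorbed), the first order term vanishes by symmetry, and the second order term involves precisely $\langle\xi\rangle^{-1}\partial^{2}_{x}\tilde q$ and $\langle\xi\rangle\,\partial^{2}_{\xi}\tilde q$ — both in $S^{0}$ with seminorms controlled by $|\tilde q|^{(1)}_{\ell}$, thanks to the $\langle\eta\rangle^{\mp1/2}$ scaling of the packets. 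Iterating (equivalently, using the remainder estimates in Theorem~\ref{Hotel_california}) yields $\tilde q^{\,F}(x,D)=\tilde q(x,D)+E$ with $E\in\mathrm{op}(S^{0})$ and $\|E\|_{L^2\to L^2}\le C_{n}\,|\tilde q|^{(1)}_{k}$, $k=k(n)$. Hence $\langle\tilde q(x,D)u,u\rangle=\langle\tilde q^{\,F}(x,D)u,u\rangle-\langle Eu,u\rangle\ge -C_{n}\,|\tilde q|^{(1)}_{k}\,\|u\|^{2}_{L^2}$, and tracing back through Step~1 gives the theorem with $k=k(n)$ and $C=C(n,R)$.

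\emph{Main obstacle.} The hard part is the last step: proving that $\tilde q^{\,F}(x,D)-\tilde q(x,D)$ has order $0$ rather than the naive order $1$, with an explicit bound on its $L^{2}$-operator norm by a single seminorm $|\tilde q|^{(1)}_{k}$. This is exactly where the anisotropic scaling $\langle\eta\rangle^{1/2}$ of the wave packets is essential to extract the one-order gain, and where the oscillatory-integral estimates must be executed while keeping explicit track of how many derivatives of the symbol enter, so that $k$ ends up depending only on $n$. The normalisation of $\{\theta_{y,\eta}\}$ so that $\mathrm{Op}^{F}(1)$ is an elliptic positive operator (or a direct resolution of the identity) is a routine but mildly delicate companion point.
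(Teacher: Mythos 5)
The paper does not actually prove this statement: Theorem \ref{Theorem_sharp_garding} is recalled in Appendix B as the ``celebrated sharp G{\aa}rding inequality'' with a pointer to Theorem 2.1.3 of \cite{Kenig_SG}, and is then used as a black box in Section \ref{Iron_maiden}. So there is no in-paper argument to measure yours against; what you have written is the classical Friedrichs symmetrisation proof of the cited result. Your Step 1 is complete and correct: passing to $\tfrac12(p(x,D)+p(x,D)^{*})$, whose symbol is $\Re p$ modulo $S^{0}$ with controlled seminorms, discarding the low-frequency piece $q\chi$ by Calder\'on--Vaillancourt (this is where the $R$-dependence of $C$ enters), and reducing to a real symbol $\tilde q\in S^{1}$ that is nonnegative on all of $\R^{2n}$. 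Steps 2--3 correctly identify the mechanism: the Friedrichs/anti-Wick quantization built from wave packets at the anisotropic scale $\langle\eta\rangle^{1/2}$ is manifestly nonnegative, and the second-order Taylor terms $\langle\xi\rangle^{-1}\partial_{x}^{2}\tilde q$ and $\langle\xi\rangle\,\partial_{\xi}^{2}\tilde q$ land in $S^{0}$, which is precisely the one-order gain the theorem requires.

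That said, as you concede yourself, the entire content of the theorem is concentrated in the step you defer: the proof that $\tilde q^{\,F}(x,D)-\tilde q(x,D)$ is $L^{2}$-bounded with norm $\leq C_{n}\,|\tilde q|^{(1)}_{k}$ for $k=k(n)$. In your write-up this is described rather than established; note in particular that Theorem \ref{Hotel_california} as stated governs the composition of two Kohn--Nirenberg quantized operators and does not directly apply to the oscillatory integral defining the symbol of $\tilde q^{\,F}(x,D)$, so the ``iteration'' must be carried out by hand with its own remainder estimates. Two smaller points also need care: the vanishing of the first-order Taylor term requires $\theta$ to be chosen even (so that the first moments of $|\theta|^{2}$ and $|\widehat\theta|^{2}$ vanish), not merely real; and because the packet scale depends on $\eta$, the family $\{\theta_{y,\eta}\}$ is not an exact resolution of the identity, so ``absorbing'' the factor coming from the Friedrichs quantization of the constant symbol $1$ requires a genuine argument rather than a parenthetical remark. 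None of this is a wrong turn --- the route is the textbook one and it does work --- but as written the proposal is a correct and honestly-labelled roadmap rather than a complete proof, whereas the paper sidesteps the issue entirely by citation.
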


\smallskip


\section*{Acknowledgments}
\

The first author was supported by the grant  $2022/01712-3$ from S\~ao Paulo Research Foundation (FAPESP). 
The second and third author were supported by the INdAM-GNAMPA projects CUP E53C22001930001 and CUP E53C23001670001. The fourth author was supported by the
EPSRC grant EP/V005529/2.

\smallskip



\end{document}